\def\dim{\mathop{\hbox {dim}}\nolimits}
\def\End{\mathop{\hbox {End}}\nolimits}
\newcommand{\pf}{\begin{proof}}
	\newcommand{\epf}{\end{proof}}
\newcommand{\eq}{\begin{equation}}
	\newcommand{\eeq}{\end{equation}}
\newcommand{\eqn}{\begin{equation*}}
	\newcommand{\eeqn}{\end{equation*}}
\newcommand{\frg}{\mathfrak{g}}
\newcommand{\frh}{\mathfrak{h}}
\newcommand{\frsl}{\mathfrak{sl}}
\newcommand{\tr}{\operatorname{tr}}
\newtheorem{thm}[equation]{Theorem}
\newtheorem{cor}[equation]{Corollary}
\newtheorem{lemma}[equation]{Lemma}
\newtheorem{prop}[equation]{Proposition}
\newtheorem{defi}[equation]{Definition}
\numberwithin{equation}{section}
\let\ssize\scriptstyle
\newif\ifFIRST\newdimen\MAXright\MAXright0pt
\def\sdynkin{\bgroup\eightpoint\dynkin}
\def\endsdynkin{\enddynkin\egroup}
\def\dynkin{\bgroup\FIRSTtrue\hskip.5em\setbox1\hbox{$\diagup$}%
	\setbox2\hbox{$\diagdown$}%
	\setbox0\hbox to2\wd1{\hrulefill}%
	\setbox3\hbox{$\bullet$}%
	\setbox4\hbox{$\times$}%
	\setbox7\hbox{$\circ$}
	\def\whiteroot##1{\ifFIRST\setbox5\hbox{$##1$}\ifdim\wd5>1.3em
		\hskip-.5em\hskip.5\wd5\fi\fi\FIRSTfalse
		\hskip-.25em\raise1.5\wd3\hbox to0pt{\hss\hskip.45em$
			\ssize##1$\hss}\copy7\hskip-.25em\setbox6\hbox{$##1$}
		\MAXright\wd6}
	\def\root##1{\ifFIRST\setbox5\hbox{$##1$}\ifdim\wd5>1.3em%
		\hskip-.5em\hskip.5\wd5\fi\fi\FIRSTfalse%
		\hskip-.25em\raise1.5\wd3\hbox to0pt{\hss\hskip.45em$%
			\ssize##1$\hss}\copy3\hskip-.25em\setbox6\hbox{$##1$}%
		\MAXright\wd6}%
	\def\whitedroot##1{\ifFIRST\setbox5\hbox{$##1$}\ifdim\wd5>1.3em
		\hskip-.5em\hskip.5\wd5\fi\fi\FIRSTfalse
		\hskip-.25em\lower1.8\wd3\hbox to0pt{\hss\hskip.45em$
			\ssize##1$\hss}\copy7\hskip-.25em\setbox6\hbox{$##1$}
		\MAXright\wd6}%
	\def\whiterroot##1{\hskip-.25em\copy7\hbox to0pt{\hskip.3em$\ssize##1$\hss}%
		\hskip-.25em\setbox6\hbox{\hskip.6em$##1##1$}%
		\MAXright\wd6}%
	\def\droot##1{\ifFIRST\setbox5\hbox{$##1$}\ifdim\wd5>1.3em%
		\hskip-.5em\hskip.5\wd5\fi\fi\FIRSTfalse%
		\hskip-.25em\lower1.8\wd3\hbox to0pt{\hss\hskip.45em$%
			\ssize##1$\hss}\copy3\hskip-.25em\setbox6\hbox{$##1$}%
		\MAXright\wd6}%
	\def\rroot##1{\hskip-.25em\copy3\hbox to0pt{\hskip.3em$\ssize##1$\hss}%
		\hskip-.25em\setbox6\hbox{\hskip.6em$##1##1$}%
		\MAXright\wd6}%
	\def\norroot##1{\hskip-.36em\copy4\hbox to0pt{\hskip.3em$\ssize##1$\hss}%
		\hskip-.48em\setbox6\hbox{\hskip.6em$##1##1$}%
		\MAXright\wd6}%
	\def\noroot##1{\ifFIRST\setbox5\hbox{$##1$}\ifdim\wd5>1.3em%
		\hskip-.5em\hskip.5\wd5\fi\fi\FIRSTfalse%
		\hskip-.36em\raise1.5\wd3\hbox to0pt{\hss\hskip.6em$%
			\ssize##1$\hss}\copy4\hskip-.38em\setbox6\hbox{$##1$}%
		\MAXright\wd6}%
	\def\nodroot##1{\ifFIRST\setbox5\hbox{$##1$}\ifdim\wd5>1.3em%
		\hskip-.5em\hskip.5\wd5\fi\fi\FIRSTfalse%
		\hskip-.36em\lower1.8\wd3\hbox to0pt{\hss\hskip.6em$%
			\ssize##1$\hss}\copy4\hskip-.38em\setbox6\hbox{$##1$}%
		\MAXright\wd6}%
	\def\nolink{\hskip\wd0}
	\def\link{\raise.22em\copy0}%
	\def\llink##1{\raise.32em\copy0\hskip-\wd0%
		\raise.12em\copy0\hskip-.5\wd0\hbox to0pt{\hss$##1$\hss}\hskip.5\wd0}%
	\def\lllink##1{\raise.22em\copy0\hskip-\wd0\raise.32em\copy0\hskip-\wd0%
		\raise.12em\copy0\hskip-.5\wd0\hbox to0pt{\hss$##1$\hss}\hskip.5\wd0}%
	\def\rootupright##1{\hbox to0pt{\raise.45em\copy1\hskip-.25em\raise1.3\ht1%
			\hbox{\copy3\hskip.3em$\ssize##1$}\hss}%
		\setbox6\hbox{\hskip.6em\copy1\copy1$##1##1$}%
		\ifdim\MAXright<\wd6\MAXright\wd6\fi}%
	\def\whiterootupright##1{\hbox to0pt{\raise.45em\copy1\hskip-.25em\raise1.3\ht1
			\hbox{\copy7\hskip.3em$\ssize##1$}\hss}
		\setbox6\hbox{\hskip.6em\copy1\copy1$##1##1$}
		\ifdim\MAXright<\wd6\MAXright\wd6\fi}
	\def\norootupright##1{\hbox to0pt{\raise.45em\copy1\hskip-.36em\raise1.3\ht1%
			\hbox{\copy4\hskip.3em$\ssize##1$}\hss}%
		\setbox6\hbox{\hskip.6em\copy1\copy1$##1##1$}%
		\ifdim\MAXright<\wd6\MAXright\wd6\fi}%
	\def\rootdownright##1{\hbox to0pt{\raise-.5em\copy2\hskip-.25em\raise-1.35\ht1%
			\hbox{\copy3\hskip.3em$\ssize##1$}\hss}\setbox6%
		\hbox{\hskip.6em\copy2\copy2$##1##1$}%
		\ifdim\MAXright<\wd6\MAXright\wd6\fi}%
	\def\whiterootdownright##1{\hbox to0pt{\raise-.5em\copy2\hskip-.25em\raise-1.35\ht1
			\hbox{\copy7\hskip.3em$\ssize##1$}\hss}\setbox6
		\hbox{\hskip.6em\copy2\copy2$##1##1$}
		\ifdim\MAXright<\wd6\MAXright\wd6\fi}
	\def\rootdown##1{\hbox to0pt{\hskip-.05em\vrule height.25em depth.65em%
			\hskip-.25em\raise-.95em\hbox{\copy3\hskip.3em$\ssize##1$}\hss}%
		\setbox6\hbox{$##1$}%
		\ifdim\MAXright<\wd6\MAXright\wd6\fi}%
	\def\whiterootdown##1{\hbox to0pt{\hskip-.05em\vrule height.25em depth.65em
			\hskip-.25em\raise-.95em\hbox{\copy7\hskip.3em$\ssize##1$}\hss}
		\setbox6\hbox{$##1$}
		\ifdim\MAXright<\wd6\MAXright\wd6\fi}
	\def\dots{\hskip.5em\cdots\hskip.5em}}%
\def\enddynkin{\ifdim\MAXright>1em\hskip.5\MAXright\else\hskip.5em\fi\egroup}%
\begin{document}

	\bigskip
	\title[Descriptions of strongly multiplicity free representations for simple Lie algebras]
	{Descriptions of strongly multiplicity free\\ representations for simple Lie algebras}

	\author{Bin-Ni Sun and Yufeng Zhao}

	\address[Sun]{Department of Mathematics, Peking University,
		Beijing, China}
		\address[Zhao]{Department of Mathematics, Peking University,
		Beijing, China}
		\email{ Zhaoyufeng@math.pku.edu.cn (Corresponding author)}
		\thanks{The research described in this paper is supported by grants No. 7100902512
		from Research Grant Council of China}

	\keywords{simple Lie algebra, strongly multiplicity free module, Casimir operator, invariant endomorphism algebra}
	\subjclass[2010]{Primary 22E47; Secondary 22E46}

	\begin{abstract}
		Let $\frg$ be a complex simple Lie algebra and $Z(\frg)$ be the center of the universal enveloping algebra $U(\frg)$. Denote by $V_\lambda$ the finite-dimensional irreducible $\frg$-module with highest weight $\lambda$.
		Lehrer and Zhang defined  the notion of strongly multiplicity free representations for simple Lie algebras motivated by studying the structure of the endomorphism  algebra  $\End _{U(\frg)}(V_\lambda^{\otimes r})$ in terms of the quotients of the Kohno's infinitesimal braid algebra. 
		Kostant  introduced the $\frg$-invariant endomorphism  algebras  $R_\lambda(\frg)= (\End V_\lambda\otimes U(\frg))^\frg$
		and
		$R_{\lambda,\pi}(\frg)=(\End V_\lambda\otimes \pi (U(\frg)))^\frg.$
		In this paper, we give some other criteria for a multiplicity free representation to be
		strongly multiplicity free by classifying the pairs $(\frg, V_\lambda)$, which are multiplicity free and for such pairs,
		$R_\lambda(\frg)$ and $R_{\lambda,\pi}(\frg) $ are generated by generalizations of the quadratic Casimir elements of $Z(\frg)$.
	\end{abstract}

	\maketitle

	\section{Introduction}\label{section intro}

	A finite-dimensional irreducible weight module for  a complex simple Lie algebra  is said to be  multiplicity free if all the weights of this module occur with  multiplicity one. The complete  classification of multiplicity free representations 
	appears in Howe's 1992 Schur Lecture [Ho].
	Lehrer and Zhang [LZ] introduced a particular kind of  multiplicity free modules named strongly multiplicity free modules for simple Lie algebras motivated by studying the structure of a kind of  endomorphism  algebras in terms of the quotients of Kohno's infinitesimal braid algebra.
	
	 In this paper, we give some other criteria for a multiplicity free module to be  strongly multiplicity free. 
	 Our point of view is formulated during the study of the structures of a kind of invariant endomorphism algebras introduced by Kostant [Ko1].

	To formulate our main theorems, we need the following definitions and assertions.
	Let $\frg$ be a complex simple Lie algebra and $Z(\frg)$ be the center of its universal enveloping algebra $U(\frg)$.
	One says that the representation $\pi: U(\frg)\rightarrow \End V$
	admits an infinitesimal character  and $\chi_{\pi}$ is that character if $\pi(u)$ reduces to a scalar operator $\chi_{\pi}(u)$ on $V$ for every $u \in Z(\frg)$.
	It is a theorem of Dixmier that any irreducible representation  of $U(\frg)$ admits an infinitesimal character.
	
	Fix a finite-dimensional irreducible representation $\pi_{\lambda}\colon \frg \rightarrow \End V_\lambda$ with highest weight $\lambda$. Let
	$$\pi_{\nu} \colon \frg \rightarrow \End V_\nu $$
	be an another  arbitrary  $\frg$-module having an infinitesimal central character $\chi_\nu$.	
	To study the infinitesimal characters occurring in the tensor product
	$V_\lambda\otimes V_\nu$, a celebrated theorem of Kostant [Ko1] says that the generalized infinitesimal characters occurring
	in the tensor product of $V_\lambda\otimes V_\nu$ are $\chi_{\nu + \lambda_i}$, where $\lambda_i$ are the weights of $V_\lambda$.
	In Kostant's proof,  the following $\frg$-invariant endomorphism  algebras  $$R_\lambda(\frg)= (\End V_\lambda\otimes U(\frg))^\frg$$
	and
	$$R_{\lambda,\nu}(\frg)=(\End V_\lambda\otimes \pi_{\nu}[U(\frg)])^\frg$$
	 play pivotal roles.
	These algebras were also investigated from a different perspective by Kirillov [Ki1,Ki2]
	as  `quantum family algebras'.
	It was proved  that
	$R_\lambda(\mathfrak{g})$ and $ R_{\lambda,\nu}(\mathfrak{g})$ are commutative if and only if $V_{\lambda}$ is a multiplicity free module  (cf. [P], [Ki1] and [Ki2]).
	The commutative $\frg$-invariant endomorphism algebra
	has many connections with equivariant cohomology [P].

	As  shown in [Ko2] there is a $\frg$-submodule $E$ of $U(\frg)$
	such that the multiplication
	$$ Z(\frg)\otimes E\rightarrow U(\frg)$$
	is a $\frg$-isomorphism.  It follows that  $R_\lambda(\frg)$ is a free $Z(\frg)$-module.

	Consider the map
	$$\delta_\lambda: U(\frg) \rightarrow \End V_\lambda\otimes U(\frg)$$
	defined by
	$$ \delta_\lambda(x)= \pi_{\lambda}(x)\otimes 1 + \textrm{id}\otimes x\text{ for } x \in \frg,$$
	which is extended to be a homomorphism of associative algebras.	

	Let $m$ be the dimension of $\mathfrak{g}$ and $B$ be the Killing form of $\frg$.  Assume $\{x_1, \cdots, x_m\}$ is a basis of $\frg$ and $\{x_1^*, \cdots, x_m^* \}$ is its dual basis with respect
	to $B$.  The second order Casimir element $C$ is defined by
	$$C=\sum\limits_{i=1}^{m}x_{i}x^{*}_{i}$$
	in $Z(\frg)$, and clearly it is independent of choice of the basis $x_i$.
	Then
	$$ \delta_\lambda(C)= \pi_{\lambda}(C)\otimes 1 +\sum\limits_{i=1}^{m}\pi_\lambda(x_{i})\otimes x^{*}_{i}+\sum\limits_{i=1}^{m}\pi_\lambda(x^*_{i})\otimes x_{i} + 1\otimes C.$$
Set
	$$M_\lambda(C)=\sum\limits_{i=1}^{m}\pi_{\lambda}(x_{i})\otimes x^{*}_{i}.$$
	It is easy to  check that $M_\lambda(C)$ is also independent of choice of the basis $x_i$,
	and thus it is equal to $\sum\limits_{i=1}^{m}\pi_\lambda(x^*_{i})\otimes x_{i}$.
	Then
	$$ \delta_\lambda(C)= \pi_{\lambda}(C)\otimes 1 + 2M_\lambda (C)+ 1\otimes C.$$
	\medskip
	For any other arbitrary irreducible 
	 representation
	 $\pi_{\nu}\colon \frg \rightarrow \End V_\nu$,
	  consider the map
	$$\delta_{\lambda,\nu}: U(\frg) \rightarrow \End V_\lambda\otimes  \End V_\nu$$
	defined by
	$$ \delta_{\lambda,\nu}(x)= \pi_{\lambda}(x)\otimes 1 +  1\otimes \pi_\nu(x) \text{ for } x\in \frg,$$
	which is extended to be a homomorphism of associative algebras.  
	Set $$M_{\lambda,\nu}(C)=\sum\limits_{i=1}^{m}\pi_{\lambda}(x_{i})\otimes \pi_{\nu} (x^{*}_{i}).$$	
	Then
	$$ \delta_{\lambda,\nu}(C)= \pi_{\lambda}(C)\otimes 1 + 2M_{\lambda,\nu}(C)+ 1\otimes \pi_\nu (C). $$
	
	\medskip
	
	Denote by $\Pi(V_{\lambda})$ the weight set of 
	$\frg$-module	$V_{\lambda}$.
	Recall from [LZ] that   a multiplicity free module	$V_{\lambda}$ is called strongly multiplicity free  if  for any pair of  weights $\mu, \nu \in \Pi(V_{\lambda})$, we have $\mu \leq \nu$ or $\nu \leq \mu$, i.e.  $\Pi(V_{\lambda})$ is linearly ordered under the partial ordering. 
	
	The following theorem  gives some equivalent descriptions for strongly multiplicity free
	modules.
	
	\medskip
	
	\noindent{\bf Theorem.}  {\it Assume $ V_\lambda$ is an
		irreducible multiplicity free  $\frg$-module with dimension $d_{\lambda}$. The following assertions are equivalent:

		$\operatorname{a)}$ $V_\lambda$ is a strongly multiplicity free $\mathfrak{g}$-module;

		$\operatorname{b)}$  The semisimple operator $M_{\lambda,\nu}(C)$ has distinct  eigenvalues  on distinct summands of $V_\lambda \otimes V_{\nu}$ for any other arbitrary highest weight module $V_{\nu}$;

		$\operatorname{c)}$ The endomorphism algebra $R_{\lambda,\nu}(\mathfrak{g})$ is a polynomial algebra in $M_{\lambda,\nu} (C)$ for any other arbitrary highest weight module $V_{\nu}$;

		$\operatorname{d)}$  $1,M_{\lambda} (C)\ldots, M_{\lambda}(C)^{d_\lambda-1}$ form a basis of free  $Z(\frg)$-module
		$R_\lambda(\mathfrak{g})$;
		
		$\operatorname{e)}$  $2\text{ht}(\lambda)=d_\lambda-1$;
		
		$\operatorname{f)}$   $V_\lambda$ is irreducible when restricted to a principal $\frsl_2$ in $\frg$.
	}

	\medskip

	\noindent{\bf Remark.}  {\it 
		The notation $\text{ht}(\lambda)$ is defined in Equation \eqref{height}.
	}
	
	\medskip
	
	The paper is organized as follows: In section 2, we will recall some basic classifications and properties related to multiplicity free and strongly multiplicity free representations for complex simple Lie algebras. And some facts about the Prouhet-Tarry-Escott problem in 
	Diophantine number theory are collected for the proof of the main theorem.
	In section 3, we will prove the assertions a), b) and c) are equivalent to each other.
	In section 4, we will prove that assertion e) is equivalent to assertion f) 
	by means of the principal three dimensional Lie subalgebra techniques; and d) implies e) by Poincar\'{e} polynomials of multiplicity free representations for simple Lie algebras. 
	We will also prove that a) implies d). In section 5, we will prove that assertion a) is equivalent to assertion b) and e) through case by case calculations. In the appendix, we will  present explicit formulas of eigenvalues of Casimir operators in terms of power sums; we will also give the M-type matrices for the strongly multiplicity free  modules.
	\medskip
	\section{Preliminaries}
	
	In this section, we will prepare  some basic properties related to infinitesimal characters and 
	strongly multiplicity free representations. We will also collect some facts about the Prouhet-Tarry-Escott problem in 
	Diophantine number theory for the proof of the main theorem.
	
	Let $\frg$ be a complex simple Lie algebra, $ \mathfrak{h} $ be a Cartan subalgebra, $\mathfrak{h}^{*} $ be the dual space to $ \mathfrak{h} $ and $\phi^{+}$ be a system of positive roots.	
	Assume  $\Pi =\{\alpha_{1}, \cdots,\alpha_{l}\}$ is the
	simple root system in $\phi^{+}$ and $\delta$ is the half sum of positive  roots and $W$ is  the Weyl group.  Denote $ \mathfrak{n}=\sum\limits_{\alpha \in \phi^{+}}\frg^{\alpha}$ and  $ \mathfrak{n}^{-}=\sum\limits_{-\alpha \in \phi^{+}}\frg^{\alpha}$. Let $\mathcal{P}_{+}$ be the set of dominant integral weights for $\mathfrak{g}$.	
	 The fundamental dominant weights are denoted by $\{\omega_1,\cdots, \omega_l\}$, i.e. $$\frac{2(\omega_{i},\alpha_{j})}{(\alpha_{j},\alpha_{j})}=\delta_{i,j}.$$
	 Let $\{h_{1}, \cdots, h_{l}\}$ be the dual basis of $\{\omega_1,\cdots, \omega_l\}$ in $\frh$, i.e.  $	 
	\omega_{i}(h_{j})=\delta_{i,j}$.
	 	
	Let $\leq$ denote the usual ordering induced on the weights by the positive roots, i.e. $\lambda \leq \mu$ if and only if $\mu-\lambda$ is a non-negative $\mathbb{Z}$-linear combination of positive roots.
	For $\lambda \in \mathcal{P}_{+}$, the irreducible $\mathfrak{g}$-module $V_{\lambda}$ with highest weight $\lambda$ is finite-dimensional. 
	Denote by $\Pi(V_{\lambda})$ the weight set of 
	$\frg$-module	$V_{\lambda}$.

		Consider the zero weight space of $U(\frg)$:
	$$U(\frg)_0=\{x\in U(\frg) \ | \ [h,x]=0, \ \forall \ h \in \frh\}.$$
	The vector space $$\mathfrak{K} =
	U(\frg)_0 \cap U(\frg)\mathfrak{n} =U(\frg)_0 \cap \mathfrak{n}^{-}U(\frg)$$ is a 2-sided ideal of $U(\frg)$
	and $U(\frg)_0= \mathfrak{K} \oplus U(\frh)$. Let $$\psi:U(\frg)_0\rightarrow U(\frh)$$ be the projection map obtained from this decomposition, which is called Harish-Chandra homomorphism.
	For any $\lambda \in \mathfrak{h}^{*}$, the infinitesimal central character is given by 
	$$\chi_{\lambda}(u)=\lambda(\psi(u)), \ \forall u \in Z(\mathfrak{g}).$$

	In particular, the eigenvalues of second order Casimir element $C$ is uniquely determined by $$\chi_{\lambda}(C)=(\lambda,\lambda+2\delta).$$	In the Appendix, we will give Popov's formulas related to  the eigenvalues of Casimir element $C$ in terms of power sum functions.
	
	Since the Lie subalgebra $ \mathfrak{h}$ is abelian, we can identify the universal enveloping algebra $U(\mathfrak{h})$ with the symmetric algebra $ S(\mathfrak{h}) $. Define the following twisting algebra homomorphism:
	$$\tau:U(\mathfrak{h})\rightarrow  U(\mathfrak{h});\tau(h_{i})=h_{i}-1, \  i =1,\cdots, l.$$
	We know that both $ \mathfrak{h}^*$ and $ \mathfrak{h}$ are Weyl group $W$-modules. And the $W$-actions on  $ \mathfrak{h}^*$ and $ \mathfrak{h}$ are related by:$$(\omega f)(h)=f(\omega^{-1}h), \forall \ \omega \in W, f \in  \mathfrak{h}^*, h \in  \mathfrak{h}.$$
	For any $\nu\in \mathfrak{h}^{*} $, denote the translation map
	$$\tau_{\nu}:
	\mathfrak{h}^{*} \rightarrow \mathfrak{h}^{*} ; \lambda\mapsto  \lambda+\nu, \forall   \  \lambda
	\in          \mathfrak{h}^{*}.$$ 		
Now the translated Weyl group $$\widetilde{W}=\{\tilde{\sigma}=\tau_{-\delta}\sigma \tau_{\delta}  | \ \sigma \in W\}$$ acts on
	$U( \mathfrak{h})$ via:
	$$\lambda(\tilde{\sigma}(h))=[\tilde{\sigma}^{-1}(\lambda)](h)=[{\sigma}^{-1}(\lambda+\delta)-\delta](h), \forall \ \sigma \in W, \lambda \in  \mathfrak{h}^*, h \in  \mathfrak{h}.$$By Theorem 11.30 in Carter's book [C], we know that the map $\tau \psi$ gives an isomorphism of algebras $Z(\frg)\rightarrow S(\frh)^{W}$. Then we have:
	$$[\sigma(\lambda)-\delta](\psi(z))=(\lambda-\delta)(\psi(z)), \ \forall \ \sigma \in W, \lambda \in \mathfrak{h}^*, z \in Z(\frg).$$Hence,
	$$\tilde{\sigma}(\psi(z))=\psi(z), \ \forall \ \sigma \in W, z\in Z(\frg).$$	
	 Therefore,
	the map $z\mapsto \psi(z)$ is an algebra isomorphism from $Z(\frg)$ to  $U( \mathfrak{h})^{\widetilde{W}}$ (cf. Theorem 2.2 in [Ko1]).

	\begin{defi}
		An irreducible $\mathfrak{g}$-module $V_{\lambda}$ is called multiplicity free if each weight space	of	
		$V_{\lambda}$ has dimension one.
	\end{defi}

	The following  list  is perhaps well-known to experts.  It appears in Howe's 1992 Schur Lecture [Ho]:

	\begin{thm}
		Let $\mathfrak{g}$ be a complex simple Lie algebra. The following is a list of the  multiplicity free irreducible $\mathfrak{g}$-modules: 
		
		(1) $A_{l}$-module $V_{\omega_{k}}$ ( $ k=1,\ldots, l$),
		$V_{k\omega_1}, V_{k\omega_l}, k=1,2,\ldots$;
		
		(2) $B_{l}$-module
		$(l\geq 2) V_{\omega_1}, \omega_l  \text{ (spin representation)}$;
		
		(3) $C_{l}$-module 
		$(l\geq 3) V_{\omega_1}$, $C_3$-module $V_{\omega_3}$;

		(4)	$D_l (l\geq 4)$-module $V_{\omega_1}$, $V_{\omega_{l-1}}$,  $V_{\omega_l}  \text{ (spin representations)}$;

		(5) $E_6$-module $V_{\omega_1}$, $V_{\omega_6}$ ($\dim V_{\omega_{1}} = \dim V_{\omega_{6}} = 27$);
		
		(6) $E_7 $-module $V_{\omega_7}$ ($\dim V_{\omega_{7}} = 56$);
		
		(7) $G_{2}$-module $V_{\omega_{1}}$ ($\dim V_{\omega_{1}} = 7$).
		
	\end{thm}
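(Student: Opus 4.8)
The plan is to prove the two inclusions separately.

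\emph{Overview and the easy inclusion.} That every pair on the list is multiplicity free is the easy direction: most entries are minuscule, i.e.\ $\langle\lambda,\alpha^\vee\rangle\le 1$ for every positive root $\alpha$, so that $\Pi(V_\lambda)=W\lambda$ is a single Weyl orbit and hence --- as $W$ permutes the weight spaces and $\dim V_\lambda(\lambda)=1$ --- every weight space is one dimensional; this covers $V_{\omega_k}$ for $A_l$, $V_{\omega_l}$ for $B_l$, $V_{\omega_1},V_{\omega_{l-1}},V_{\omega_l}$ for $D_l$, $V_{\omega_1}$ for $C_l$, the two $27$'s of $E_6$ and the $56$ of $E_7$. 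The families $V_{k\omega_1},V_{k\omega_l}$ for $A_l$ are $\mathrm{Sym}^k(\bbC^{l+1})$ and $\mathrm{Sym}^k((\bbC^{l+1})^*)$, whose monomial bases are weight bases with pairwise distinct weights; $V_{\omega_1}$ for $B_l$ (weights the $\pm e_i$ and $0$) and $V_{\omega_1}$ for $G_2$ (weights the short roots and $0$) have a one-dimensional zero weight space by a count; and $V_{\omega_3}$ for $C_3$ is handled inside $\wedge^3\bbC^6=V_{\omega_3}\oplus V_{\omega_1}$, whose weight multiplicities one writes out directly.

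\emph{Restricting the converse to finitely many shapes.} The essential point is that the weight multiplicities of $V_\lambda$ depend only on its $\frh$-module structure: if $\frl\supseteq\frh$ is reductive and $V^\frl_\mu$ is a constituent of $V_\lambda|_\frl$, then $V^\frl_\mu$ is again multiplicity free. Applied to the Levi subalgebras attached to a pair of adjacent Dynkin nodes $i,j$ --- semisimple part of type $A_2$, $B_2$ or $G_2$ --- together with the elementary rank-two classifications (for $A_2$ one needs $\langle\lambda,\alpha_i^\vee\rangle=0$ or $\langle\lambda,\alpha_j^\vee\rangle=0$; for $B_2$ one needs $\langle\lambda,\alpha_i^\vee\rangle+\langle\lambda,\alpha_j^\vee\rangle\le 1$; for $G_2$ one needs the long label to vanish and the short one to be at most $1$), this forces the support $\{i:\langle\lambda,\alpha_i^\vee\rangle>0\}$ to be an independent set of the Dynkin diagram, with the only labels allowed to exceed $1$ sitting at leaves of type-$A$ strings; using in addition the $A_3$- and $D_4$-Levis one excludes large labels at interior nodes and all support at a branch node. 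Together with a crude dimension estimate this leaves only finitely many shapes in the exceptional types and finitely many rank-parametrized shapes in the classical types.

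\emph{Finishing by a dimension count.} By Weyl's formula $\dim V_\lambda$ is a polynomial of degree $|\phi^+|$ in the labels $a_i=\langle\lambda,\alpha_i^\vee\rangle$, whereas the number of distinct weights of $V_\lambda$ is the number of lattice points of the weight polytope $\mathrm{conv}(W\lambda)$ in the coset $\lambda+Q$ ($Q$ the root lattice), a quasi-polynomial of degree $\operatorname{rk}(\frg)$; since $|\phi^+|>\operatorname{rk}(\frg)$ for every simple $\frg$ except $A_1$ --- for which every module is multiplicity free, as the list reflects --- $\dim V_\lambda$ exceeds the number of its weights once $\lambda$ lies deep in the dominant cone, and on the finitely many shapes still in play the inequality $\dim V_\lambda>\#\Pi(V_\lambda)$ comes down to a short list of polynomial inequalities in the $a_i$, uniform in the rank for the classical series. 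The low-rank and exceptional residues ($E_6,E_7,E_8,F_4$ and the small $A_l,B_l,C_l,D_l$) are then disposed of with Freudenthal's recursion, usually by recognizing $V_\lambda$ --- or a summand of an exterior or symmetric power of the defining module --- as an adjoint module or a module of spherical harmonics, whose zero weight space has dimension at least $2$.

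\emph{Where the difficulty lies.} The main obstacle is the second half: keeping the case analysis genuinely finite and uniform in the rank of the classical families, and arranging it correctly. In particular the tempting shortcut ``multiplicity freeness descends along the dominance order'' is \emph{false} --- for instance $\mathrm{Sym}^3\bbC^3=V_{3\omega_1}$ dominates the adjoint module $V_{\omega_1+\omega_2}$ of $\frsl_3$, yet $V_{3\omega_1}$ is multiplicity free while the adjoint is not --- so the reduction must proceed through Levi subalgebras and the Weyl-dimension-versus-lattice-point count rather than through dominance monotonicity. A secondary point is to verify that the genuinely sporadic entries $V_{\omega_3}$ for $C_3$ and the spin modules ($V_{\omega_l}$ for $B_l$, $V_{\omega_{l-1}},V_{\omega_l}$ for $D_l$) survive all of these tests.
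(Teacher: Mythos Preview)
The paper does not prove this theorem: it is quoted as a known classification and attributed to Howe's Schur lectures [Ho], with no argument supplied. So there is no proof in the paper against which to compare your proposal.

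Your outline is the standard route to this result --- Levi restriction to cut down the possible shapes of $\lambda$, then a Weyl-dimension versus weight-count comparison, then a residual finite check --- and the architecture is sound. The easy direction is handled correctly, and your warning that multiplicity freeness is \emph{not} monotone for the dominance order is well taken.

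Where the sketch is loose is in how much the Levi step actually accomplishes. Your summary of the $A_3$-Levi (``excludes large labels at interior nodes'') misses configurations such as $(1,0,1)$ on an $A_3$-string: this has independent support and all labels $\le 1$, yet $V_{\omega_1+\omega_3}$ for $A_3$ is the adjoint module and is not multiplicity free. More generally, for $B_3$ the labels $(1,0,1)$ pass all of your rank-two tests (the $(1,2)$-edge is $A_2$ with one label zero; the $(2,3)$-edge is $B_2$ with label sum $1$), there is no $A_3$- or $D_4$-Levi available, and yet $V_{\omega_1+\omega_3}$ is not on the list. Such cases must be killed by the dimension/lattice-point comparison or by Freudenthal, so more of the burden falls on your ``crude dimension estimate'' and the residual case analysis than the write-up suggests. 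None of this is a genuine obstruction --- the strategy does go through once those steps are written out --- but the Levi constraints alone do not reduce the problem as cleanly as your paragraph implies.
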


	\begin{defi}
		The irreducible multiplicity free $\frg$-module	$V_{\lambda}$ is called strongly multiplicity free  if for any pair of  weights $\mu, \nu \in \Pi(V_{\lambda})$, we have $\mu \leq \nu$ or $\nu \leq \mu$, i.e.  $\Pi(V_{\lambda})$ is linearly ordered under the partial ordering.
	\end{defi}

	\begin{thm}
		Let $\mathfrak{g}$ be a complex simple Lie algebra. The following is a complete list of the strongly multiplicity free irreducible $\mathfrak{g}$-modules [LZ]: 
		
		(1) $A_{l}$-module $V_{\omega_{1}}$ and its dual $V_{\omega_{l}}$ for $l \geq 2$; 
		
		(2) $B_{l}$-module $V_{\omega_{1}}$ for $l \geq 2$;
		
		(3) $C_{l}$-module $V_{\omega_{1}}$ for $l \geq 3$;
		
		(4) $G_{2}$-module $V_{\omega_{1}}$;
		
		(5) all irreducible $\mathfrak{sl}_{2}$-modules of dimension greater than $1$. 
	\end{thm}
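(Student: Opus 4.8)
\emph{Proof proposal.} The plan is to reduce the statement to a finite verification against Howe's classification (Theorem~2.2): a strongly multiplicity free module is in particular multiplicity free, so it suffices to run through each entry of that list and decide whether its weight set $\Pi(V_\lambda)$ is linearly ordered by $\leq$. Two elementary observations carry the argument. First, if one can list \emph{all} $d_\lambda=\dim V_\lambda$ weights of $V_\lambda$ as a strictly decreasing string $\lambda=\mu_0>\mu_1>\cdots>\mu_{d_\lambda-1}=w_0\lambda$ in which each consecutive difference $\mu_{i-1}-\mu_i$ is a positive root, then $\Pi(V_\lambda)$ is a chain, since for $i<j$ the difference $\mu_i-\mu_j$ is a nonempty sum of positive roots; as $V_\lambda$ is multiplicity free, producing $d_\lambda$ such weights already exhausts $\Pi(V_\lambda)$. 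Second, to exclude a module it is enough to exhibit two weights $\mu,\nu\in\Pi(V_\lambda)$ with $\mu-\nu=\alpha_p-\alpha_q$ for distinct simple roots $\alpha_p\neq\alpha_q$, since such a vector is neither a non-negative nor a non-positive $\mathbb{Z}$-combination of the simple roots. Also $V_\lambda$ is strongly multiplicity free iff $V_\lambda^{*}$ is, because $\Pi(V_\lambda^{*})=-\Pi(V_\lambda)$ and negation reverses $\leq$; this handles the dual entries together.

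For the modules claimed to be strongly multiplicity free I would just write the chain down in the standard coordinates. For $A_l$ the weights of $V_{\omega_1}$ are $\epsilon_1>\cdots>\epsilon_{l+1}$ (use $\epsilon_i-\epsilon_j=\alpha_i+\cdots+\alpha_{j-1}$ for $i<j$); for $B_l$ those of $V_{\omega_1}$ are $\epsilon_1>\cdots>\epsilon_l>0>-\epsilon_l>\cdots>-\epsilon_1$ (use $\epsilon_l=\alpha_l$); for $C_l$ those of $V_{\omega_1}$ are $\epsilon_1>\cdots>\epsilon_l>-\epsilon_l>\cdots>-\epsilon_1$ (use $2\epsilon_l=\alpha_l$); for $G_2$ the seven weights of $V_{\omega_1}$ are $0$ and the six short roots, and $2\alpha_1+\alpha_2>\alpha_1+\alpha_2>\alpha_1>0>-\alpha_1>-\alpha_1-\alpha_2>-2\alpha_1-\alpha_2$ is a chain with simple differences; and every irreducible $\frsl_2$-module has weights $n,n-2,\dots,-n$. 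Together with the duals this produces (1)--(5).

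For the remaining entries of Howe's list I would exhibit an incomparable pair in each. For $A_l$, $V_{\omega_k}$ with $2\le k\le l-1$: take two basis weights of $\bigwedge^{k}$ of the defining module agreeing away from two chosen indices, those two being $\{1,4\}$ against $\{2,3\}$, so the difference is $(\epsilon_1-\epsilon_2)-(\epsilon_3-\epsilon_4)=\alpha_1-\alpha_3$. For $A_l$, $V_{k\omega_1}$ with $l\ge2$ and $k\ge2$: take $\mathrm{Sym}^{k}$-weights realising $2\epsilon_2-\epsilon_1-\epsilon_3=-\alpha_1+\alpha_2$. For the spin module $V_{\omega_l}$ of $B_l$ with $l\ge3$: pick two sign patterns of $\tfrac12(\pm\epsilon_1\pm\cdots\pm\epsilon_l)$ whose difference is $\alpha_1-\alpha_3$. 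For $C_3$, $V_{\omega_3}$: use $(\epsilon_1-\epsilon_2+\epsilon_3)-\epsilon_2=\alpha_1-\alpha_2$. For $D_l$, $V_{\omega_1}$ with $l\ge4$: use $\epsilon_l-(-\epsilon_l)=2\epsilon_l=\alpha_l-\alpha_{l-1}$. For the two half-spin modules of $D_l$ ($l\ge4$): a four-coordinate sign flip again gives $\alpha_1-\alpha_3$. The two large exceptional modules, the $27$-dimensional $V_{\omega_1},V_{\omega_6}$ of $E_6$ and the $56$-dimensional $V_{\omega_7}$ of $E_7$, I would exclude numerically: the principal $\frsl_2\subset\frg$ acts on $V_\lambda$ with largest semisimple eigenvalue $2\,\text{ht}(\lambda)$, so $V_\lambda$ always contains a principal $\frsl_2$-submodule of dimension $2\,\text{ht}(\lambda)+1$; a chain of weights forces $d_\lambda=2\,\text{ht}(\lambda)+1$, whereas $2\,\text{ht}(\lambda)+1=17<27$ for $E_6$ and $=28<56$ for $E_7$. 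This inequality $d_\lambda\ge 2\,\text{ht}(\lambda)+1$ also serves as a uniform numerical cross-check in all the other cases.

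The main obstacle is not conceptual but organizational: the proof is a bounded yet lengthy case analysis, and the delicate point is the bookkeeping around the low-rank coincidences $B_1\cong C_1\cong A_1$, $B_2\cong C_2$, $D_2\cong A_1\times A_1$, $D_3\cong A_3$, so that every pair $(\frg,V_\lambda)$ is counted exactly once — for instance, under $B_2\cong C_2$ the spin module of $B_2$ is the defining module of $C_2$ — together with the routine check that each explicit decreasing weight string above really lists all of $\Pi(V_\lambda)$, which is immediate from multiplicity freeness.
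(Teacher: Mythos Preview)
Your argument is correct, but there is nothing in the paper to compare it against: the paper does not prove this theorem, it quotes it from Lehrer--Zhang [LZ] as established background. Your direct verification against Howe's list---building an explicit chain of weights for each module in (1)--(5) and producing an incomparable pair $\mu-\nu=\alpha_p-\alpha_q$ (or, for $E_6$ and $E_7$, the height obstruction) for every other multiplicity-free module---is the standard route and is sound.

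Two small points. First, in your height argument for $E_6$ and $E_7$ you should make explicit why a chain forces $d_\lambda\le 2\,\mathrm{ht}(\lambda)+1$: distinct chain elements have distinct $h^0$-eigenvalues (because $\mu<\nu$ gives $(\nu-\mu)(h^0)\ge 2$), and since $-w_0$ permutes the simple roots one has $\mathrm{ht}(\lambda^*)=\mathrm{ht}(\lambda)$, so those eigenvalues all lie in the $(2\,\mathrm{ht}(\lambda)+1)$-element set $\{-2\,\mathrm{ht}(\lambda),\,-2\,\mathrm{ht}(\lambda)+2,\dots,2\,\mathrm{ht}(\lambda)\}$. Combined with the principal-$\frsl_2$ submodule you already noted, equality follows, and then $17<27$ and $28<56$ finish those cases.

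Second, your instinct about low-rank bookkeeping is exactly on target and in fact exposes a wrinkle in the statement as written. The $B_2$ spin module $V_{\omega_2}$ \emph{is} strongly multiplicity free: its four weights $\tfrac12(\pm\epsilon_1\pm\epsilon_2)$ form the chain with successive differences $\alpha_2,\alpha_1,\alpha_2$, and the paper's own Proposition~5.6 confirms $2\,\mathrm{ht}(\omega_2)+1=4=\dim V_{\omega_2}$ when $l=2$. Yet it is not literally named in the list, since item~(2) mentions only $V_{\omega_1}$ for $B_l$ and item~(3) restricts $C_l$ to $l\ge 3$. That is an artefact of how the classification is phrased modulo the isomorphism $B_2\cong C_2$, not a defect in your reasoning.
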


	\begin{prop}\label{smf_has_distinct_eigs}
		Assume $V_{\lambda}$ is strongly multiplicity free and $\nu \in \mathcal{P}_{+}$. For the weights $\lambda_{i}, \lambda_{j} \in \Pi(V_{\lambda})$ such that $\nu + \lambda_{i}$ and $\nu + \lambda_{j} \in \mathcal{P}_{+}$, we have $\chi_{\nu + \lambda_{i}}(C) = \chi_{\nu + \lambda_{j}}(C)$ if and only if $\lambda_{i} = \lambda_{j}$.
	\end{prop}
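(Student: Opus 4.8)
The plan is to reduce the statement to the standard formula $\chi_\mu(C)=(\mu,\mu+2\delta)=(\mu+\delta,\mu+\delta)-(\delta,\delta)$ recorded above, and then to exploit the fact that strong multiplicity freeness makes $\Pi(V_\lambda)$ totally ordered. The ``if'' direction is immediate: if $\lambda_i=\lambda_j$ then $\nu+\lambda_i=\nu+\lambda_j$, so the two infinitesimal characters agree.

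For the ``only if'' direction I would argue contrapositively. Suppose $\lambda_i\neq\lambda_j$. Because $V_\lambda$ is strongly multiplicity free, the weights of $V_\lambda$ are linearly ordered, so after relabelling the indices we may assume $\lambda_j<\lambda_i$; set $\mu=\lambda_i-\lambda_j$, which by the definition of $\leq$ is a non-zero non-negative integral combination of simple (equivalently, positive) roots. Then, using the formula for $\chi$ above,
\[
\chi_{\nu+\lambda_i}(C)-\chi_{\nu+\lambda_j}(C)=(\nu+\lambda_i+\delta,\nu+\lambda_i+\delta)-(\nu+\lambda_j+\delta,\nu+\lambda_j+\delta)=\bigl(\mu,\,(\nu+\lambda_i+\delta)+(\nu+\lambda_j+\delta)\bigr).
\]
Since $\nu+\lambda_i$ and $\nu+\lambda_j$ lie in $\mathcal{P}_+$ by hypothesis and $\delta$ is regular dominant, both $\nu+\lambda_i+\delta$ and $\nu+\lambda_j+\delta$ are regular dominant, hence pair strictly positively with every simple root, and therefore with $\mu$. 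Thus the right-hand side is strictly positive, so $\chi_{\nu+\lambda_i}(C)\neq\chi_{\nu+\lambda_j}(C)$, which is the desired contrapositive.

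The only genuine point in the argument is the very first reduction: it is precisely the \emph{strong} multiplicity free hypothesis (and not mere multiplicity freeness) that supplies the comparability $\lambda_j<\lambda_i$, after which the conclusion is just the classical strict monotonicity of $\mu\mapsto\chi_\mu(C)=(\mu+\delta,\mu+\delta)-(\delta,\delta)$ along the dominance order on dominant weights. I expect no real obstacle beyond being careful that $\nu+\lambda_i+\delta$ and $\nu+\lambda_j+\delta$ are regular dominant, so that the pairing of a non-zero sum of positive roots against them is strictly, rather than just weakly, positive. I would also note that the Proposition fails if $V_\lambda$ is only multiplicity free, since then $\lambda_i$ and $\lambda_j$ may be incomparable and the two Casimir eigenvalues can coincide; in this sense the statement is essentially a Casimir-eigenvalue reformulation of the defining property of strong multiplicity freeness, which is why it will feed naturally into the equivalence of assertions a) and b) of the main Theorem.
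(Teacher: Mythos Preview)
Your proof is correct and follows essentially the same approach as the paper: both compute $\chi_{\nu+\lambda_i}(C)-\chi_{\nu+\lambda_j}(C)=(\lambda_i-\lambda_j,\ \nu+\lambda_i+\nu+\lambda_j+2\delta)$, invoke strong multiplicity freeness to make $\lambda_i-\lambda_j$ a nonzero sum of positive (or negative) roots, and then observe that this pairs strictly positively (respectively negatively) with the regular dominant element on the right. Your version is slightly more explicit about why the pairing is strict (via the regular dominance of the $+\delta$ shifts), but the argument is the same.
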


	\begin{proof} 
		Let $\varLambda_{i} = \nu + \lambda_{i}$ and $\varLambda_{j} = \nu + \lambda_{j}$. Assume $\chi_{\varLambda_{i}}(C)= \chi_{\varLambda_{j}}(C)$. Then
		$$
		0 = \chi_{\varLambda_{i}}(C) - \chi_{\varLambda_{j}}(C) = (\varLambda_{i} + \varLambda_{j} + 2\delta, \lambda_{i} - \lambda_{j}).
		$$
		Since $\varLambda_{i} + \varLambda_{j} + 2\delta \in \mathcal{P}_{+}$ and $\lambda_{i} - \lambda_{j}$ is either a sum of positive or negative roots of $\mathfrak{g}$, it follows that $\lambda_{i} = \lambda_{j}$. The sufficient condition is obvious.
	\end{proof}

	We observe that some multiplicity free but not strongly multiplicity free weights are minuscule.
	Kass [K] gave an explicit $\mathfrak{g}$-module decomposition related to  minuscule modules as follows:
	
	\begin{prop}
		Assume $\lambda$ is a minuscule weight and $\nu \in \mathcal{P}_{+}$, then
		$$V_{\lambda} \otimes V_{\nu} = \sum_{\substack{\lambda_{i} \in \Pi(V_{\lambda}) \\ \nu + \lambda_{i} \in \mathcal{P}_{+}}} V_{\nu + \lambda_{i}}.$$
	\end{prop}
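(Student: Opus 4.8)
The plan is to establish the corresponding identity of formal characters and then upgrade it to an isomorphism of modules via complete reducibility. Recall that a minuscule weight $\lambda$ is characterized by the fact that $\Pi(V_\lambda)$ is a single Weyl group orbit $W\lambda$, each weight occurring with multiplicity one; equivalently, $\langle \lambda,\alpha^\vee\rangle\le 1$ for every positive root $\alpha$. Hence $\operatorname{ch}V_\lambda=\sum_{\mu\in W\lambda}e^{\mu}$. Writing $A_\xi=\sum_{w\in W}\det(w)\,e^{w\xi}$ and using the Weyl character formula $\operatorname{ch}V_\nu=A_{\nu+\delta}/A_\delta$, I would reindex the product: for each fixed $w$, substituting $\mu=w\lambda'$ (a bijection of $W\lambda$) gives $w(\nu+\delta)+\mu=w(\nu+\lambda'+\delta)$, so that
$$\operatorname{ch}V_\lambda\cdot\operatorname{ch}V_\nu=\frac{1}{A_\delta}\sum_{\lambda'\in W\lambda}\sum_{w\in W}\det(w)\,e^{w(\nu+\lambda'+\delta)}=\sum_{\lambda'\in W\lambda}\frac{A_{\nu+\lambda'+\delta}}{A_\delta}.$$
(This is the special case of the Brauer--Klimyk/Racah--Speiser formula relevant here, but it is obtained directly by this computation.)

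The main step is then to evaluate each term $A_{\nu+\lambda'+\delta}/A_\delta$ for $\lambda'\in W\lambda$ and $\nu\in\mathcal{P}_{+}$. The key claim is that $\langle\lambda',\alpha_i^\vee\rangle\ge -1$ for every simple root $\alpha_i$: writing $\lambda'=w\lambda$ one has $\langle\lambda',\alpha_i^\vee\rangle=\langle\lambda,\beta^\vee\rangle$ with $\beta=w^{-1}\alpha_i$ a root, and this is $\ge 0$ when $\beta>0$ (because $\lambda$ is dominant) and equals $-\langle\lambda,(-\beta)^\vee\rangle\in\{0,-1\}$ when $\beta<0$ (because $\lambda$ is minuscule). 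Consequently $\langle\nu+\lambda'+\delta,\alpha_i^\vee\rangle=\langle\nu,\alpha_i^\vee\rangle+\langle\lambda',\alpha_i^\vee\rangle+1\ge 0$ for all $i$, so $\nu+\lambda'+\delta$ is dominant. It is therefore either regular — in which case $A_{\nu+\lambda'+\delta}/A_\delta=\operatorname{ch}V_{\nu+\lambda'}$, and regularity is equivalent to $\nu+\lambda'\in\mathcal{P}_{+}$ — or it lies on a wall $\ker\alpha_i^\vee$, in which case it is fixed by $s_{\alpha_i}$ and $A_{\nu+\lambda'+\delta}=\det(s_{\alpha_i})A_{\nu+\lambda'+\delta}=-A_{\nu+\lambda'+\delta}=0$. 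Collecting the surviving terms yields
$$\operatorname{ch}V_\lambda\cdot\operatorname{ch}V_\nu=\sum_{\substack{\lambda_i\in\Pi(V_\lambda)\\ \nu+\lambda_i\in\mathcal{P}_{+}}}\operatorname{ch}V_{\nu+\lambda_i},$$
and distinct $\lambda_i$ obviously give distinct highest weights $\nu+\lambda_i$.

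Finally, since every finite-dimensional $\frg$-module is completely reducible and the characters $\{\operatorname{ch}V_\mu\}_{\mu\in\mathcal{P}_{+}}$ are linearly independent, this character identity forces the asserted direct sum decomposition. The only real subtlety is the lower bound $\langle\lambda',\alpha_i^\vee\rangle\ge -1$: this is precisely where the \emph{minuscule} hypothesis (as opposed to mere multiplicity-freeness) is essential, guaranteeing that all correction terms in the expansion either survive as genuine summands or vanish on a wall, rather than contributing with a minus sign. For multiplicity-free but non-minuscule $\lambda$ such cancellations do occur, which is why the Proposition is special to the minuscule case.
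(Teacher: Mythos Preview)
Your proof is correct. The paper, however, does not supply its own argument for this proposition: it simply attributes the result to Kass [K] and states it without proof. Your approach is the classical one via the Weyl character formula (essentially the Brauer--Klimyk/Racah--Speiser method specialized to minuscule $\lambda$): multiply $\operatorname{ch}V_\lambda=\sum_{\mu\in W\lambda}e^\mu$ by $A_{\nu+\delta}/A_\delta$, reindex, and then use the minuscule bound $\langle\lambda',\alpha_i^\vee\rangle\ge -1$ to show that every resulting $\nu+\lambda'+\delta$ is dominant, hence each term either contributes $\operatorname{ch}V_{\nu+\lambda'}$ (when $\nu+\lambda'\in\mathcal P_+$) or vanishes on a wall. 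The crucial observation --- that minuscule, rather than mere multiplicity-freeness, is what prevents any negative-sign cancellations --- is exactly right and is what distinguishes this case from the general Kostant multiplicity formula the paper quotes elsewhere. Compared with a bare citation, your argument has the advantage of being self-contained and of making transparent precisely where the minuscule hypothesis enters.
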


	The  Prouhet-Tarry-Escott problem (written briefly as PTE problem) consists of finding two lists of integers $[x_{1}, x_{2}, \dots, x_{n}]$ and $[y_{1}, y_{2}, \dots, y_{n}]$,  distinct up to permutation, such that
	$$
	\sum_{i = 1}^{n}x_{i}^{j} = \sum_{i = 1}^{n}y_{i}^{j}, \qquad j = 1, 2, \dots, m.
	$$
	We call $n$ the size of the solution and $m$ the degree. 
	A trivial solution of PTE problem means that the $x$'s merely form a permutation of the $y$'s.

	\begin{lemma} \label{PTE_exist}
		For a non-trivial solution to exist, then $n \geq m+1$ (cf. [DB]).
	\end{lemma}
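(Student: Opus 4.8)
The plan is to pass from the power sums to the two monic generating polynomials and argue by comparison of degrees. I would set $P(t) = \prod_{i=1}^{n}(t - x_i)$ and $Q(t) = \prod_{i=1}^{n}(t - y_i)$, two monic polynomials of degree $n$ over $\mathbb{Q}$, and write $e_k(x), e_k(y)$ (with $e_0 = 1$) for their elementary symmetric functions, so that $P(t) = \sum_{k=0}^{n}(-1)^k e_k(x)\,t^{n-k}$ and similarly for $Q$. The key initial observation is that the solution is non-trivial precisely when $\{x_1,\dots,x_n\}$ and $\{y_1,\dots,y_n\}$ differ as multisets, which (since a monic polynomial is determined by its multiset of roots) is precisely the statement $P \neq Q$ as polynomials. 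Thus it suffices to show: if $P \neq Q$ while the power sums $p_j := \sum_i x_i^{\,j}$ and $\sum_i y_i^{\,j}$ agree for $j = 1,\dots,m$, then $n \geq m+1$.

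Next I would invoke Newton's identities: for $1 \leq k \leq n$ one has $k\,e_k = \sum_{i=1}^{k}(-1)^{i-1} e_{k-i}\,p_i$, which recursively expresses $e_k$ in terms of $e_1,\dots,e_{k-1}$ and $p_1,\dots,p_k$. An easy induction on $k$ then gives that the coincidence of the first $\min(m,n)$ power sums of the two lists forces $e_k(x) = e_k(y)$ for all $k \leq \min(m,n)$. Here one uses characteristic zero in order to divide by $k$; this is harmless since all entries are integers.

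To conclude I would compare degrees. If $m \geq n$, then $e_k(x) = e_k(y)$ for every $k \leq n$, hence $P = Q$, contradicting non-triviality; therefore $m < n$. Then the coefficients of $t^{n}, t^{n-1}, \dots, t^{n-m}$ in $P - Q$ all vanish, so $\deg(P - Q) \leq n - m - 1$, while $P - Q$ is a nonzero polynomial and hence has degree $\geq 0$; therefore $n - m - 1 \geq 0$, i.e. $n \geq m+1$.

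There is no serious obstacle here; this is the classical argument behind [DB]. The only points requiring a moment's care are the equivalence between non-triviality of the solution and $P \neq Q$, and the legitimacy of Newton's identities over $\mathbb{Q}$. If one preferred to avoid symmetric-function bookkeeping, the same conclusion drops out of the finite-difference reformulation — agreement of the first $m$ power sums is equivalent to $\sum_i g(x_i) = \sum_i g(y_i)$ for every $g \in \mathbb{Q}[t]$ with $\deg g \leq m$ — but the polynomial-coefficient argument above is the most economical route.
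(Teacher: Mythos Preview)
Your argument is correct. The paper itself does not give a proof of this lemma; it merely cites [DB] (Dorwart--Brown) and states the result. Your self-contained proof via Newton's identities and comparison of the monic polynomials $P$ and $Q$ is the standard classical argument and is complete as written. The only minor redundancy is that once you have established $m < n$ by contradiction, the subsequent degree computation on $P - Q$ is unnecessary (for integers, $m < n$ is already $n \geq m+1$), though it does no harm and in fact yields the sharper information $\deg(P-Q) \leq n-m-1$.
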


	\begin{lemma} \label{solution_of_degree_2}
		For $m = 2$ and $n = 3$ in PTE problem, $[s+1, s-3, s-4]$ and $[s, s-1, s-5]$ is a non-trivial integer solution for any $s \in \mathbb{Z}$.
	\end{lemma}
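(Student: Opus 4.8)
The plan is simply to verify that the two displayed lists satisfy the two defining equations of the PTE problem with $n=3$, $m=2$, and then to confirm that they are genuinely distinct up to permutation, as is required of an actual (as opposed to formal) solution.

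First I would check the two power-sum identities by direct expansion. For the first powers,
$$(s+1)+(s-3)+(s-4) = 3s-6 = s+(s-1)+(s-5),$$
and for the squares,
$$(s+1)^2+(s-3)^2+(s-4)^2 = 3s^2-12s+26 = s^2+(s-1)^2+(s-5)^2,$$
both of which hold for every $s\in\mathbb{Z}$. If one prefers to avoid the expansion, the difference of the two sides in each identity is a polynomial in $s$ of degree at most $2$, so it is enough to verify vanishing at three values of $s$, say $s=0,1,2$; but the expansion is already immediate.

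Finally, for non-triviality I would observe that $s+1$ is the strict maximum of the first list while $s$ is the maximum of the second list, and $s+1\neq s$; hence the two multisets $\{s+1,s-3,s-4\}$ and $\{s,s-1,s-5\}$ are never equal, so the solution is non-trivial for all $s\in\mathbb{Z}$. There is no genuine obstacle in this lemma; the only point worth a moment's care is this last check, since by definition a solution of the PTE problem must consist of lists that remain distinct after any permutation.
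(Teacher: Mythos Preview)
Your proof is correct: the verification of the two power-sum identities and the non-triviality check are exactly what is needed, and your computations are accurate. The paper itself gives no proof of this lemma (it is stated as a direct observation), so your explicit verification is precisely the intended argument.
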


	\section{$\frg$-invariant endomorphism  algebras}	
	
	In this section, we will  present some  investigations about $\frg$-invariant endomorphism algebras which were introduced by Kostant [Ko1].	
	We will also prove that assertions a), b) and c) in the main theorem are equivalent to each other.

	For $\lambda, \nu \in \mathcal{P}_{+} $, consider the associative algebras
	$$R_\lambda(\mathfrak{g}) = (\End V_\lambda\otimes U(\frg))^\frg$$
	and
	$$R_{\lambda,\nu}(\mathfrak{g}) = (\End V_\lambda\otimes \End V_\nu)^\frg.$$

	Kostant [Ko2] showed that there is a $\frg$-submodule $E$ of $U(\frg)$
	such that the multiplication
	$$ Z(\frg)\otimes E\rightarrow U(\frg)$$
	is a $\frg$-module isomorphism.  
	It follows that  $U(\frg)$ and $R_\lambda(\mathfrak{g})$ are free $Z(\frg)$-modules.

	There is a particular kind of elements  in  $R_\lambda(\mathfrak{g})$ and $R_{\lambda,\nu}(\mathfrak{g}) $, which can be thought of generalizations of the quadratic Casimir elements in $Z(\frg)$.
	We are concerned with the classification of  such pairs $(\frg, V_\lambda)$ so that  $R_\lambda(\mathfrak{g})$ and $R_{\lambda,\nu}(\mathfrak{g})$ are generated by such elements.
	To formulate explicitly,  we consider the map
	$$\delta_\lambda: U(\frg) \rightarrow \End V_\lambda\otimes U(\frg)$$
	defined by
	$$ \delta_\lambda(x)= \pi_{\lambda}(x)\otimes 1 +  1\otimes x  \text{ for } x \in \frg,$$
	which is extended to be a homomorphism of associative algebras.

	Let $m$ be the dimension of $\mathfrak{g}$, $x_i$ be a basis of $\frg$ and $x_i^*$ be the dual basis with respect to the Killing form $B$.  The second order Casimir element $C$ is defined by
	$$C=\sum\limits_{i=1}^{m}x_{i}x^{*}_{i}$$
	in $Z(\frg)$.
	It follows that
	$$ \delta_\lambda(C)= \pi_{\lambda}(C)\otimes 1 +\sum\limits_{i=1}^{m}\pi_\lambda(x_{i})\otimes x^{*}_{i}+\sum\limits_{i=1}^{m}\pi_\lambda(x^*_{i})\otimes x_{i} + 1\otimes C.$$
	We set
	$$M_\lambda(C)=\sum\limits_{i=1}^{m}\pi_{\lambda}(x_{i})\otimes x^{*}_{i}.$$
		According to Proposition 11.33 in [C], the second order Casimir element $C$ is independent of the choice of the basis $x_i$. In the same way, we can  check that $M_\lambda(C)$ is also independent of the choice of the basis $x_i$,
	and thus it is equal to $\sum\limits_{i=1}^{m}\pi_\lambda(x^*_{i})\otimes x_{i}$.
	Then
	\begin{equation} \label{M_{lambda}(C)}
		M_{\lambda}(C) = \frac{1}{2}[\delta_\lambda(C) - \pi_{\lambda}(C)\otimes 1 - 1\otimes C].
	\end{equation}
	\medskip
	Consider the map
	$$\delta_{\lambda,\nu}: U(\frg) \rightarrow \End V_\lambda\otimes  \End V_\nu$$
	defined by
	$$ \delta_{\lambda,\nu}(x)= \pi_{\lambda}(x)\otimes 1 +  1\otimes \pi_\nu(x) \text{ for } x\in \frg,$$
	which is also extended to be a homomorphism of associative algebras.  
	We set
	$$M_{\lambda,\nu}(C)=\sum\limits_{i=1}^{m}\pi_{\lambda}(x_{i})\otimes \pi_{\nu} (x^{*}_{i}).$$
	Similarly, we observe
	\begin{equation} \label{M_{lambda, nu}(C)}
		M_{\lambda,\nu}(C) = \frac{1}{2}(\delta_{\lambda,\nu}(C) - \pi_{\lambda}(C)\otimes 1 - 1\otimes \pi_\nu (C)).
	\end{equation}
	Since $C \in Z(\mathfrak{g})$, we have $\delta_\lambda(C) \in R_{\lambda}(\mathfrak{g})$ and $\delta_{\lambda,\nu}(C) \in R_{\lambda, \nu}(\mathfrak{g})$.
	By \eqref{M_{lambda}(C)} and \eqref{M_{lambda, nu}(C)}, we also get $M_\lambda(C) \in R_{\lambda}(\mathfrak{g})$ and $M_{\lambda,\nu}(C) \in R_{\lambda,\nu}(\mathfrak{g})$. 
	For convenience, these  matrices are called  M-type matrices.

	Assume $\mathscr{P}(\mathfrak{h}^{*})$ is the polynomial algebra on $\mathfrak{h}^{*}$.  Recall that  $S(\mathfrak{h})$ is the symmetric algebra on $\mathfrak{h}$ and $U(\mathfrak{h})$ is the universal enveloping algebra of $\mathfrak{h}$.
	Let $\lambda_{1}, \dots, \lambda_{k}$ be the distinct weights of $V_{\lambda}$.
	For each $i \in \{1, \dots, k\}$, the correspondence 
	$$\mathfrak{h}^{*} \to \mathbb{C}$$
	$$\nu \mapsto f_{C,i}(\nu) = \frac{1}{2}(\chi_{\nu +\lambda_{i}}(C) - \chi_{\lambda}(C) - \chi_{\nu}(C))$$ 
	determines a polynomial function on $\mathfrak{h}^{*}$. 
    We have $f_{C, i} \in \mathscr{P}(\mathfrak{h}^{*}) \simeq S(\mathfrak{h}) \simeq U(\mathfrak{h})$.

	We recall some basic properties related to  M-type matrices given by Kostant (cf. Theorem 4.9 in [Ko1], Theorem 6 in [G]).
		\begin{thm} 	Let $\{\lambda_1,\cdots , \lambda_k\}$ be the set of distinct weights of $ V_{\lambda}$.		
		There exists a monic polynomial $P_{\lambda}(X)$ of degree $k$ with coefficients in $Z(\frg)$,
		such that $P_{\lambda}(X)$ is the minimal polynomial of $M_\lambda(C)$.
	\end{thm}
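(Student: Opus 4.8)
The plan is to write $P_{\lambda}(X)$ down explicitly in terms of the polynomial functions $f_{C,i}$ introduced above, to verify that it annihilates $M_{\lambda}(C)$ by projecting $R_{\lambda}(\frg)$ onto the finite-dimensional quotients $R_{\lambda,\nu}(\frg)$, and then to check that its degree cannot be lowered.

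The candidate is
\[
P_{\lambda}(X)=\prod_{i=1}^{k}\bigl(X-f_{C,i}\bigr),
\]
which is monic of degree $k$, with coefficients the elementary symmetric functions of $f_{C,1},\dots,f_{C,k}\in\mathscr{P}(\frh^{*})$. The first thing to check is that these coefficients lie in $Z(\frg)$; under the identification $Z(\frg)\simeq U(\frh)^{\widetilde{W}}$ this amounts to invariance under the dot action $\sigma\cdot\nu=\sigma(\nu+\delta)-\delta$. This follows from a short computation with $\chi_{\mu}(C)=(\mu+\delta,\mu+\delta)-(\delta,\delta)$: one gets $\chi_{\sigma\cdot\nu}(C)=\chi_{\nu}(C)$ and $\chi_{\sigma\cdot\nu+\lambda_{i}}(C)=\chi_{\nu+\sigma^{-1}\lambda_{i}}(C)$, hence $f_{C,i}(\sigma\cdot\nu)=f_{C,w_{\sigma}(i)}(\nu)$, where $w_{\sigma}$ is the permutation of $\{1,\dots,k\}$ induced by $\lambda_{i}\mapsto\sigma^{-1}\lambda_{i}$ (here $\Pi(V_{\lambda})$ being $W$-stable is used). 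So $\widetilde{W}$ permutes $\{f_{C,i}\}$, its symmetric functions are $\widetilde{W}$-invariant, and $P_{\lambda}(X)\in Z(\frg)[X]$.

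Next I would prove $P_{\lambda}(M_{\lambda}(C))=0$ in $R_{\lambda}(\frg)$. For each $\nu\in\mathcal{P}_{+}$ there is an algebra homomorphism $R_{\lambda}(\frg)\to R_{\lambda,\nu}(\frg)$, $T\mapsto(\id\otimes\pi_{\nu})(T)$, carrying $M_{\lambda}(C)$ to $M_{\lambda,\nu}(C)$ and $z\in Z(\frg)$ to $\chi_{\nu}(z)\,\id$; since $\bigcap_{\nu}\ker\pi_{\nu}=0$ in $U(\frg)$, these maps are jointly injective, so it suffices to show $P_{\lambda}^{(\nu)}(M_{\lambda,\nu}(C))=0$ on $V_{\lambda}\otimes V_{\nu}$ for every $\nu$, where $P_{\lambda}^{(\nu)}(X)=\prod_{i}\bigl(X-f_{C,i}(\nu)\bigr)$ is the evaluation of $P_{\lambda}$ at $\nu$. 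But $V_{\lambda}\otimes V_{\nu}$ is completely reducible, and by Kostant's theorem every irreducible constituent has infinitesimal character $\chi_{\nu+\lambda_{i}}$ for some $i$; hence $\delta_{\lambda,\nu}(C)$ is semisimple with spectrum contained in $\{\chi_{\nu+\lambda_{i}}(C)\}$, and by the identity $M_{\lambda,\nu}(C)=\frac{1}{2}\bigl(\delta_{\lambda,\nu}(C)-\pi_{\lambda}(C)\otimes1-1\otimes\pi_{\nu}(C)\bigr)$ the operator $M_{\lambda,\nu}(C)$ is semisimple with spectrum contained in $\{f_{C,i}(\nu)\}$. Thus $P_{\lambda}^{(\nu)}$ kills $M_{\lambda,\nu}(C)$, and therefore $P_{\lambda}(M_{\lambda}(C))=0$.

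Finally, for minimality I would exhibit a single weight $\nu$ at which $M_{\lambda,\nu}(C)$ has $k$ distinct eigenvalues. Taking $\nu$ deep in the dominant chamber, the weights $\nu+\lambda_{1},\dots,\nu+\lambda_{k}$ are distinct, regular and dominant and lie in pairwise distinct dot-orbits, and each $V_{\nu+\lambda_{i}}$ occurs in $V_{\lambda}\otimes V_{\nu}$, so $M_{\lambda,\nu}(C)$ has exactly the $k$ distinct eigenvalues $f_{C,i}(\nu)$. If some $Q\in Z(\frg)[X]$ of degree $<k$ annihilated $M_{\lambda}(C)$, its coefficients — not all zero as functions on $\frh^{*}$ — would be nonzero at a generic such $\nu$, yielding a nonzero polynomial of degree $<k$ vanishing on an operator with $k$ distinct eigenvalues, a contradiction; hence $1,M_{\lambda}(C),\dots,M_{\lambda}(C)^{k-1}$ are $Z(\frg)$-independent, and the monic degree-$k$ annihilator $P_{\lambda}(X)$ is the minimal polynomial of $M_{\lambda}(C)$. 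The step I expect to require the most care is establishing that each $V_{\nu+\lambda_{i}}$ really does occur in $V_{\lambda}\otimes V_{\nu}$ for $\nu$ sufficiently dominant (the stable decomposition $V_{\lambda}\otimes V_{\nu}\cong\bigoplus_{i}V_{\nu+\lambda_{i}}$), since that is what forces the degree to equal exactly $k$; keeping the identifications $Z(\frg)\simeq U(\frh)^{\widetilde{W}}\simeq\mathscr{P}(\frh^{*})^{\widetilde{W}}$ and the evaluation $z\mapsto\chi_{\nu}(z)$ compatible throughout is the other point to watch.
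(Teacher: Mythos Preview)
The paper does not actually prove this theorem; it is quoted from Kostant [Ko1, Theorem~4.9] and Gould [G, Theorem~6], with only the subsequent proposition (that symmetric functions of the $f_{C,i}$ lie in $U(\frh)^{\widetilde{W}}$, i.e.\ your first step) supplied in the text. So there is no in-paper argument to compare against, and your route---defining $P_\lambda(X)=\prod_i(X-f_{C,i})$, verifying annihilation by projecting to every $R_{\lambda,\nu}(\frg)$ via the jointly faithful maps $\id\otimes\pi_\nu$, and checking minimality at a well-chosen $\nu$---is a legitimate self-contained proof.

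There is, however, one genuine gap in your minimality step. You argue that for $\nu$ deep in the dominant chamber the $\nu+\lambda_i$ lie in pairwise distinct dot-orbits, and conclude that $M_{\lambda,\nu}(C)$ has $k$ \emph{distinct} eigenvalues $f_{C,i}(\nu)$. But distinct dot-orbits only gives $\chi_{\nu+\lambda_i}\neq\chi_{\nu+\lambda_j}$ as full central characters; it does not force their values on the single element $C$ to differ. Indeed, the entire thrust of Section~5 of the paper is to exhibit, for each multiplicity free but not strongly multiplicity free $V_\lambda$, dominant $\nu$ and distinct $\lambda_i\neq\lambda_j$ with $f_{C,i}(\nu)=f_{C,j}(\nu)$. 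The repair is short: from $\chi_\mu(C)=(\mu+\delta,\mu+\delta)-(\delta,\delta)$ one gets
\[
f_{C,i}(\nu)-f_{C,j}(\nu)=(\lambda_i-\lambda_j,\,\nu+\delta)+\tfrac12\bigl((\lambda_i,\lambda_i)-(\lambda_j,\lambda_j)\bigr),
\]
a nonconstant affine function of $\nu$ whenever $\lambda_i\neq\lambda_j$. Thus the coincidence locus is a finite union of affine hyperplanes, and one can choose $\nu$ in the sufficiently dominant lattice cone avoiding them (and, in the last paragraph, also avoiding the zero set of the coefficients of $Q$). The step you flagged as delicate---that each $V_{\nu+\lambda_i}$ occurs in the stable range---is the routine part (Brauer--Klimyk); it is the distinctness of the Casimir values that needs the extra sentence.
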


	\begin{prop} Any symmetric combinations of $f_{C,1}(\nu),\cdots, f_{C,k}(\nu)$ determines an invariant polynomial function in $U( \mathfrak{h})^{\widetilde{W}}$. 		
	\end{prop}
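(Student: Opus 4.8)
The plan is to make the functions $f_{C,i}$ completely explicit and then to read off their behaviour under $\widetilde{W}$ by a direct computation. First I would use $\chi_{\mu}(C)=(\mu,\mu+2\delta)$ to expand
\[
\chi_{\nu+\lambda_{i}}(C)-\chi_{\nu}(C)=2(\nu,\lambda_{i})+(\lambda_{i},\lambda_{i}+2\delta)=2(\nu,\lambda_{i})+\chi_{\lambda_{i}}(C),
\]
which shows that
\[
f_{C,i}(\nu)=(\nu,\lambda_{i})+c_{i},\qquad c_{i}:=\tfrac{1}{2}\bigl(\chi_{\lambda_{i}}(C)-\chi_{\lambda}(C)\bigr)\in\mathbb{C}.
\]
Thus $f_{C,i}$ is the element of the degree $\leq 1$ part $\mathbb{C}\oplus\mathfrak{h}$ of $\mathscr{P}(\mathfrak{h}^{*})\simeq S(\mathfrak{h})\simeq U(\mathfrak{h})$ whose linear part is the vector of $\mathfrak{h}$ corresponding to $\lambda_{i}$ under the form, translated by the scalar $c_{i}$.

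Next I would analyse the $\widetilde{W}$-action on the collection $\{f_{C,1},\dots,f_{C,k}\}$. Since weight multiplicities are $W$-invariant, $W$ permutes the set $\{\lambda_{1},\dots,\lambda_{k}\}=\Pi(V_{\lambda})$; for $\sigma\in W$ let $\sigma(i)\in\{1,\dots,k\}$ denote the index with $\sigma\lambda_{i}=\lambda_{\sigma(i)}$. Recalling that $\widetilde{\sigma}$ sends a polynomial function $p$ to $\nu\mapsto p\bigl(\sigma^{-1}(\nu+\delta)-\delta\bigr)$ and that the form on $\mathfrak{h}^{*}$ is $W$-invariant, one computes
\[
(\widetilde{\sigma}\cdot f_{C,i})(\nu)=\bigl(\sigma^{-1}(\nu+\delta)-\delta,\ \lambda_{i}\bigr)+c_{i}=(\nu,\lambda_{\sigma(i)})+(\delta,\lambda_{\sigma(i)})-(\delta,\lambda_{i})+c_{i}.
\]
Since $\sigma$ is orthogonal it preserves $(\lambda_{i},\lambda_{i})$, so the scalars satisfy $c_{\sigma(i)}-c_{i}=\tfrac{1}{2}\bigl(\chi_{\lambda_{\sigma(i)}}(C)-\chi_{\lambda_{i}}(C)\bigr)=(\delta,\lambda_{\sigma(i)})-(\delta,\lambda_{i})$, and feeding this back gives $\widetilde{\sigma}\cdot f_{C,i}=f_{C,\sigma(i)}$. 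Hence $\widetilde{W}$ acts on $\{f_{C,1},\dots,f_{C,k}\}$ purely by permuting indices.

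Granting this, the conclusion is immediate: for any symmetric polynomial $Q$ in $k$ variables and any $\sigma\in W$ we have $\widetilde{\sigma}\cdot Q(f_{C,1},\dots,f_{C,k})=Q(f_{C,\sigma(1)},\dots,f_{C,\sigma(k)})=Q(f_{C,1},\dots,f_{C,k})$, so $Q(f_{C,1},\dots,f_{C,k})\in U(\mathfrak{h})^{\widetilde{W}}$. The step most likely to need care is the middle one: the $\delta$-translation built into the definition of $\widetilde{W}$ has to be absorbed exactly by the scalar $c_{i}$ hidden inside $f_{C,i}$, and checking this cancellation is the whole point — it is the reason it is the $\delta$-shifted Weyl group, rather than $W$, that fixes these symmetric functions. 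As a by-product, combined with the preceding theorem this identifies the elementary symmetric functions of $f_{C,1},\dots,f_{C,k}$, up to sign, with the images under the isomorphism $\psi\colon Z(\frg)\to U(\mathfrak{h})^{\widetilde{W}}$ of the coefficients of the minimal polynomial $P_{\lambda}(X)\in Z(\frg)[X]$ of $M_{\lambda}(C)$.
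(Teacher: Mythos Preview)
Your argument is correct and reaches the same key identity as the paper, namely that $\widetilde{\sigma}\cdot f_{C,i}=f_{C,\sigma(i)}$ for each $\sigma\in W$, after which the conclusion for symmetric combinations is immediate.

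The route, however, is genuinely different in flavour. You make $f_{C,i}$ explicit via $\chi_{\mu}(C)=(\mu,\mu+2\delta)$, obtaining $f_{C,i}(\nu)=(\nu,\lambda_{i})+c_{i}$, and then verify by a bare-hands computation that the $\delta$-shift built into $\widetilde{W}$ is absorbed exactly by the constants $c_{i}$. The paper never expands $\chi_{\mu}(C)$: it works entirely at the level of infinitesimal characters, invoking Kostant's facts that $\chi_{\widetilde{\sigma}^{-1}(\nu)}=\chi_{\nu}$ and $\widetilde{\sigma}(\nu_{1}+\nu_{2})=\widetilde{\sigma}(\nu_{1})+\sigma(\nu_{2})$ to rewrite $f_{C,i}(\widetilde{\sigma}^{-1}(\nu))$ as $\tfrac{1}{2}\bigl(\chi_{\nu+\sigma(\lambda_{i})}(C)-\chi_{\lambda}(C)-\chi_{\nu}(C)\bigr)$. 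The paper's argument therefore uses nothing special about $C$ and would apply verbatim to the analogous functions attached to any $z\in Z(\frg)$; your computation is specific to the quadratic Casimir but has the advantage of being completely self-contained and of making the mechanism of the $\delta$-cancellation transparent. Your closing remark linking the elementary symmetric functions of the $f_{C,i}$ to the coefficients of $P_{\lambda}(X)$ under $\psi$ is a nice bonus that the paper does not spell out.
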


	\begin{proof}
		It follows from Kostant (cf. Theorem 2.5 in [Ko1]) that for $\nu_{1}, \nu_{2} \in \mathfrak{h}^{*}$ infinitesimal characters $\chi_{\nu_{1}} = \chi_{\nu_{2} }$ if and only if $\nu_{1}$ and $\nu_{2} $ are translated Weyl group $\tilde{W}$-conjugate.
		Besides, by Lemma 4.9 in [Ko1] we have $\tilde{\sigma}(\nu_{1} + \nu_{2}) = \tilde{\sigma}(\nu_{1}) + \sigma(\nu_{2})$ for $\nu_{1}, \nu_{2} \in \mathfrak{h}^{*}$ and $\sigma \in W$.
		The translated Weyl group $\widetilde{W}$ acts on the  polynomial function $f_{C,i}$ by the following way:
		$$\tilde{\sigma}f_{C,i}(\nu) = f_{C,i}(\tilde{\sigma}^{-1}(\nu)) = f_{C,i}({\sigma}^{-1}(\nu+\delta)-\delta)$$
		$$=\frac{1}{2}(\chi_{\tilde{\sigma}^{-1}(\nu) + \lambda_{i}}(C) - \chi_{\lambda}(C) - \chi_{\tilde{\sigma}^{-1}(\nu)}(C))$$
		$$=\frac{1}{2}(\chi_{\nu +\sigma(\lambda_{i})}(C)-\chi_{\lambda}(C)-\chi_{\nu}(C)).$$
		
		\medskip
		
		Since the Weyl group permutes the distinct weight of  $V_{\lambda}$, there exists an permutation $\varepsilon$ of the symmetric group $S_{k}$ such that $\sigma(\lambda_i)=\lambda_{\varepsilon(i)}$.
		Therefore, $$\tilde{\sigma}f_{C,i}(\nu)=f_{C,\varepsilon(i)}(\nu).$$
		Then, any symmetric combinations of  $f_{C,1}(\nu),\cdots, f_{C,k}(\nu)$ determines a invariant polynomial function in $U( \mathfrak{h})^{\widetilde{W}}$. 
	\end{proof}

	It follows from Gould [G] that the eigenvalues of  $M_{\lambda,\nu}(C)$ on the tensor module $ V_\lambda\otimes  V_\nu$ are of the form $f_{C,i}(\nu)$ for $\lambda_{i} \in \Pi(V_{\lambda})$ and $\nu + \lambda_{i} \in \mathcal{P}_{+}$.

	\begin{thm} \label{a = b}
		Assume $V_\lambda$ is an irreducible multiplicity free $\mathfrak{g}$-module. 
		Then $V_{\lambda}$ is strongly multiplicity free if and only if semisimple operator $M_{\lambda,\nu} (C)$ has distinct eigenvalues on distinct summands of $V_\lambda \otimes V_{\nu}$ for each dominant integral weight $\nu$.
		
	\end{thm}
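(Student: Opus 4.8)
The plan is to prove the two implications separately: ``$V_{\lambda}$ strongly multiplicity free $\Rightarrow$ $M_{\lambda,\nu}(C)$ separates the summands of $V_{\lambda}\otimes V_{\nu}$ for every $\nu$'' directly, and its converse by contraposition. For the forward direction, fix $\nu\in\mathcal{P}_{+}$. Since $V_{\lambda}$ is multiplicity free so is $V_{\lambda}\otimes V_{\nu}=\bigoplus_{a}V_{\mu_{a}}$, the $\mu_{a}$ pairwise distinct, and $M_{\lambda,\nu}(C)$ acts on $V_{\mu_{a}}$ by the scalar $\tfrac12(\chi_{\mu_{a}}(C)-\chi_{\lambda}(C)-\chi_{\nu}(C))$ by \eqref{M_{lambda, nu}(C)}. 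The key point is that every summand has the form $\mu_{a}=\nu+\lambda_{i(a)}$ with $\lambda_{i(a)}\in\Pi(V_{\lambda})$ and $\nu+\lambda_{i(a)}\in\mathcal{P}_{+}$: by Kostant's theorem recalled in the introduction, $\chi_{\mu_{a}}=\chi_{\nu+\lambda_{k}}$ for some $\lambda_{k}\in\Pi(V_{\lambda})$, and taking $\lambda_{i(a)}$ maximal in the weight order among all such $\lambda_{k}$, if $\nu+\lambda_{i(a)}+\delta$ were not dominant then the offending simple reflection, together with the fact that $\alpha$-strings of weights are unbroken, would produce a strictly larger weight of $V_{\lambda}$ with the same infinitesimal character, contradicting maximality. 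Hence the scalar on $V_{\mu_{a}}$ equals $f_{C,i(a)}(\nu)$, and distinct $\mu_{a}$ correspond to distinct $\lambda_{i(a)}$; were two of these scalars equal we would get $\chi_{\nu+\lambda_{i(a)}}(C)=\chi_{\nu+\lambda_{i(b)}}(C)$ with $\nu+\lambda_{i(a)},\nu+\lambda_{i(b)}\in\mathcal{P}_{+}$, so Proposition~\ref{smf_has_distinct_eigs} would force $\lambda_{i(a)}=\lambda_{i(b)}$, hence $\mu_{a}=\mu_{b}$, a contradiction.

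For the converse, assume $V_{\lambda}$ is multiplicity free but not strongly so, and pick incomparable $\lambda_{i},\lambda_{j}\in\Pi(V_{\lambda})$; set $\beta=\lambda_{i}-\lambda_{j}$, a nonzero root-lattice element whose expansion in the simple roots has coefficients of both signs. For $\nu$ deep inside the dominant cone, $V_{\lambda}\otimes V_{\nu}=\bigoplus_{k}V_{\nu+\lambda_{k}}$ with distinct summands and $M_{\lambda,\nu}(C)$ acting on $V_{\nu+\lambda_{k}}$ by $f_{C,k}(\nu)$; the two distinct summands $V_{\nu+\lambda_{i}},V_{\nu+\lambda_{j}}$ acquire equal scalars exactly when $|\nu+\lambda_{i}+\delta|^{2}=|\nu+\lambda_{j}+\delta|^{2}$, i.e.\ when $\nu$ lies on the affine hyperplane $H=\{\nu:(2\nu+\lambda_{i}+\lambda_{j}+2\delta,\beta)=0\}$, whose normal direction is $\beta$. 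Because $\beta$ has simple-root coefficients of both signs, $\nu\mapsto(\nu,\beta)$ is unbounded above and below over the deep part of the dominant cone, so $H$ meets that open, unbounded region; a short lattice computation shows the constant defining $H$ is attained by $(\cdot,\beta)$ on $\mathcal{P}$, whence $H\cap\mathcal{P}$ is a full-rank affine sublattice of $H$ and so also meets the deep region. For such $\nu$, $M_{\lambda,\nu}(C)$ has equal eigenvalues on the distinct summands $V_{\nu+\lambda_{i}}$ and $V_{\nu+\lambda_{j}}$, so the eigenvalue condition fails.

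The step needing the most care is the last one: exhibiting the colliding $\nu$ inside the weight lattice $\mathcal{P}$ rather than merely in $\frh^{*}$. I would carry this out along the finite list of multiplicity free but not strongly multiplicity free modules recalled in Section~2, writing an explicit incomparable pair and an explicit $\nu$ in each case; for the type $A$ modules the condition $|\nu+\lambda_{i}+\delta|^{2}=|\nu+\lambda_{j}+\delta|^{2}$ becomes a coincidence of power sums of degrees $1$ and $2$---a Prouhet--Tarry--Escott instance---which is precisely why Lemma~\ref{solution_of_degree_2} records a one-parameter family of degree-$2$, size-$3$ solutions and why Lemma~\ref{PTE_exist} is relevant. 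The remaining ingredients---multiplicity freeness of $V_{\lambda}\otimes V_{\nu}$, the description of its summands for deep $\nu$, and the fact that $M_{\lambda,\nu}(C)$ acts by $f_{C,k}(\nu)$ on $V_{\nu+\lambda_{k}}$---are standard and were recalled in the text preceding the statement.
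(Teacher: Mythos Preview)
Your proposal is correct and follows essentially the same route as the paper. The forward direction is identical: both you and the paper invoke Proposition~\ref{smf_has_distinct_eigs} together with the fact (due to Gould, cited just before the theorem) that the eigenvalues of $M_{\lambda,\nu}(C)$ are the numbers $f_{C,i}(\nu)$ with $\nu+\lambda_i\in\mathcal{P}_+$; your extra paragraph sketching why the summands are of the form $V_{\nu+\lambda_i}$ is a rederivation of a known result the paper simply quotes. For the converse, your hyperplane framing is a nice way to organise the search, but the claim that ``a short lattice computation'' produces an integral $\nu$ on $H$ deep in the chamber is not a general argument---the attainability of the required value of $(\nu,\beta)$ on $\mathcal{P}$ depends on root-length data and on the specific incomparable pair. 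You correctly recognise this and defer to the finite classification, which is precisely what the paper does in Section~5 (Propositions~5.1, 5.3, 5.5, 5.9, 5.11, 5.12, 5.16, 5.18), including the PTE analysis for type~$A$ that you anticipated.
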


	\begin{proof}By Proposition \ref{smf_has_distinct_eigs} and the formula of the eigenvalues of  $M_{\lambda,\nu}(C)$ on  the tensor module $ V_\lambda\otimes  V_\nu$, we see the necessary condition is valid.
	We will leave the detailed calculations for the proof of the sufficient condition  in  Section 5 (cf. Proposition 5.1, 5.3, 5.5, 5.9, 5.11, 5.12, 5.16, 5.18).
	 
	\end{proof}


	\begin{thm} 
		Assume $V_\lambda$ is an irreducible multiplicity free $\mathfrak{g}$-module and $\nu \in \mathcal{P}_{+} $.
		Then semisimple operator $M_{\lambda,\nu}(C)$ has distinct eigenvalues on each summand of $V_\lambda \otimes V_\nu$ if and only if $1,M_{\lambda, \nu} (C), \ldots, M_{\lambda, \nu}(C)^{s-1}$ form a basis of  $R_{\lambda, \nu}(\mathfrak{g})$ over $\mathbb{C}$, where $s$ is the number of irreducible components of $V_{\lambda} \otimes V_{\nu}$.
	\end{thm}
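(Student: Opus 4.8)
The plan is to exploit commutativity of $R_{\lambda,\nu}(\mathfrak{g})$ to realise it as an algebra of functions on the set of irreducible summands of $V_\lambda\otimes V_\nu$, and then to recognise that a single diagonalisable element generates such an algebra exactly when it separates those summands. So first I would pin down the structure of $R_{\lambda,\nu}(\mathfrak{g})=\operatorname{End}_{\mathfrak{g}}(V_\lambda\otimes V_\nu)$. Since $V_\lambda$ is multiplicity free, this algebra is commutative (as recalled in the introduction, cf. [P], [Ki1], [Ki2]), and since $V_\lambda\otimes V_\nu$ is a finite-dimensional semisimple $\mathfrak{g}$-module, the endomorphism algebra is semisimple. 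A commutative semisimple algebra over the algebraically closed field $\mathbb{C}$ is a product of copies of $\mathbb{C}$, so $R_{\lambda,\nu}(\mathfrak{g})\cong\mathbb{C}^{s}$; the factors are indexed by the distinct irreducible summands $V_{\mu_1},\dots,V_{\mu_s}$ of $V_\lambda\otimes V_\nu$ (in particular this tensor product is itself multiplicity free, so $s$ is simultaneously the number of irreducible components and the dimension $\dim_{\mathbb{C}}R_{\lambda,\nu}(\mathfrak{g})$), and the isomorphism sends $T$ to the tuple $(c_1(T),\dots,c_s(T))$ of scalars by which $T$ acts on $V_{\mu_1},\dots,V_{\mu_s}$ (Schur's lemma).

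Next I would record that $M_{\lambda,\nu}(C)\in R_{\lambda,\nu}(\mathfrak{g})$ corresponds, under this isomorphism, to the tuple of its eigenvalues: by Gould's formula recalled above, it acts on $V_{\mu_a}=V_{\nu+\lambda_{i_a}}$ as the scalar $\alpha_a=f_{C,i_a}(\nu)$. Consequently its minimal polynomial is $\prod_{\alpha}(X-\alpha)$, the product running over the \emph{set} of values $\{\alpha_1,\dots,\alpha_s\}$; this polynomial is squarefree, consistent with $M_{\lambda,\nu}(C)$ being semisimple, and its degree equals the number $d$ of distinct $\alpha_a$. Hence $\mathbb{C}[M_{\lambda,\nu}(C)]$, which is precisely the span of $1,M_{\lambda,\nu}(C),\dots,M_{\lambda,\nu}(C)^{s-1}$ inside $R_{\lambda,\nu}(\mathfrak{g})$, has dimension $d$, and $d\le s$ always.

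The theorem then follows by comparing dimensions. If $M_{\lambda,\nu}(C)$ has distinct eigenvalues on distinct summands, then $d=s=\dim_{\mathbb{C}}R_{\lambda,\nu}(\mathfrak{g})$, so the $s$ spanning elements $1,M_{\lambda,\nu}(C),\dots,M_{\lambda,\nu}(C)^{s-1}$ must be a basis; conversely, if these $s$ elements form a basis they are in particular linearly independent, which forces the minimal polynomial of $M_{\lambda,\nu}(C)$ to have degree at least $s$, hence $d=s$, i.e.\ the $\alpha_a$ are pairwise distinct. Equivalently, writing $e_1,\dots,e_s$ for the primitive idempotents of $R_{\lambda,\nu}(\mathfrak{g})$, one has $M_{\lambda,\nu}(C)^{b}=\sum_{a}\alpha_a^{\,b}e_a$, so the matrix expressing $(M_{\lambda,\nu}(C)^{b})_{0\le b\le s-1}$ in the basis $(e_a)$ is the Vandermonde matrix $(\alpha_a^{\,b})$, which is invertible precisely when the $\alpha_a$ are distinct.

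I expect no essential difficulty in carrying this out; the only points needing care are already available in the excerpt — that commutativity of $R_{\lambda,\nu}(\mathfrak{g})$ upgrades to the statement that $V_\lambda\otimes V_\nu$ is multiplicity free, so that the integer $s$ appearing in the theorem equals $\dim_{\mathbb{C}}R_{\lambda,\nu}(\mathfrak{g})$, and that $M_{\lambda,\nu}(C)$ is genuinely semisimple, so that distinctness of its eigenvalues is equivalent to its minimal polynomial attaining the full degree $s$.
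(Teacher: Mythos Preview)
Your proposal is correct and follows essentially the same route as the paper: both identify $R_{\lambda,\nu}(\mathfrak{g})\cong\End_{\mathfrak{g}}(V_\lambda\otimes V_\nu)$ as an $s$-dimensional algebra via Schur's lemma (using that $V_\lambda\otimes V_\nu$ is multiplicity free), and then reduce the equivalence to a Vandermonde-type argument on the eigenvalues $f_{C,i}(\nu)$. The paper phrases the forward direction via explicit Lagrange-interpolation projectors $P_{\nu+\lambda_i}$ rather than your minimal-polynomial dimension count, but these are two packagings of the same computation.
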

	
	\begin{proof} 
		Let $\Pi(V_{\lambda})=\{\lambda_1,\cdots , \lambda_k\}$ be the weight set of multiplicity free $\mathfrak{g}$-module $ V_{\lambda}$.		
		From Kostant (cf. Theorem 4.3 in [Ko1]), we have the $\mathfrak{g}$-module decomposition
		$$V_{\lambda}\otimes V_{\nu} = \sum_{\substack{i \in \{1, \cdots, k\} \\ \nu + \lambda_{i} \in \mathcal{P}_{+}}} n_{i} V_{\nu+\lambda_{i}},$$			
		where $n_{i} =0 $ or $1$.
		Let the index set $I = \{i \in \{ 1, \cdots, k \} | \nu + \lambda_{i} \in \mathcal{P}_{+} \text{ and } n_i=1\}$ and denote the number of elements of $I$ by $s$.
		
		If the semisimple operator $M_{\lambda,\nu}(C)$ has distinct  eigenvalues  on distinct summands of $ V_\lambda\otimes  V_\nu$, we can construct the following projection operators $P_{\nu + \lambda_{i}} : V_{\lambda} \otimes V_{\nu} \to V_{\nu + \lambda_{i}}$ by:
		\begin{equation} \label{P_{nu+lambda_{i}}}
			P_{\nu+\lambda_{i}} = \prod_{ j \in I \setminus \{i\}} \frac{M_{\lambda,\nu}(C) - f_{C,j}(\nu)}{f_{C,i}(\nu) - f_{C,j}(\nu)},
		\end{equation}
		for $i \in I$.
		By Schur's lemma and Equation \eqref{P_{nu+lambda_{i}}}, the $\mathfrak{g}$-invariant endomorphism algebra 
		$$ R_{\lambda,\nu}(\mathfrak{g}) = (\End V_\lambda \otimes \End V_\nu)^{\mathfrak{g}} \simeq \End_{\mathfrak{g}} (V_{\lambda} \otimes V_{\nu}) = \mathbb{C}_{i\in I}\{P_{\nu+\lambda_{i}}\}$$
		is a polynomial algebra generated by $M_{\lambda,\nu}(C)$ and $1,M_{\lambda, \nu} (C), \ldots, M_{\lambda, \nu}(C)^{s-1}$ form a basis of $R_{\lambda, \nu}(\mathfrak{g})$ over $\mathbb{C}$.
		
	    Assume $1, M_{\lambda, \nu} (C), \ldots, M_{\lambda, \nu}(C)^{s-1}$ form a basis of $R_{\lambda, \nu}(\mathfrak{g})$ over $\mathbb{C}$. Since  $P_{\nu + \lambda_{i}} \in  R_{\lambda, \nu}(\mathfrak{g})$ for each $i \in I$,  there exists certain non-zero polynomials $g_{i}(x) = a_{i, 0} + a_{i, 1}x + \cdots + a_{i, s-1}x^{s-1} \in \mathbb{C}[x]$ such that $P_{\nu + \lambda_{i}} =g_{i}(M_{\lambda, \nu}(C))$.
		Then we get
		$$g_{i}(f_{C, j}(\nu)) = 0 \ \text{ for } j \in I \setminus \{i\}, \quad  g_{i}(f_{C, i}(\nu)) = 1,$$
		where $f_{C, i}(\nu), i \in I$ are the eigenvalues of $M_{\lambda, \nu}(C)$.
	    Write the  index set $I = \{i_{1}, i_{2}, \dots, i_{s}\}$.
		Then the identity matrix $ I_{s}$ of order $s$ is equal to
		$$
		\begin{pmatrix}
			a_{i_{1}, 0} & a_{i_{1}, 1} & \cdots & a_{i_{1}, s-1} \\
			a_{i_{2}, 0} & a_{i_{2}, 1} & \cdots & a_{i_{2}, s-1} \\
			a_{i_{3}, 0} & a_{i_{3}, 1} & \cdots & a_{i_{3}, s-1} \\
			\vdots & \vdots & \ddots & \vdots \\
			a_{i_{s}, 0} & a_{i_{s}, 1} & \cdots & a_{i_{s}, s-1} 
		\end{pmatrix}
		\begin{pmatrix}
			1 & 1 & \cdots & 1 \\
			f_{C, i_{1}}(\nu) & f_{C, i_{2}}(\nu) & \cdots & f_{C, i_{s}}(\nu) \\
			f_{C, i_{1}}(\nu)^{2} & f_{C, i_{2}}(\nu)^{2} & \cdots & f_{C, i_{s}}(\nu)^{2} \\
			\vdots & \vdots & \ddots & \vdots \\
			f_{C, i_{1}}(\nu)^{s-1} & f_{C, i_{2}}(\nu)^{s-1} & \cdots & f_{C, i_{s}}(\nu)^{s-1}
		\end{pmatrix}  
		$$
		Comparing determinants on both sides of the above  equation, it is easy to see that the Vandermonde matrix is invertible and thus all the eigenvalues $f_{C, i}(\nu), i \in I$ of M-type matrix $M_{\lambda, \nu}(C)$ are distinct.
	\end{proof}

	\section{Principal $\mathfrak{sl}_{2}$ subalgebras}
	
	In this section, we will provide a criterion for a multiplicity free module to be  strongly multiplicity free from the view of the structures  restricted to its  principal three-dimensional subalgebra. Furthermore, we will  prove that  conditions a), d), e) and  f) in the main theorem are equivalent to each other.

	Recall that a principal $\frsl_2$ in $\frg$ is a three-dimensional subalgebra (written briefly as TDS) spanned by a triple $\{X,H,Y\}$ in $\frg$ such that
	$$[H,X]=2X, \ [H,Y]=-2Y, \ [X,Y]=H $$
	and the orbit of $X$ under the adjoint group of $\frg$ is the principal nilpotent orbit.

	Define $$h^{0}=\sum_{\alpha \in \phi^{+}}h_{\alpha},$$where  $h_{\alpha}$ is the coroot of $\alpha$,
	i. e. $h_{\alpha}$ is uniquely determined by the properties $h_{\alpha} \in [\mathfrak{g}^{\alpha}, \mathfrak{g}^{-\alpha}]$ and $\alpha(h_{\alpha}) = 2$.
	Let $h_i$ be the coroot of simple root  $\alpha_{i}$. 
	Then $\alpha_{i}(h^{0})=2, \ i=1,\cdots, l$.
	Furthermore, there are positive integers $c_i$ such that $h^{0}=\sum\limits_{i=1}^{l}c_{i}h_{i}$.	
	Let $\{E_i, F_i, H_i\}$ be a TDS with $E_i \in \frg_{\alpha_{i}}$, and 
	$F_i \in \frg_{-\alpha_{i}}$. Define $$e^{0}=\sum\limits_{i=1}^{l}E_{i}, \
	f^{0}= 
	\sum\limits_{i=1}^{l}c_{i}F_{i}.
	$$
	One has the commutation relations
	$$[h^{0}, e^{0}]= 2e^{0}, \
	[h^{0}, f^{0}] = -2f^{0}, \
	[e^{0}, f^{0}] = h^{0}.
	$$
	Then the three-dimensional Lie algebra spanned by $(h^{0}, e^{0}, f^{0})$ is a principal TDS and is denoted by $\mathfrak{a}^{0}$ (cf. Lemma 3.2.16 in [GW]). 
	Since $\mathfrak{a}^{0}$-module $V_{\lambda}$ is semisimple, $V_{\lambda}$ can be decomposed into a direct sum of irreducible $\mathfrak{a}^{0}$-modules. 
	
	For $\lambda \in \mathcal{P}_{+}$, write $\lambda=\sum\limits_{i=1}^{l}a_i\alpha_i$, the height of $\lambda$ is defined to be:
	\begin{equation} \label{height}
		\mbox{ht}(\lambda)=\sum\limits_{i=1}^{l}a_i.
	\end{equation}
	
	\begin{lemma}The $\mathfrak{a}^{0}$-module $V_{\lambda}$ is irreducible if and only if 
		$\dim V_{\lambda}=2\mbox{ht}(\lambda)+1$.
	\end{lemma}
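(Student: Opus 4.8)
The plan is to reduce the equivalence to the elementary weight theory of $\frsl_2$, the only real input being the value of the largest $h^0$-eigenvalue on $V_\lambda$.

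First I would compute that eigenvalue. For any $\mu \in \Pi(V_\lambda)$ the difference $\lambda - \mu$ is a non-negative integer combination $\sum_{i=1}^{l} m_i \alpha_i$ of simple roots, so since $\alpha_i(h^0) = 2$ for all $i$ we get $\mu(h^0) = \lambda(h^0) - 2\sum_{i} m_i \le \lambda(h^0)$, with equality precisely when $\mu = \lambda$. Writing $\lambda = \sum_i a_i \alpha_i$ and again using $\alpha_i(h^0) = 2$ yields $\lambda(h^0) = 2\sum_i a_i = 2\,\mbox{ht}(\lambda)$. Hence $h^0$ acts on $V_\lambda$ with largest eigenvalue $2\,\mbox{ht}(\lambda)$, and this eigenvalue is attained only on the highest weight space of $V_\lambda$, which is one-dimensional; so its multiplicity in $V_\lambda$ is exactly one.

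Next, since $V_\lambda$ is a semisimple $\fra^0$-module, I would write $V_\lambda = \bigoplus_j W_j$ as a direct sum of irreducible $\fra^0$-submodules. Each $W_j$ is an irreducible $\frsl_2$-module, so if $d_j$ denotes its top $h^0$-weight then $\dim W_j = d_j + 1$ and the $h^0$-eigenvalues on $W_j$ are $d_j, d_j - 2, \dots, -d_j$, each with multiplicity one. By the previous step $d_j \le 2\,\mbox{ht}(\lambda)$ for every $j$, and since the eigenvalue $2\,\mbox{ht}(\lambda)$ has multiplicity one in $V_\lambda$, exactly one summand, say $W_{j_0}$, has this eigenvalue occurring; this forces $d_{j_0} = 2\,\mbox{ht}(\lambda)$, hence $\dim W_{j_0} = 2\,\mbox{ht}(\lambda) + 1$.

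Finally I would conclude in both directions. If $V_\lambda$ is irreducible as an $\fra^0$-module then $V_\lambda = W_{j_0}$, so $\dim V_\lambda = 2\,\mbox{ht}(\lambda) + 1$. Conversely, if $\dim V_\lambda = 2\,\mbox{ht}(\lambda) + 1 = \dim W_{j_0}$, then the submodule $W_{j_0}$ already has full dimension, so $V_\lambda = W_{j_0}$ is $\fra^0$-irreducible. I expect the main (though modest) obstacle to be the first step: correctly identifying the top $h^0$-eigenvalue with $2\,\mbox{ht}(\lambda)$ and verifying that its multiplicity is one; everything afterwards is the standard structure of finite-dimensional $\frsl_2$-modules together with complete reducibility of $V_\lambda$ over $\fra^0$.
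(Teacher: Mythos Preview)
Your proof is correct and follows essentially the same line as the paper's. Both identify $\lambda(h^{0})=2\,\mbox{ht}(\lambda)$ and produce an irreducible $\fra^{0}$-submodule of dimension $2\,\mbox{ht}(\lambda)+1$; the paper obtains it as the cyclic $\fra^{0}$-submodule generated by a highest weight vector, while you locate it as the unique summand in the semisimple decomposition carrying the top $h^{0}$-eigenvalue, but this is only a cosmetic difference.
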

	
	\begin{proof} First we note that   $$\lambda(h^{0})=
		\sum\limits_{i=1}^{l}a_{i}\alpha_i(h^{0})=2\mbox{ht}(\lambda) $$ is a positive integer.  
		For a nonzero vector $v_0$ of weight $\lambda$ and let $V^0$ be the $\mathfrak{a}^{0}$ cyclic subspace generated by $v_0$. Let $\sigma$ be the representation of $\mathfrak{a}^{0}$ on $V^0$ obtained from the restriction of 
		$\pi_{\lambda}$. Then $(\sigma,V^0)$ is an irreducible representation  of dimension $\lambda(h^{0})+1$. It follows that  $\mathfrak{a}^{0}$-module $V_{\lambda}$ is irreducible if and only if $\dim V_{\lambda}=\lambda(h^{0})+1=2\mbox{ht}(\lambda)+1$.
	\end{proof}

	\begin{thm}
		Assume  $\mathfrak{g}$-module $V_{\lambda}$ is multiplicity free for $\lambda \in \mathcal{P}_{+}$. Then $V_{\lambda}$ is strongly multiplicity free if and only if $V_{\lambda}$ is irreducible when restricted to $\mathfrak{a}^{0}$ in $\mathfrak{g}$.
	\end{thm}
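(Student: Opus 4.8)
The plan is to reduce this statement to condition (e) of the main theorem and prove that directly. By the Lemma immediately preceding, the $\mathfrak{a}^{0}$-module $V_{\lambda}$ is irreducible if and only if $d_{\lambda}:=\dim V_{\lambda}=2\mathrm{ht}(\lambda)+1$, so it suffices to show that a multiplicity free $V_{\lambda}$ is strongly multiplicity free if and only if $d_{\lambda}=2\mathrm{ht}(\lambda)+1$. Two preliminary observations will be used repeatedly. First, for any weight $\mu$ of $V_{\lambda}$ the difference $\lambda-\mu$ lies in the root lattice, so $\mathrm{ht}(\mu)$ is defined as in \eqref{height} and $\mu(h^{0})=2\mathrm{ht}(\mu)$. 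Second, exactly as in the proof of the Lemma, the $\mathfrak{a}^{0}$-submodule generated by a highest weight vector $v_{0}$ is irreducible of dimension $\lambda(h^{0})+1=2\mathrm{ht}(\lambda)+1$, whence $d_{\lambda}\geq 2\mathrm{ht}(\lambda)+1$ with no hypothesis on $V_{\lambda}$. I also record that $w_{0}h^{0}=-h^{0}$ for the longest element $w_{0}$ of $W$, so $\mathrm{ht}(w_{0}\lambda)=-\mathrm{ht}(\lambda)$.

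Suppose first that $V_{\lambda}$ is strongly multiplicity free. Then, $V_{\lambda}$ being multiplicity free, its $d_{\lambda}$ distinct weights form a chain $\lambda=\mu_{0}>\mu_{1}>\cdots>\mu_{d_{\lambda}-1}=w_{0}\lambda$ under the partial order. Each $\mu_{j}-\mu_{j+1}$ is a nonzero non-negative integral combination of simple roots, so $\mathrm{ht}(\mu_{j})-\mathrm{ht}(\mu_{j+1})\geq 1$; summing these inequalities gives $d_{\lambda}-1\leq\mathrm{ht}(\mu_{0})-\mathrm{ht}(\mu_{d_{\lambda}-1})=2\mathrm{ht}(\lambda)$. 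Combined with $d_{\lambda}\geq 2\mathrm{ht}(\lambda)+1$ this forces $d_{\lambda}=2\mathrm{ht}(\lambda)+1$, which by the Lemma says exactly that $V_{\lambda}$ is irreducible when restricted to $\mathfrak{a}^{0}$.

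Conversely, assume $d_{\lambda}=2\mathrm{ht}(\lambda)+1$, so $V_{\lambda}$ is already an irreducible $\mathfrak{a}^{0}$-module by the Lemma. Its $h^{0}$-eigenvalues are then $2\mathrm{ht}(\lambda),2\mathrm{ht}(\lambda)-2,\dots,-2\mathrm{ht}(\lambda)$, each of multiplicity one; since $V_{\lambda}$ is multiplicity free, it follows that for every integer $k$ with $0\leq k\leq 2\mathrm{ht}(\lambda)$ there is exactly one weight $\mu_{(k)}\in\Pi(V_{\lambda})$ with $\mathrm{ht}(\lambda)-\mathrm{ht}(\mu_{(k)})=k$. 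I claim that $\mu_{(k+1)}=\mu_{(k)}-\alpha_{i}$ for some simple root $\alpha_{i}$. Indeed, with $v_{0}$ a highest weight vector and $f^{0}=\sum_{i}c_{i}F_{i}$, irreducibility of $V_{\lambda}$ as an $\mathfrak{a}^{0}$-module gives $(f^{0})^{k}v_{0}\neq 0$ for $0\leq k\leq 2\mathrm{ht}(\lambda)$; this vector lies in the $h^{0}$-eigenspace of eigenvalue $2\mathrm{ht}(\lambda)-2k$, which — by uniqueness of the level-$k$ weight together with multiplicity-freeness — is precisely the one-dimensional weight space of $\mu_{(k)}$. Applying one more $f^{0}$, the nonzero vector $(f^{0})^{k+1}v_{0}$ is the sum of the vectors $c_{i}F_{i}(f^{0})^{k}v_{0}$, each of weight $\mu_{(k)}-\alpha_{i}$; some summand is nonzero, so $\mu_{(k)}-\alpha_{i}$ is a weight of $V_{\lambda}$ of level $k+1$, hence equals $\mu_{(k+1)}$. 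Chaining these identities shows that $\Pi(V_{\lambda})=\{\mu_{(0)}>\mu_{(1)}>\cdots>\mu_{(2\mathrm{ht}(\lambda))}\}$ is linearly ordered, i.e. $V_{\lambda}$ is strongly multiplicity free.

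The only delicate point is this last claim: it must use $\mathfrak{a}^{0}$-irreducibility (to know $(f^{0})^{k}v_{0}$ is nonzero and fills out the level-$k$ weight space) and multiplicity-freeness (to identify the nonzero $F_{i}$-image as the level-$(k+1)$ weight) simultaneously. Everything else is elementary bookkeeping with heights together with the $\frsl_{2}$-representation theory already invoked in the proof of the Lemma.
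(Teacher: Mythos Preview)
Your proof is correct and is genuinely different from the paper's. The paper establishes the equivalence by invoking Lemma~4.1 (which you also use) and then checking, for each entry in Howe's list of multiplicity free modules, whether $\dim V_{\lambda}=2\mathrm{ht}(\lambda)+1$ holds; these verifications are Propositions~5.4, 5.6, 5.10, 5.15, 5.17, and 5.19, together with the observation in~\S5.5 for $G_{2}$. Your argument, by contrast, is classification-free: the forward direction bounds $d_{\lambda}-1$ above by $2\mathrm{ht}(\lambda)$ via the total order on weights and below via the $\mathfrak{a}^{0}$-cyclic submodule, while the converse extracts the chain structure of $\Pi(V_{\lambda})$ directly from the action of $f^{0}$ on the one-dimensional $h^{0}$-eigenspaces. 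This buys you a uniform, conceptual proof that would survive unchanged in any setting where a principal $\frsl_{2}$ exists, and it also makes transparent \emph{why} the equivalence holds (each step in the weight chain is forced to be a single simple root). The paper's approach, on the other hand, produces the explicit numerical data $2\mathrm{ht}(\lambda)+1$ versus $\dim V_{\lambda}$ for every multiplicity free module, which feeds into the other equivalences of the main theorem and into the Poincar\'{e} series considerations of Section~4.
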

	
	\begin{proof}
		Through case by case study, we find $\dim V_{\lambda}=2\mbox{ht}(\lambda)+1$ if and only if	
		$V_{\lambda}$ is strongly multiplicity free. We will  leave the detailed calculations in Section 5 (cf. Proposition 5.4, 5.6, 5.10, 5.15, 5.17, 5.19).
	\end{proof}

	Let $\mathscr{P}(\frg)$ be the algebra of all polynomials on $\frg$ and 
	$\frh$ be a Cartan subalgebra of $\frg$. 
	Let $S(\mathfrak{g}^{*})$ be the symmetric algebra over the dual space $\mathfrak{g}^{*}$.
	Then the Killing isomorphism $\mathscr{K}:\frg\rightarrow \frg^{*}$ can be extended to a $\frg$-module isomorphism $$\tilde{\mathscr{K}}:S(\frg)\rightarrow \mathscr{P}(\frg)=S(\frg^{*}).$$
	Due to the ad-invariance of the Killing form, we get $\tilde{\mathscr{K}}(S(\frg)^{\frg})=\mathscr{P}(\frg)^{\frg}$. We  denote these isomorphic algebras $S(\frg)^{\frg}\simeq \mathscr{P}(\frg)^{\frg}\simeq Z(\frg)$ by $J$ in common.

	\begin{defi} For $\nu \in \mathcal{P}_{+}$, 
		the space $$J_{\nu}(\mathfrak{g})=(V_{\nu}\otimes {\mathscr{P}}(\frg))^{\frg}$$
		is called the module of covariants of type $\nu$. 
	\end{defi}

	It was shown  by Kostant [Ko2] that  $J_{\nu}(\mathfrak{g})$ is a free  $J$-module
	with rank $ m_{\nu}^{0}$, where $m_{\nu}^{0}$ is the dimension of zero-weight space of $V_{\nu}$.
	Let $\{f_1 ,\cdots , f_l\}$ be  a set of basic invariants of $\mathscr{P}(\mathfrak{g})^{\frg}$ 
	and let $\{e_1,\cdots,e_l\}$ be their corresponding degrees. Then
	$$J_{\nu}(\mathfrak{g})=(f_1 ,\cdots , f_l)J_{\nu}(\mathfrak{g})\oplus H_{\nu},$$where $H_{\nu}$ is a  graded finite-dimensional vector space over $\mathbb{C}$.
	Any homogeneous $\mathbb{C}$-basis for $H_{\nu}$ is also  a free basis for the free $\mathscr{P}(\mathfrak{g})^{\frg}$-module $J_{\nu}(\mathfrak{g})$.
	Denote $$J_{\nu}(\mathfrak{g})=\oplus_{n\in \mathbb{N}}J_{\nu}(\mathfrak{g})_{n}; \  J_{\nu}(\mathfrak{g})_{n}=(V_{\nu}\otimes {\mathscr{P}}(\frg)_{n})^{\frg}.$$
	Then the Poincar\'{e} series of 
	$J_{\nu}(\mathfrak{g})$ is a rational function of the form:
	$$\mathcal{F}(J_{\nu}(\mathfrak{g}),q)=\sum_{n\in \mathbb{N}}\mbox{dim}J_{\nu}(\mathfrak{g})_{n}q^n=
	\frac{\sum_{j=1}^{m_{\nu}^{0}}q^{e_j(\nu)}}	
	{\prod_{j=1}^{l}(1-q^{e_j})}.$$The numbers $\{e_j(\nu) \ | \ 1 \leq j \leq m_{\nu}^{0}\}$, which are merely the degrees of a set of free homogeneous generators of $J_{\nu}(\mathfrak{g})$, are called the generalized exponents for $V_{\nu}$ [Ko2].

	Assume $\lambda^*$ is the highest weight of the dual module of $V_{\lambda}$.	
	Given any weight $\mu$ of $V_\lambda$, let $V_{\lambda}(\mu)$ be the corresponding weight space.
	We say that $\mu$ is on the $n$-th floor if
	$\mu-(-\lambda^*)=\sum\limits_{i=1}^{l}n_{i}\alpha_i$ with $\sum\limits_{i=1}^{l}n_{i}=n$. 	
	Set $$(V_{\lambda})_{n}=\sum\limits_{\mu: \mbox{floor}(\mu)=n}V_{\lambda}(\mu).$$
	It was pointed by  Panyushev that  the  Dynkin polynomial is: $$D_{\lambda}(q)=\sum\limits_{n}(\mbox{dim}(V_{\lambda})_{n})q^n=\sum_{\mu \in \Pi(V_{\lambda})} m_{\lambda}^{\mu}q^{\mbox{ht}(\lambda+\mu)}$$
	and its degree is $2\mbox{ht}(\lambda)$ [P].	
	Due to Panyushev (cf. Proposition 5.2 in [P]), the Poincar\'{e} series for $(\End V_{\lambda}\otimes {\mathscr{P}}(\frg))^{\frg}$ is equal to $D_{\lambda}(q)\prod\limits_{i=1}^{r}\frac{1}{1-q^{e_i}}$
	if and only if $\frg$-module $V_\lambda$ is multiplicity free.
	Since the  $J$-modules $(\End V_{\lambda}\otimes {\mathscr{P}}(\frg))^{\frg}$
	and $R_\lambda(\frg)$ are isomorphic, the  Poincar\'{e} series for $R_\lambda(\frg)$ and  $(\mbox{End}V_{\lambda}\otimes {\mathscr{P}}(\frg))^{\frg}$ are equal.

	The following  symbol method related to the filtered module introduced by 
	Bernstein and Lunts (cf. Lemma 4.2 in [BL]) is important for dealing with
	the filtered $Z(\frg)$-module $R_\lambda(\frg)$.
	
	\begin{lemma} 
		Let $M=\oplus_{i}M_i$ be a filtered module filtered by $F^{i}M=\oplus_{j\leq i} M_j$ over a filtered algebra $A$.
		Denote $\mbox{gr}M=\oplus_{i}\mbox{gr}_{i}M$, where $\mbox{gr}_{i}M=F^{i}M/
		F^{i-1}M$. For every $m \in M$, we denote $\sigma(m)$ its symbol in  $\mbox{gr}M$. Let $\{m_{\alpha}\}$ be family of elements of $M$. Suppose that their symbols $\{\sigma(m_{\alpha})\}$ form a free basis for the $\mbox{gr}A$-module $\mbox{gr}M$. Then  $\{m_{\alpha}\}$ is a free basis of the $A$-module $M$.
	\end{lemma}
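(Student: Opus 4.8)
The plan is to check the two defining properties of a free basis in turn: first that $\{m_\alpha\}$ generates $M$ over $A$, and then that it is $A$-linearly independent. Both will be obtained by induction on a suitable degree, using that the filtration $F^\bullet M$ is exhaustive and bounded below; the latter is automatic in our setting, where $M=\oplus_{i\geq 0}M_i$ is genuinely $\mathbb{N}$-graded, so that $F^{-1}M=0$. Throughout I write, for a nonzero $m\in M$, $\deg(m)$ for the unique $n$ with $m\in F^nM\setminus F^{n-1}M$, so that $\sigma(m)$ is a nonzero homogeneous element of $\mbox{gr}_nM$; likewise $\deg(a)$ for nonzero $a\in A$. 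I use the compatibility $F^iA\cdot F^jM\subseteq F^{i+j}M$ that makes $\mbox{gr}M$ a graded $\mbox{gr}A$-module, and I put $d_\alpha=\deg(m_\alpha)$, so $\sigma(m_\alpha)\in\mbox{gr}_{d_\alpha}M$.

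For generation I will argue by induction on $\deg(m)$ that $m\in\sum_\alpha Am_\alpha$, the case $m=0$ being vacuous. Given $m$ with $n=\deg(m)$, the symbol $\sigma(m)$ lies in the single graded piece $\mbox{gr}_nM$, so since $\{\sigma(m_\alpha)\}$ is a $\mbox{gr}A$-basis of $\mbox{gr}M$ consisting of homogeneous elements, I may write $\sigma(m)=\sum_\alpha\bar a_\alpha\sigma(m_\alpha)$ with $\bar a_\alpha\in\mbox{gr}_{n-d_\alpha}A$ and only finitely many $\bar a_\alpha\neq 0$. Lifting each $\bar a_\alpha$ to $a_\alpha\in F^{n-d_\alpha}A$, the element $\sum_\alpha a_\alpha m_\alpha$ lies in $F^nM$ and has image $\sum_\alpha\bar a_\alpha\sigma(m_\alpha)=\sigma(m)$ in $\mbox{gr}_nM$; hence $m-\sum_\alpha a_\alpha m_\alpha\in F^{n-1}M$, which by the inductive hypothesis lies in $\sum_\alpha Am_\alpha$ (the induction terminating because $F^{-1}M=0$). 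Thus $m\in\sum_\alpha Am_\alpha$ as well.

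For independence, suppose toward a contradiction that $\sum_{\alpha\in S}a_\alpha m_\alpha=0$ with $S$ finite and every $a_\alpha\neq 0$, and set $n=\max_{\alpha\in S}\bigl(\deg(a_\alpha)+d_\alpha\bigr)$, achieved on a nonempty subset $T\subseteq S$. Reducing this relation modulo $F^{n-1}M$ annihilates every term with index outside $T$ and sends $a_\alpha m_\alpha$ with $\alpha\in T$ to $\sigma(a_\alpha)\sigma(m_\alpha)\in\mbox{gr}_nM$, so that $\sum_{\alpha\in T}\sigma(a_\alpha)\sigma(m_\alpha)=0$ in $\mbox{gr}M$. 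Freeness of the basis $\{\sigma(m_\alpha)\}$ over $\mbox{gr}A$ forces $\sigma(a_\alpha)=0$ for every $\alpha\in T$, contradicting $a_\alpha\neq 0$ (which gives $\sigma(a_\alpha)\neq 0$). Hence no nontrivial relation exists, and together with the preceding paragraph $M=\oplus_\alpha Am_\alpha$, which is the assertion.

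The proof is purely formal, so there is no deep obstacle; the only real work is the bookkeeping of grading shifts — ensuring in the generation step that the correction term $\sum_\alpha a_\alpha m_\alpha$ matches $m$ exactly in top degree (so the induction genuinely descends), and in the independence step that only the maximal-degree terms survive in $\mbox{gr}_nM$ and produce an honest homogeneous relation among the $\sigma(m_\alpha)$. The one hypothesis used beyond those spelled out is that the filtration is bounded below, so that the downward induction in the generation step terminates; this holds in our situation because $R_\lambda(\frg)$ carries the grading by polynomial degree.
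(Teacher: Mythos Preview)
Your argument is correct and is the standard proof of this filtered--graded lifting lemma. Note, however, that the paper does not supply its own proof of this statement at all: it simply quotes the result as Lemma~4.2 of Bernstein--Lunts [BL] and uses it as a black box, so there is nothing to compare against beyond observing that your proof is the expected one.
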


	\begin{thm}
		The matrices 	
		$1,M_{\lambda}(C), \ldots, M_\lambda(C)^{d_\lambda-1} $ form a free basis for $Z(\frg)$-module $R_\lambda(\frg)$ if and only if $\mathfrak{g}$-module  $V_\lambda$ is strongly multiplicity free.
	\end{thm}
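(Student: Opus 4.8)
The plan is to transport the statement to the associated graded algebra $\gr R_\lambda(\frg)\cong(\End V_\lambda\otimes\mathscr P(\frg))^\frg$, where the Poincar\'e series of multiplicity free modules recorded above is available, and then to descend back to $R_\lambda(\frg)$ by the Bernstein--Lunts symbol lemma. Concretely I will prove a) $\Rightarrow$ d) $\Rightarrow$ e) and then invoke the already-established equivalence e) $\Leftrightarrow$ a) (through f), proved with the principal $\frsl_2$ and the case-by-case analysis of Section~5). Write $m$ for the principal symbol of $M_\lambda(C)$, so $m\in\gr_1 R_\lambda(\frg)$, and set $J=\mathscr P(\frg)^\frg$, which is $\gr Z(\frg)$; note the symbols of the powers $M_\lambda(C)^j$ are the $m^j$ as soon as the latter are nonzero.

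The computational core is this: under the Killing identification of invariant polynomial maps $\frg\to\End V_\lambda$, the symbol $m$ is simply $\xi\mapsto\pi_\lambda(\xi)$, so $m^j$ is $\xi\mapsto\pi_\lambda(\xi)^j$. Choosing $\xi\in\frh$ regular and off the finitely many hyperplanes $\mu(\xi)=\mu'(\xi)$ indexed by pairs $\mu\neq\mu'$ in $\Pi(V_\lambda)$, the operator $\pi_\lambda(\xi)$ is diagonal with pairwise distinct eigenvalues, which number exactly $d_\lambda$ when $V_\lambda$ is multiplicity free. A relation $\sum_{j=0}^{d_\lambda-1}p_j m^j=0$ with $p_j\in J$ then evaluates at such $\xi$ to a Vandermonde system forcing $p_j(\xi)=0$ for all $j$, hence $p_j=0$ by density. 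So for every multiplicity free $V_\lambda$ the list $1,m,\dots,m^{d_\lambda-1}$ is $J$-linearly independent in $\gr R_\lambda(\frg)$, and in particular each $m^j\neq0$.

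For a) $\Rightarrow$ d): strong multiplicity freeness implies multiplicity freeness, so $R_\lambda(\frg)$ is a free $Z(\frg)$-module with $\mathcal F(\gr R_\lambda(\frg),q)=D_\lambda(q)\prod_i(1-q^{e_i})^{-1}$. Since the weights of a strongly multiplicity free module are linearly ordered, the heights $\mathrm{ht}(\lambda+\mu)$, $\mu\in\Pi(V_\lambda)$, are pairwise distinct; together with e) (i.e. $2\,\mathrm{ht}(\lambda)=d_\lambda-1$, which holds since a) $\Rightarrow$ e)) this forces $D_\lambda(q)=1+q+\cdots+q^{d_\lambda-1}$. The free $J$-submodule $\bigoplus_{j=0}^{d_\lambda-1}Jm^j$ of $\gr R_\lambda(\frg)$ then has precisely this Poincar\'e series, hence is all of $\gr R_\lambda(\frg)$, so $1,m,\dots,m^{d_\lambda-1}$ is a free $J$-basis; by the symbol lemma, $1,M_\lambda(C),\dots,M_\lambda(C)^{d_\lambda-1}$ is a free $Z(\frg)$-basis of $R_\lambda(\frg)$. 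Conversely, if the latter is a free $Z(\frg)$-basis then its members are $Z(\frg)$-independent, so $P_\lambda(X)$ cannot have degree $<d_\lambda$; since $\deg P_\lambda$ is the number of distinct weights of $V_\lambda$, the module $V_\lambda$ is multiplicity free. Then the $m^j$ are $J$-independent, $M_\lambda(C)^j$ has symbol degree exactly $j$, and a short symbol computation (using the $J$-independence of the $m^j$) shows $F^{n}R_\lambda(\frg)=\bigoplus_j F^{n-j}Z(\frg)M_\lambda(C)^j$ for all $n$; comparing the resulting series with $D_\lambda(q)\prod_i(1-q^{e_i})^{-1}$ gives $D_\lambda(q)=1+q+\cdots+q^{d_\lambda-1}$, hence $2\,\mathrm{ht}(\lambda)=\deg D_\lambda=d_\lambda-1$, which is e). The cycle then closes via e) $\Leftrightarrow$ a).

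The part I expect to be the genuine obstacle is not the homological bookkeeping but the identity $D_\lambda(q)=1+q+\cdots+q^{d_\lambda-1}$ in the forward direction: it says that a strongly multiplicity free $V_\lambda$ has its $d_\lambda$ weights filling the floors $0,1,\dots,d_\lambda-1$ without gaps or repetitions, equivalently that $V_\lambda$ remains irreducible under the principal $\frsl_2$ --- precisely the content supplied by the case-by-case verification of Section~5 (equivalently, by a) $\Leftrightarrow$ f)), and where the classification of (strongly) multiplicity free modules is genuinely used. By contrast the graded-to-filtered passage --- that $M_\lambda(C)^j$ really has symbol degree $j$, and that the symbol lemma applies to the free $Z(\frg)$-module $R_\lambda(\frg)$ --- is routine once the $J$-linear independence of $1,m,\dots,m^{d_\lambda-1}$ is in hand.
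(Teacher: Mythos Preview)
Your proof is correct and follows the same architecture as the paper's: pass to $\gr R_\lambda(\frg)\cong(\End V_\lambda\otimes\mathscr P(\frg))^\frg$, use Panyushev's Poincar\'e series identity in the multiplicity free case, and descend via the Bernstein--Lunts symbol lemma; for the converse, read off the generalized exponents and conclude $2\,\mathrm{ht}(\lambda)=d_\lambda-1$, closing the loop through the equivalence a) $\Leftrightarrow$ e) established independently.

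The substantive difference is in the forward direction. The paper simply quotes Kirillov's theorems (Theorems~A, B, C, G, $\mathcal Q$ in [Ki1]) to assert that $1,m,\dots,m^{d_\lambda-1}$ is a free $J$-basis of $(\End V_\lambda\otimes S(\frg))^\frg$ when $V_\lambda$ is strongly multiplicity free, and then applies the symbol lemma. You instead give a self-contained argument: identify the symbol $m$ with $\xi\mapsto\pi_\lambda(\xi)$, use a Vandermonde evaluation at generic regular $\xi\in\frh$ to prove $J$-linear independence of $1,m,\dots,m^{d_\lambda-1}$ for any multiplicity free $V_\lambda$, and then match Poincar\'e series (using that strong multiplicity freeness plus a)$\Rightarrow$e) forces $D_\lambda(q)=1+q+\cdots+q^{d_\lambda-1}$) to show the submodule they span is everything. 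Your approach avoids the external dependence on [Ki1] and makes transparent exactly where the classification input a)$\Leftrightarrow$e)$\Leftrightarrow$f) is consumed; the paper's route is shorter on the page but treats Kirillov's case-by-case computations as a black box. In the backward direction your argument is also a bit more explicit than the paper's---you derive multiplicity freeness from $\deg P_\lambda\geq d_\lambda$ and carry out the filtration comparison $F^nR_\lambda(\frg)=\bigoplus_j F^{n-j}Z(\frg)M_\lambda(C)^j$ in detail, whereas the paper asserts directly that the generalized exponents are $0,1,\dots,d_\lambda-1$---but the underlying idea (largest exponent is $2\,\mathrm{ht}(\lambda)$) is the same.
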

	
	\begin{proof}  Assume $\mathfrak{g}$-module  $V_\lambda$ is strongly multiplicity free. By Kirillov's theorems (cf. Theorems A, B, C, G, $\mathcal{Q}$ in [Ki1]), we know that $1,M_{\lambda}(C), \ldots, $ $M_\lambda(C)^{d_\lambda-1} $ form a basis for  $J$-module $(\End V_{\lambda}\otimes S(\frg))^{\frg}$. Since the associated graded algebra of the filtered algebra $U(\frg)$ is isomorphic to $S(\frg)$ (cf. Proposition 11.1 in [C]), we obtain that  the associated graded module of the filtered $Z(\frg)$-module $R_\lambda(\frg)$ is isomorphic to  $J$-module $(\End V_{\lambda}\otimes S(\frg))^{\frg}$. By the symbol method related to the filtered module introduced by Bernstein and Lunts (cf. Lemma 4.2 in [BL]),
		we obtain that  $1,M_{\lambda}(C), \ldots, M_\lambda(C)^{d_\lambda-1} $ form a free basis for $Z(\frg)$-module $R_\lambda(\frg)$ .

		On the other hand, if $1,M_{\lambda}(C), \ldots, M_\lambda(C)^{d_\lambda-1}$ form a free  basis for $Z(\frg)$-module $R_\lambda(\frg)$, then the generalized exponents of $\End V_\lambda$
		is $0,1,\ldots,d_\lambda-1$.
		It follows from the above results about  the Poincar\'{e} series for
		$R_\lambda(\frg)$ that the largest possible exponent of $\End V_\lambda$ is $2\text{ht}(\lambda)$. Therefore, $2\text{ht}(\lambda)=d_\lambda-1$, which is equivalent to
		that $V_\lambda$ is an irreducible module for the principal TDS  $\mathfrak{a}^{0}$ by Lemma 4.2. Hence, Theorem 4.3 implies that 
		$\mathfrak{g}$-module  $V_\lambda$ is strongly multiplicity free.
	\end{proof}

	\section{$M_{\lambda,\nu}(C)$-eigenvalues and heights for multiplicity free representations}

	In this section, we will give  detailed  calculations to show that  the eigenvalues of $M_{\lambda,\nu}(C)$ are different on each summand $ V_{\nu+\lambda_{i}}$ if and only if  $\lambda$ is  strongly multiplicity free.
	We will also prove that the equation $2 \text{ht}(\lambda) + 1 = \dim V_{\lambda}$ holds if and only if 
	$V_{\lambda}$ is  strongly multiplicity free through case by case study.
	 
	We have  known from Proposition \ref{smf_has_distinct_eigs} that  the eigenvalues of $M_{\lambda,\nu}(C)$ on each irreducible component $V_{\nu + \lambda_{i}} $ are distinct from each other if $\mathfrak{g}$-module $V_{\lambda}$ is strongly multiplicity free.
	For multiplicity free but not strongly multiplicity free $\mathfrak{g}$-module $V_{\lambda}$, we can construct certain distinct irreducible components occurring in some tensor product decomposition $V_{\lambda} \otimes V_{\nu}$ so that they have the same eigenvalues.

	\subsection{$A_{l} (l \geq 1)$}

	The fundamental modules $V_{\omega_{k}}$ ($k=1,\cdots, l$) are irreducible minuscule $A_{l}$-modules.
	Denote the weights of $V_{\omega_{k}}$ by $\lambda_{i_{1} \cdots i_{k}}$ with $1 \leq i_{1} < \dots < i_{k} \leq l+1 $.
	For $\nu =\sum\limits_{i=1}^{l}a_{i}\omega_{i} \in \mathcal{P}_{+}$,
	let the Young tableau of $\nu$ be of the form $[f(\nu)] = [f_{1}(\nu), \dots, f_{l}(\nu), $ $f_{l+1}(\nu)]$, where 
	$$f_{i}(\nu) = a_{i} + a_{i+1} + \dots + a_{l}$$ 
	for $i = 1, \dots, l$ and $ f_{1}(\nu) \geq \dots \geq f_{l}(\nu) \geq f_{l+1}(\nu) =0$. And we also have
	$$V_{\omega_{k}} \otimes V_{\nu} = \sum_{\substack{1 \leq i_{1} < \dots < i_{k} \leq l+1  \\ \nu + \lambda_{i_{1} \cdots i_{k}} \in \mathcal{P}_{+}}} V_{\nu +  \lambda_{i_{1} \cdots i_{k}}}. $$

	\begin{prop}
		Assume $V_{\nu +  \lambda_{i_{1} \dots i_{k}}}$ and $V_{\nu +  \lambda_{j_{1} \dots j_{k}}}$ are two different irreducible components occurring in the decomposition of  $V_{\omega_{k}} \otimes V_{\nu}$ with $i_{1} \leq j_{1}$. Then $\chi_{\nu + \lambda_{i_{1} \cdots i_{k}}}(C) = \chi_{\nu + \lambda_{j_{1} \cdots j_{k}}}(C)$ if and only if 
		$$
		\sum_{t = 1}^{k} (f_{i_{t}}(\nu) - f_{j_{t}}(\nu)) =  \sum_{t = 1}^{k}( i_{t} - j_{t} ).
		$$
		
	\end{prop}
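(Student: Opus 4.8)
The plan is to turn this into the standard $\epsilon$-coordinate computation of Casimir eigenvalues. Recall from Section~2 that $\chi_\mu(C)=(\mu,\mu+2\delta)$, where $(\cdot,\cdot)$ is the form on $\mathfrak h^*$ induced by the Killing form. Setting $\varLambda_i=\nu+\lambda_{i_1\cdots i_k}$ and $\varLambda_j=\nu+\lambda_{j_1\cdots j_k}$, the same manipulation as in the proof of Proposition~\ref{smf_has_distinct_eigs} gives
$$\chi_{\varLambda_i}(C)-\chi_{\varLambda_j}(C)=(\varLambda_i+\varLambda_j+2\delta,\ \varLambda_i-\varLambda_j)=(2\nu+\lambda_{i_1\cdots i_k}+\lambda_{j_1\cdots j_k}+2\delta,\ \lambda_{i_1\cdots i_k}-\lambda_{j_1\cdots j_k}),$$
since $\varLambda_i-\varLambda_j=\lambda_{i_1\cdots i_k}-\lambda_{j_1\cdots j_k}$. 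So the proposition is equivalent to the assertion that this inner product vanishes exactly when $\sum_{t=1}^{k}(f_{i_t}(\nu)-f_{j_t}(\nu))=\sum_{t=1}^{k}(i_t-j_t)$.

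Next I would realize $\mathfrak h^*$ for $A_l$ as $\mathbb R^{l+1}/\mathbb R(\epsilon_1+\cdots+\epsilon_{l+1})$, with the bilinear form induced by the standard one $(\epsilon_m,\epsilon_n)=\delta_{mn}$; the Killing form is a positive scalar multiple of this, which does not affect whether the above expression is zero. In these coordinates the weights of the minuscule module $V_{\omega_k}$ are $\lambda_{i_1\cdots i_k}=\epsilon_{i_1}+\cdots+\epsilon_{i_k}$; the dominant weight $\nu$ with Young tableau $[f_1(\nu),\ldots,f_{l+1}(\nu)]$ equals $\sum_{m=1}^{l+1}f_m(\nu)\epsilon_m$; and $2\delta=\sum_{m=1}^{l+1}(l+2-2m)\epsilon_m$. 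Since $\lambda_{i_1\cdots i_k}-\lambda_{j_1\cdots j_k}$ has coordinate sum zero it lies in the root lattice, so pairing it against the classes above is well defined despite the ambiguity modulo $\epsilon_1+\cdots+\epsilon_{l+1}$.

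Then it only remains to expand term by term. One gets $(\lambda_{i_1\cdots i_k}+\lambda_{j_1\cdots j_k},\ \lambda_{i_1\cdots i_k}-\lambda_{j_1\cdots j_k})=|\lambda_{i_1\cdots i_k}|^2-|\lambda_{j_1\cdots j_k}|^2=k-k=0$, because each weight is a sum of $k$ distinct basis vectors; $(2\nu,\ \lambda_{i_1\cdots i_k}-\lambda_{j_1\cdots j_k})=2\sum_{t=1}^{k}(f_{i_t}(\nu)-f_{j_t}(\nu))$; and $(2\delta,\ \lambda_{i_1\cdots i_k}-\lambda_{j_1\cdots j_k})=2\sum_{t=1}^{k}(j_t-i_t)$. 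Adding these,
$$\chi_{\varLambda_i}(C)-\chi_{\varLambda_j}(C)=2\Big(\sum_{t=1}^{k}(f_{i_t}(\nu)-f_{j_t}(\nu))-\sum_{t=1}^{k}(i_t-j_t)\Big),$$
which vanishes precisely under the stated condition, proving the equivalence. There is no real obstacle here: the whole argument is bookkeeping with the $\epsilon$-coordinates, the only points needing care being that one works modulo the diagonal direction and that the boundary index value $i_t=l+1$ is consistent (there $f_{l+1}(\nu)=0$ and $l+1-i_t=0$). The hypothesis $i_1\le j_1$ is not used for this equivalence; presumably it serves only to fix a normalization in what follows.
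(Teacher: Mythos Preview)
Your argument is correct. Both your proof and the paper's compute the difference $\chi_{\varLambda_i}(C)-\chi_{\varLambda_j}(C)$ explicitly and arrive at the same expression $2\sum_t(f_{i_t}(\nu)-f_{j_t}(\nu))-2\sum_t(i_t-j_t)$, so the underlying content is identical. The paper, however, does not start from the bilinear identity $(\varLambda_i+\varLambda_j+2\delta,\varLambda_i-\varLambda_j)$; instead it invokes Popov's power-sum formula \eqref{C_{2} of A_{l}} from the appendix, writes out the Young tableau entries $f_t(\nu+\lambda_{i_1\cdots i_k})$, and expands the resulting sums of squares. Your route is shorter and more conceptual: by splitting the pairing into the three pieces $|\lambda_{i_1\cdots i_k}|^2-|\lambda_{j_1\cdots j_k}|^2=0$, $(2\nu,\cdot)$, and $(2\delta,\cdot)$, you avoid the intermediate square expansions entirely and make transparent why the index $l{+}1$ causes no trouble. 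Your remark that the scalar relating the Killing form to the standard $\epsilon$-form is irrelevant for detecting vanishing is the only point one should state explicitly, and you do. You are also right that the hypothesis $i_1\le j_1$ plays no role in the equivalence itself.
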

	\begin{proof}
		Let $[f(\nu + \lambda_{i_{1} \cdots i_{k}})] = [f_{1}(\nu + \lambda_{i_{1} \cdots i_{k}}), \cdots, f_{l}(\nu + \lambda_{i_{1} \cdots i_{k}}), f_{l+1}(\nu + \lambda_{i_{1} \cdots i_{k}})] $ be the Young tableau of dominant weight $\nu + \lambda_{i_{1} \cdots i_{k}}$. 
		Then 
		\begin{equation*} 
			f_{t}(\nu + \lambda_{i_{1} \cdots i_{k}}) = 
			\begin{cases}
				0 & \textrm{ if } t = l + 1, \\
				f_{t}(\nu) + 1 & \textrm{ if } t \in \{ i_{1}, \dots, i_{k}\}, \\
				f_{t}(\nu) & \textrm{ otherwise}.
			\end{cases}
		\end{equation*}
			Since
		$$ \dfrac{\sum_{t = 1}^{l+1}f_{t}(\nu + \lambda_{i_{1} \cdots i_{k}})}{l+1} 
		= \dfrac{\sum_{t = 1}^{l+1}f_{t}(\nu + \lambda_{j_{1} \cdots j_{k}})}{l+1},$$
		we denote the above constant by $a_{0}$ and set
		$$c_{s} = a_{0} - (l+1) +s, s = 1, 2, \dots, l+1.$$ 
		By \eqref{C_{2} of A_{l}} we have
		$$\chi_{\nu + \lambda_{i_{1} \cdots i_{k}}}(C) - \chi_{\nu + \lambda_{j_{1} \cdots j_{k}}}(C)$$
		$$= \sum_{s = 1}^{l+1} (f_{s}(\nu + \lambda_{i_{1} \cdots i_{k}}) - c_{s})^{2} - \sum_{s = 1}^{l+1} (f_{s}(\nu + \lambda_{j_{1} \cdots j_{k}}) - c_{s})^{2}$$
		$$= (\sum_{t = 1}^{k}(f_{i_{t}}(\nu) + 1 - c_{i_{t}})^2 + \sum_{t = 1}^{k}(f_{j_{t}}(\nu) - c_{j_{t}})^2 - \sum_{t \in \{i_{1}, \dots, i_{k}\} \cap \{j_{1}, \dots, j_{k}\}}(f_{t}(\nu) - c_{t})^2) $$
		$$ \quad - (\sum_{t = 1}^{k} (f_{j_{t}}(\nu) + 1 - c_{j_{t}})^2 + \sum_{t = 1}^{k}(f_{i_{t}}(\nu) - c_{i_{t}})^2 - \sum_{t \in \{i_{1}, \dots, i_{k}\} \cap \{j_{1}, \dots, j_{k}\}}(f_{t}(\nu) - c_{t})^2)$$
		$$= [\sum_{t = 1}^{k} (f_{i_{t}}(\nu) + 1 - c_{i_{t}})^2  - \sum_{t = 1}^{k} (f_{i_{t}}(\nu) - c_{i_{t}})^2] - [\sum_{t = 1}^{k} (f_{j_{t}}(\nu) + 1 - c_{j_{t}})^2  - \sum_{t = 1}^{k} (f_{j_{t}}(\nu) - c_{j_{t}})^2]$$
		$$= 2\sum_{t = 1}^{k} (f_{i_{t}}(\nu) - f_{j_{t}}(\nu)) - 2\sum_{t = 1}^{k}(i_{t} - j_{t}).
		$$
These equations imply  the statement of the proposition.
	\end{proof}

	\begin{lemma}
		For $\nu =\sum\limits_{i=1}^{l} a_{i}\omega_{i} \in \mathcal{P}_{+}$, we have: 
		$$
		V_{k\omega_{1}} \otimes V_{\nu} = \bigoplus_{\substack{c_{1}, \dots, c_{l+1} \in \mathbb{N} \\ \sum_{i = 1}^{l+1} c_{i} = k \\ c_{i+1} \leq a_{i}, i = 1, \dots, l}} V_{b_{1}\omega_{1} + \dots + b_{l}\omega_{l}},
		$$
		where $b_{i} = a_{i} + c_{i} - c_{i+1}$ for $i = 1, 2, \dots, l$ (cf.  [FH]). 
		
	\end{lemma}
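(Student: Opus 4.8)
The plan is to realize $V_{k\omega_1}$ as a symmetric power of the standard module and then appeal to the classical Pieri rule; the only real work is a translation between partitions and the fundamental-weight coordinates used in the statement.

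First, fix the standard $(l+1)$-dimensional module $W$ of $A_l=\mathfrak{sl}_{l+1}$, with weights $\varepsilon_1,\dots,\varepsilon_{l+1}$, so that $V_{\omega_1}\cong W$ and $V_{k\omega_1}\cong\operatorname{Sym}^k W$. I would use the dictionary between dominant weights and partitions already introduced in this subsection: $\nu=\sum_{i=1}^l a_i\omega_i$ corresponds to the partition with rows $f_i(\nu)=a_i+a_{i+1}+\cdots+a_l$ for $i=1,\dots,l$ and $f_{l+1}(\nu)=0$, so $a_i=f_i(\nu)-f_{i+1}(\nu)$. In the variables $x_j=e^{\varepsilon_j}$ one has $\operatorname{ch}V_\nu=s_{f(\nu)}(x_1,\dots,x_{l+1})$, the Schur polynomial, and $\operatorname{ch}V_{k\omega_1}=h_k(x_1,\dots,x_{l+1})$, the complete homogeneous symmetric polynomial. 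Since for $GL_{l+1}$, hence after restriction for $\mathfrak{sl}_{l+1}$, the characters of the irreducibles are the linearly independent Schur polynomials, decomposing $V_{k\omega_1}\otimes V_\nu$ amounts to expanding the product $s_{f(\nu)}\cdot h_k$ into Schur polynomials.

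Next I would invoke Pieri's rule (cf. [FH]): $s_{f(\nu)}\cdot h_k=\sum_{\mu}s_\mu$, the sum over partitions $\mu$ with $|\mu|=|f(\nu)|+k$ such that $\mu/f(\nu)$ is a horizontal strip, i.e.\ $f_i(\nu)\le\mu_i$ and $\mu_{i+1}\le f_i(\nu)$ for every $i\ge 1$; in particular every $s_\mu$ occurs with multiplicity at most one, in agreement with the fact that a tensor product with a multiplicity free module is multiplicity free. It remains to re-encode this index set. Put $c_i:=\mu_i-f_i(\nu)\ge 0$ for $i=1,\dots,l+1$, the number of boxes added to row $i$; note $\mu_{l+2}\le f_{l+1}(\nu)=0$ forces $\mu$ to have at most $l+1$ rows. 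Then $|\mu|=|f(\nu)|+k$ becomes $\sum_{i=1}^{l+1}c_i=k$; the remaining inequalities $f_i(\nu)\le\mu_i$ are just $c_i\ge 0$; and the strip condition $\mu_{i+1}\le f_i(\nu)$ becomes $f_{i+1}(\nu)+c_{i+1}\le f_i(\nu)$, i.e.\ $c_{i+1}\le f_i(\nu)-f_{i+1}(\nu)=a_i$ for $i=1,\dots,l$ (the instance $i=l$ reading $c_{l+1}\le a_l$, and larger $i$ being vacuous). Finally the $\mathfrak{sl}_{l+1}$-highest weight attached to $\mu$ is $\sum_{i=1}^l b_i\omega_i$ with $b_i=\mu_i-\mu_{i+1}=(f_i(\nu)-f_{i+1}(\nu))+(c_i-c_{i+1})=a_i+c_i-c_{i+1}$, and $b_i\ge c_i\ge 0$ by the constraint, so this is a genuine dominant weight. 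The map $(c_1,\dots,c_{l+1})\mapsto(b_1,\dots,b_l)$ is injective: if two admissible tuples give the same $b_i$'s, then $c_i-c_{i+1}=c_i'-c_{i+1}'$ for $i=1,\dots,l$, so $c_i-c_i'$ is independent of $i$, and since $\sum c_i=\sum c_i'=k$ this common value is $0$. Hence the summands are parametrized exactly by the tuples in the statement, and the formula follows by comparing characters.

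There is no analytic difficulty here; the only thing requiring care is the bookkeeping of conventions --- the direction of the horizontal-strip inequality, the partition/weight dictionary, and the passage from $GL_{l+1}$ to $\mathfrak{sl}_{l+1}$ (partitions differing by a full length-$(l+1)$ column give the same $\mathfrak{sl}_{l+1}$-module, which is exactly why no spurious $b_{l+1}$ appears and why $c_{l+1}$ is a free parameter). If a self-contained argument were preferred, Pieri's rule can be derived from the Weyl character formula together with the Jacobi--Trudi identity, or obtained as the special case $\mu^{(2)}=(k)$ of the Littlewood--Richardson rule; but since the statement already cites [FH], it is cleanest simply to quote it.
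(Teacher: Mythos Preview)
Your proof is correct and is precisely the argument the paper has in mind: the lemma in the paper carries no proof at all, only the reference ``(cf.\ [FH])'', and what you have written is exactly the Pieri-rule computation from [FH], carefully translated into the fundamental-weight coordinates $a_i,b_i,c_i$ used here. Your bookkeeping (horizontal-strip condition $\Leftrightarrow c_{i+1}\le a_i$, dominance $b_i\ge 0$, injectivity of $(c_i)\mapsto(b_i)$, and the $GL_{l+1}$ versus $\mathfrak{sl}_{l+1}$ passage) is all in order.
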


	\begin{prop}
		There exists some highest weight $\nu=\sum\limits_{i=1}^{l} a_{i}\omega_{i} \in \mathcal{P}_{+}$ such that the tensor product $V_{k\omega_{1}} \otimes V_{\nu}$ has two different irreducible $A_{l}$-components whose eigenvalues of $M_{k\omega_{1},\nu}(C)$ are the same if and only if $l \geq 2$.
	\end{prop}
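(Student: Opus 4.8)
The plan is to prove the two implications separately; throughout I take $k\geq 2$, since for $k=1$ the module is $V_{\omega_{1}}$, which is strongly multiplicity free whenever $l\geq 2$ and is therefore covered by Proposition \ref{smf_has_distinct_eigs}. I would first dispose of the implication ``the configuration exists $\Rightarrow l\geq 2$'' by contraposition. If $l=1$, then $\mathfrak{g}=\mathfrak{sl}_{2}$ and $V_{k\omega_{1}}$ is strongly multiplicity free (item (5) of the list of strongly multiplicity free modules). Since (as recalled before Theorem \ref{a = b}) the eigenvalue of $M_{k\omega_{1},\nu}(C)$ on a summand $V_{\nu+\lambda_{i}}$ of $V_{k\omega_{1}}\otimes V_{\nu}$ is $f_{C,i}(\nu)=\tfrac12\bigl(\chi_{\nu+\lambda_{i}}(C)-\chi_{k\omega_{1}}(C)-\chi_{\nu}(C)\bigr)$, Proposition \ref{smf_has_distinct_eigs} shows that two summands share an eigenvalue iff $\chi_{\nu+\lambda_{i}}(C)=\chi_{\nu+\lambda_{j}}(C)$ iff $\lambda_{i}=\lambda_{j}$; hence distinct summands always have distinct eigenvalues, and no $\nu$ as in the statement exists.

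For the converse I would, given $l\geq 2$, exhibit an explicit witness. First, identify the weights of $V_{k\omega_{1}}$ with the compositions $m=(m_{1},\dots,m_{l+1})$ of $k$ into $l+1$ nonnegative parts, write $\lambda_{m}$ for the corresponding weight, and note that if $\nu$ corresponds to the sequence $(f_{1}(\nu),\dots,f_{l+1}(\nu))$ then $\nu+\lambda_{m}$ corresponds to $(f_{1}(\nu)+m_{1},\dots,f_{l+1}(\nu)+m_{l+1})$. Rewriting the branching rule stated just above (whose parameters $(c_{1},\dots,c_{l+1})$ are precisely the composition $m$), one gets $V_{k\omega_{1}}\otimes V_{\nu}=\bigoplus_{m}V_{\nu+\lambda_{m}}$, summed over the compositions $m$ of $k$ with $m_{i+1}\leq a_{i}$ for $i=1,\dots,l$. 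I propose the choice
$$\nu=2\omega_{1}+(k-1)\omega_{2},\qquad m=(0,2,k-2,0,\dots,0),\qquad m'=(1,0,k-1,0,\dots,0),$$
which uses $l\geq 2$ (so that $\omega_{2}$ exists) and $k\geq 2$ (so that all entries are nonnegative). Then I would check: (i) $m_{i+1}\leq a_{i}$ and $m'_{i+1}\leq a_{i}$ for all $i$ (here $a_{1}=2$, $a_{2}=k-1$ and $a_{i}=0$ for $i>2$), so that both $V_{\nu+\lambda_{m}}$ and $V_{\nu+\lambda_{m'}}$ occur in $V_{k\omega_{1}}\otimes V_{\nu}$; (ii) $\nu+\lambda_{m}$ corresponds to $(k+1,k+1,k-2,0,\dots)$ and $\nu+\lambda_{m'}$ to $(k+2,k-1,k-1,0,\dots)$, which are two \emph{distinct} dominant weights; and (iii) by the $A_{l}$-Casimir formula \eqref{C_{2} of A_{l}}, used exactly as in the proof of the first Proposition of this subsection (the eigenvalue comparison for $V_{\omega_{k}}\otimes V_{\nu}$), $\chi_{\nu+\lambda_{m}}(C)-\chi_{\nu+\lambda_{m'}}(C)=\sum_{s}\bigl[(f_{s}(\nu)+m_{s}-c_{s})^{2}-(f_{s}(\nu)+m'_{s}-c_{s})^{2}\bigr]$ with $c_{s}=a_{0}-(l+1)+s$; writing $d:=m-m'=(-1,2,-1,0,\dots)$, which annihilates affine-linear functions of $s$, the $c_{s}$-terms drop out and the remainder $\sum_{s}d_{s}(m_{s}+m'_{s})+2\sum_{s}d_{s}f_{s}(\nu)$ evaluates to $(6-2k)+2(k-3)=0$. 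Since the eigenvalue of $M_{k\omega_{1},\nu}(C)$ on $V_{\nu+\lambda_{m}}$ is $\tfrac12\bigl(\chi_{\nu+\lambda_{m}}(C)-\chi_{k\omega_{1}}(C)-\chi_{\nu}(C)\bigr)$, it follows that $M_{k\omega_{1},\nu}(C)$ takes equal values on the distinct summands $V_{\nu+\lambda_{m}}$ and $V_{\nu+\lambda_{m'}}$, as required.

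The hard part will be arranging (i), (ii) and (iii) at once: the perturbation $m\mapsto m'$ of a composition of $k$ must be small enough that both $\nu+\lambda_{m}$ and $\nu+\lambda_{m'}$ stay inside the support of the branching rule and remain distinct dominant weights, and yet it must preserve the quadratic form $\sum_{s}(f_{s}(\nu)+m_{s}-c_{s})^{2}$. If the perturbation is supported on only two coordinates, dominance of one of the two weights breaks down; one genuinely needs a third coordinate and a degree-$2$ Prouhet--Tarry--Escott type balance among three entries (cf. Lemma \ref{solution_of_degree_2}) --- which is exactly where $l\geq 2$ enters, and dually why for $l=1$ equality of Casimir eigenvalues between distinct weights cannot happen.
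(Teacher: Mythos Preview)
Your proof is correct, and the explicit witness you produce for the ``if'' direction---the weight $\nu=2\omega_{1}+(k-1)\omega_{2}$ together with the compositions $m=(0,2,k-2,0,\dots)$ and $m'=(1,0,k-1,0,\dots)$---coincides exactly (up to swapping $m$ and $m'$) with the one the paper constructs. The paper, however, arrives at this witness by a different route: it recasts the condition $\chi_{\mu}(C)=\chi_{\mu'}(C)$ as a degree-$2$ Prouhet--Tarry--Escott equation in the variables $x_{i}=f_{i}(\nu)+c_{i}-i$, $y_{i}=f_{i}(\nu)+c'_{i}-i$, invokes Lemma~\ref{PTE_exist} to force $l+1\geq 3$ for a non-trivial solution (yielding the ``only if'' direction), and then uses the explicit PTE solution of Lemma~\ref{solution_of_degree_2} to reverse-engineer $f_{i}$, $c_{i}$, $c'_{i}$.

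Your treatment of the ``only if'' direction via Proposition~\ref{smf_has_distinct_eigs} and the strong multiplicity freeness of all $\mathfrak{sl}_{2}$-modules is more economical than the PTE argument and entirely legitimate. Your treatment of the ``if'' direction is also somewhat more transparent: you simply write down $m$, $m'$ and verify the three conditions directly, whereas the paper presents the same data as coming out of the PTE machinery. The advantage of the paper's framing is conceptual---it explains \emph{why} a three-coordinate perturbation is forced (size $\geq$ degree $+1$) and gives a systematic way to manufacture witnesses---but your argument is complete as it stands.
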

	
	\begin{proof}
		Let $V_{\mu}$ be an arbitrary irreducible component occurring in the tensor decomposition $V_{k\omega_{1}} \otimes V_{\nu}$. 
		Write the  highest weight $\mu =\sum\limits_{i=1}^{l}b_{i}\omega_{i}$ with $b_{i} = a_{i} + c_{i} - c_{i+1}$, where $c_{i}$ satisfy the conditions in Lemma 5.2. 
		Let $[f(\mu)] = [f_{1}(\mu), \dots, f_{l+1}(\mu)]$ be the corresponding Young tableau of $\mu$, i. e. 
		$$f_{i}(\mu) = b_{i} + b_{i+1} + \dots + b_{l} = f_{i}(\nu) + c_{i} - c_{l+1} \   \text{for } i = 1, \dots, l,$$	
		$$f_{l+1}(\mu) = 0. $$
		Denote
		$$N = f_{1}(\nu) + \dots + f_{l+1}(\nu), \quad b_{0} = \frac{1}{l+1}(f_{1}(\mu) + \dots + f_{l+1}(\mu)),$$ 
		and set  
		$$\zeta_{i} = f_{i}(\mu) - b_{0} = f_{i}(\nu) + c_{i} - \frac{1}{l+1}(N+k)$$ 
		for $i = 1, \dots, l+1$. 
		Assume $V_{\mu'} =\sum\limits_{i=1}^{l}b_{i}' \omega_{i} $ is another irreducible component in  $V_{k\omega_{1}} \otimes V_{\nu}$ with $b_{i}' = a_{i} + c_{i}' - c_{i+1}'$, where $c_{i}'$ also satisfy the conditions in Lemma 5.2. 
		Then $\mu' \ne \mu$ since $k\omega_{1}$ is multiplicity free for $A_{l}$.
		Set 
		$$\zeta'_{i} = f_{i}(\nu) + c_{i}' - \frac{1}{l+1}(N+k)$$ 
		for $ i = 1, \dots, l+1$. 
		If the eigenvalues $\chi_{\mu}(C) = \chi_{\mu'}(C) $, by \eqref{C_{2} of A_{l}} we have 
		$$
		0 = \chi_{\mu}(C) - \chi_{\mu'}(C) = \sum_{i = 1}^{l+1} [(\zeta_{i} + (l+1) - i)^{2} - (\zeta'_{i} + (l+1) - i)^{2}] $$
		$$= \sum_{i = 1}^{l+1} [(f_{i}(\nu) + c_{i} - i)^{2} - (f_{i}(\nu) + c_{i}' - i)^{2}].
		$$
		Notice that
		$$
		\sum_{i = 1}^{l+1} (f_{i}(\nu) + c_{i} - i) = \sum_{i = 1}^{l+1} (f_{i}(\nu) + c_{i}' - i) = N + k - \frac{1}{2}(l+1)(l+2)
		$$
		is a constant and it is  not related to $c_{i}$ and $c_{i}'$.
		We write 
		$$x_{i} = f_{i}(\nu) + c_{i} - i, \quad y_{i} = f_{i}(\nu) + c_{i}' - i$$ 
		for $i = 1, \dots, l+1$.
		They satisfy the PTE problem
		\begin{equation*}
			\begin{cases}
					x_{1} + x_{2} + \dots + x_{l+1} = y_{1} + y_{2} + \dots + y_{l+1} \\
					x_{1}^{2} + x_{2}^2 + \dots + x_{l+1}^{2}= y_{1}^{2} + y_{2}^{2} + \dots + y_{l+1}^{2}
				\end{cases}.
		\end{equation*}
		Since the PTE problem with degree $2$ and size $l+1$ has a non-trivial integer solution $[x_{1}, x_{2}, \dots, x_{l+1}]$ and $[y_{1}, y_{2}, \dots, y_{l+1}]$, it follows that $l \geq 2$ by Lemma \ref{PTE_exist}.

		\medskip

		Now assume $l \geq 2$.
		For any fixed integer $k (k \geq 2)$, we define
		$$x_{1} = k+1, \quad x_{2} = k-3, \quad x_{3} = k-4,$$
		$$y_{1} = k, \quad y_{2} = k-1, \quad y_{3} = k-5,$$
		$$x_{i} = y_{i} = -i, \quad i = 4, \dots, l+1.$$
		Then, $[x_{1}, x_{2}, \dots, x_{l+1}]$ and $[y_{1}, y_{2}, \dots, y_{l+1}]$ is a non-trivial integral solution of the PTE problem with degree $2$ and size $l+1$  by Lemma \ref{solution_of_degree_2}.
		Set
		$$f_{1} = y_{2} + 2 = k+1, $$
		$$f_{2} = x_{3} + 3 = k-1, $$
		$$f_{i} = x_{i+1} + (i+1) = 0, \quad i = 3, \dots, l,$$
		and define
		$$c_{i} = x_{i} + i - f_{i}, \quad c_{i}' = y_{i} + i - f_{i}, \quad i = 1, \dots, l+1.$$
		Take $b_{i} = a_{i} + c_{i} - c_{i+1}$ and $b_{i}' = a_{i} + c_{i}' - c_{i+1}'$ for $i = 1, \dots, l$.
		It is easy to check that $c_{i}$ and $c_{i}'$ satisfy the conditions of the direct sum decomposition in Lemma 5.2.  
		Therefore, $V_{b_{1}\omega_{1} + \dots + b_{l}\omega_{l}}$ and $V_{b_{1}' \omega_{1} + \dots + b_{l}' \omega_{l}}$ are distinct irreducible components occurring in $V_{k\omega_{1}} \otimes V_{\nu}$ and they have the same $M_{k\omega_{1},\nu}(C)$-eigenvalues.
	\end{proof}
	
	Assume $V_{\lambda}$ is an irreducible $\mathfrak{sl}_{2}$-module of dimension great than $1$. For any  $\nu \in \mathcal{P}_{+}$, the eigenvalue of $M_{k\omega_{1},\nu}(C)$ acting on each irreducible $\mathfrak{sl}_{2}$-component of $V_{\lambda} \otimes V_{\nu}$ is distinct according to Proposition \ref{smf_has_distinct_eigs}.
	
	The similar result for $\lambda = k\omega_{l} (k \geq 2)$ can be proved by the same method as above, since $V_{k\omega_{l}}$ is dual to $V_{k\omega_{1}}$.

	\begin{prop} The equation   $2 \text{ht}(\lambda) + 1 = \dim V_{\lambda}$ holds for the multiplicity free $A_{l}$-module $V_{\lambda}$ if and only if $\lambda = \omega_{1}, \omega_{l}$ for $A_{l}$ and $k\omega_{1}$ for $A_{1}$.
	\end{prop}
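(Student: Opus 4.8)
The plan is a direct case-by-case computation over the multiplicity free $A_l$-modules, namely $V_{\omega_k}$ for $1\le k\le l$ together with $V_{k\omega_1}$ and $V_{k\omega_l}$ for $k\ge 1$ (the list of Theorem 2.2). First I would record the two quantities entering the equation. From the inverse Cartan matrix of $A_l=\frsl_{l+1}$ one has $\omega_k=\sum_{j=1}^{l}\frac{\min(k,j)\,(l+1-\max(k,j))}{l+1}\,\alpha_j$; splitting this sum at $j=k$ and evaluating the two resulting arithmetic progressions yields the closed form $\mathrm{ht}(\omega_k)=\tfrac{k(l+1-k)}{2}$, so that $2\,\mathrm{ht}(\omega_k)+1=k(l+1-k)+1$. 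Since $\mathrm{ht}$ (see \eqref{height}) is linear, this also gives $2\,\mathrm{ht}(k\omega_1)+1=kl+1$ and $2\,\mathrm{ht}(k\omega_l)+1=kl+1$. On the dimension side I use the standard facts $\dim V_{\omega_k}=\binom{l+1}{k}$ and $\dim V_{k\omega_1}=\dim V_{k\omega_l}=\binom{l+k}{l}$.

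For the fundamental modules, set $n=l+1$; the equation becomes $\binom{n}{k}=k(n-k)+1$. The values $k=1$ and $k=n-1$ give equality, producing the solutions $\omega_1$ and $\omega_l$ (which coincide for $A_1$). For $2\le k\le n-2$, which forces $n\ge 4$, unimodality of $\binom{n}{k}$ on the interval $[2,n-2]$ gives $\binom{n}{k}\ge\binom{n}{2}=\tfrac{n(n-1)}{2}$, whereas $k(n-k)+1\le\tfrac{n^2}{4}+1$; since $\tfrac{n(n-1)}{2}>\tfrac{n^2}{4}+1$ for all $n\ge 4$, no such $k$ can work, and so among the fundamental modules the equation holds precisely for $\omega_1$ and $\omega_l$.

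For $V_{k\omega_1}$ with $k\ge 2$ the equation reads $\binom{l+k}{l}=kl+1$. When $l=1$ both sides equal $k+1$, so every $V_{k\omega_1}$ (equivalently every irreducible $\frsl_2$-module) satisfies it, which yields the $A_1$ family. For $l\ge 2$ I would prove the strict inequality $\binom{l+k}{l}>kl+1$ by induction on $k$: the base case $k=2$ is $\tfrac{(l+1)(l+2)}{2}>2l+1$, i.e. $l(l-1)>0$; for the inductive step, Pascal's identity gives $\binom{l+k+1}{l}-\binom{l+k}{l}=\binom{l+k}{l-1}$, and for $k\ge 2,\ l\ge 2$ one has $\binom{l+k}{l-1}\ge\binom{l+2}{3}=\tfrac{l(l+1)(l+2)}{6}\ge l$, which is at least the increment $l$ of the right-hand side. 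The case $\lambda=k\omega_l$ is identical, since $V_{k\omega_l}\cong V_{k\omega_1}^{*}$ and height is invariant under dualization. Assembling the three computations shows the equation holds exactly for $\lambda=\omega_1,\omega_l$ over all $A_l$ and for $\lambda=k\omega_1$ over $A_1$, as claimed.

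The only points needing care are the two elementary estimates $\binom{n}{k}>k(n-k)+1$ on $2\le k\le n-2$ and $\binom{l+k}{l}>kl+1$ on $k\ge2,\ l\ge2$; both are dispatched by the unimodality-plus-quadratic argument and the Pascal-identity induction sketched above, so I do not expect a genuine obstacle — the bulk of the work is the bookkeeping of the height and dimension formulas.
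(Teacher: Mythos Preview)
Your proof is correct and, for the fundamental modules $V_{\omega_k}$, it is essentially identical to the paper's argument: both compute $\mathrm{ht}(\omega_k)=\tfrac{k(l+1-k)}{2}$ and $\dim V_{\omega_k}=\binom{l+1}{k}$, then use the same quadratic bound $k(n-k)+1\le\tfrac{n^2}{4}+1$ together with the unimodality estimate $\binom{n}{k}\ge\binom{n}{2}$ to rule out $2\le k\le n-2$.

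The genuine difference lies in the treatment of $V_{k\omega_1}$ (and $V_{k\omega_l}$) for $k\ge 2$ and $l\ge 2$. The paper fixes $k$ and studies $G(x)=kx+1-\binom{x+k}{k}$ as a real function of $x=l$, computing $G'$ and $G''$ to show $G$ is decreasing for $x\ge 1$ with $G(1)=0$, hence $G(l)<0$ for $l\ge 2$. You instead fix $l$ and induct on $k$ via Pascal's identity, comparing the increment $\binom{l+k}{l-1}$ of the left side to the increment $l$ of the right side. Your route is more elementary and stays entirely within integer arithmetic, avoiding the calculus of the paper's argument; the paper's approach, on the other hand, makes the monotonicity in the rank explicit. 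Both arrive at the same strict inequality $\binom{l+k}{l}>kl+1$, so either works.
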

	
	\begin{proof}

		By Weyl's dimension formula, we have $\dim V_{\omega_{k}} = \binom{l+1}{k}$ and $\dim V_{k\omega_{1}} = \dim V_{k \omega_{l}} = \binom{k+l}{l}$. 
		Since
		\[\omega_{k} = \frac{1}{l+1} [(l-k+1)\alpha_{1} + 2(l-k+1)\alpha_{2} + \dots + (k-1)(l-k+1)\alpha_{k-1}\]
		\[+ k(l-k+1)\alpha_{k} + k(l-k)\alpha_{k+1} + \dots + k\cdot 2 \alpha_{l-1} + k\alpha_{l}],\]
		we have $\text{ht}(\omega_{k}) = \frac{1}{2} k(l-k+1)$ and $\text{ht}(k\omega_{1}) = \text{ht}(k\omega_{1}) = \frac{1}{2}kl$.
		
		\medskip
		
		For $V_{\omega_{k}}$, consider the polynomial function 
		$$F(k) = k(l-k+1) + 1.$$ 
		If $k = 1$ or $l$, then $F(1) = F(l) = l+1$.
		It follows that $\omega_{1}$ and $\omega_{l}$ satisfy the equation $2 \text{ht}(\lambda) + 1 = \dim V_{\lambda}$  for any $l \geq 1$.
		For $2 \leq k \leq l-1$  and $l \geq 3$, we have 
		$$F(k) = -(k - \frac{l+1}{2})^{2} + \frac{(l+1)^{2}}{4} + 1 \leq  \frac{(l+1)^{2}}{4} + 1,$$
		$$\binom{l+1}{k} \geq \binom{l+1}{2} = \frac{(l+1)l}{2},$$
		$$F(k) - \binom{l+1}{2} \leq \frac{(l+1)^{2}}{4} + 1 -  \frac{(l+1)l}{2} = -\frac{l^{2} - 5}{4}.$$
		It is obvious that $F(k) < \binom{l+1}{k}$ for $l \geq 3$. 
		Hence,  $2 \text{ht}(\lambda) + 1 \neq \dim V_{\lambda}$  for  $\lambda=\omega_{2}, \dots, \omega_{l-1}$, $l \geq 3$.
		
		\medskip
		
		For $V_{k\omega_{1}}$ or $V_{k\omega_{l}} (k \geq 2)$, consider the polynomial function 
		$$G(x) = kx + 1 - \frac{(x+k) \cdots (x+2)(x+1)}{k!}.$$
		For $x \geq 0$, the derivative at $x$ satisfies
		$$G^{'}(x) = k - \frac{1}{k!}\sum_{i = 1}^{k} \frac{\prod_{j = 1}^{k}(x+j)}{x+i} < k - \frac{1}{k!}k\frac{(x+k) \cdots (x+1)}{x+k} = k - \binom{x+k-1}{k-1},$$
		and
		$$G^{''}(x) < 0.$$
		It is obvious that $G^{'}(x)$ is monotonically decreasing. 
		For $x \geq 1$,
		$$G^{'}(x) \leq G^{'}(1) < k - \binom{k}{k-1} = k - \binom{k}{1} = 0.$$
		It follows that $G(x)$ is monotonically decreasing for $x \geq 1$. 
		Thus we have
		$$G(1) = k+1 - \binom{k+1}{k} = 0,$$
		$$G(l) \leq G(2) = 2k+1 - \binom{k+2}{k} = -\frac{1}{2}k(k-1) < 0.$$
		The proof is completed.
	\end{proof}

	\subsection{$B_{l} (l \geq 2)$}
	
	Denote the  weight  $$\mu_{i}(\text{diag}(0, a_{1}, \dots, a_{l}, -a_{1}, \dots, -a_{l})) = a_{i}, \ i = 1, \dots, l.$$
	We know that $\Pi(V_{\omega_{1}}) = \{ 0, \mu_{1}, \dots, \mu_{l}, -\mu_{1}, \dots, -\mu_{l} \}$. And the spin module $V_{\omega_{l}}$ is an irreducible minuscule $B_{l}$-module. Its weights are of the form $\frac{1}{2}\sum_{i = 1}^{l} \varepsilon_{i}\mu_{i}$ with $\varepsilon_{i} = \pm 1$ for $i = 1, \dots, l$. 
	Let $\nu = a_{1}\omega_{1} + \dots + a_{l}\omega_{l} \in \mathcal{P}_{+}$ and define
	$$\tilde{f}_{l} = \frac{1}{2}a_{l}, \quad \tilde{f}_{s} = \sum_{t = s}^{l-1} a_{t} + \frac{1}{2}a_{l}, \quad s = 1, \dots, l-1.$$

	\begin{prop}
		The $M_{\omega_{l}, \nu}(C)$-eigenvalues of all irreducible components of $V_{\omega_{l}} \otimes V_{\nu}$ are distinct for any $\nu \in \mathcal{P}_{+}$ if and only if $l = 2$.
	\end{prop}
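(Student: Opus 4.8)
The plan is to translate the whole question into the combinatorics of the sign patterns $\varepsilon\in\{\pm1\}^{l}$ indexing the weights of the spin module. Since $\omega_{l}$ is minuscule, the Kass decomposition recalled in Section~2 gives $V_{\omega_{l}}\otimes V_{\nu}=\bigoplus V_{\nu+\lambda_{i}}$, the sum over $\lambda_{i}\in\Pi(V_{\omega_{l}})$ with $\nu+\lambda_{i}\in\mathcal{P}_{+}$, and by the formula for the $M_{\lambda,\nu}(C)$-eigenvalues (following Gould) the eigenvalue of $M_{\omega_{l},\nu}(C)$ on $V_{\nu+\lambda_{i}}$ is $f_{C,i}(\nu)=\tfrac{1}{2}(\chi_{\nu+\lambda_{i}}(C)-\chi_{\omega_{l}}(C)-\chi_{\nu}(C))$. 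Writing a weight of $V_{\omega_{l}}$ as $\tfrac{1}{2}\varepsilon=\tfrac{1}{2}\sum_{s}\varepsilon_{s}\mu_{s}$ and using $\chi_{\mu}(C)=(\mu,\mu+2\delta)$ (normalizing the invariant form so that $(\mu_{s},\mu_{t})=\delta_{st}$, which is harmless since only differences of $\chi$'s matter), one computes
\[
2\bigl(f_{C,i}(\nu)-f_{C,j}(\nu)\bigr)=\bigl(\tfrac{1}{2}(\varepsilon-\varepsilon'),\;2\nu+\tfrac{1}{2}(\varepsilon+\varepsilon')+2\delta\bigr).
\]
With $\nu=\sum_{s}\tilde f_{s}\mu_{s}$ and $2\delta=\sum_{s}(2l-2s+1)\mu_{s}$, and putting $D=\{s:\varepsilon_{s}\neq\varepsilon'_{s}\}$, the coordinates outside $D$ drop out of the left factor while $\varepsilon_{s}+\varepsilon'_{s}=0$ for $s\in D$, so this pairing collapses to $\sum_{s\in D}\varepsilon_{s}A_{s}$, where $A_{s}:=2\tilde f_{s}+2l-2s+1>0$. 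Hence two distinct summands of $V_{\omega_{l}}\otimes V_{\nu}$ carry the same $M_{\omega_{l},\nu}(C)$-eigenvalue exactly when there exist $\varepsilon\neq\varepsilon'$ with $\nu+\tfrac{1}{2}\varepsilon,\,\nu+\tfrac{1}{2}\varepsilon'\in\mathcal{P}_{+}$ and $\sum_{s\in D}\varepsilon_{s}A_{s}=0$.

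For $l=2$ I would argue that no such coincidence can occur, since $|D|\le 2$. If $|D|=1$ the sum is $\pm A_{s}\neq 0$. If $D=\{p,q\}$ with $p<q$, then $\tilde f_{p}\ge\tilde f_{q}$ forces $A_{p}=2\tilde f_{p}+2l-2p+1>2\tilde f_{q}+2l-2q+1=A_{q}>0$, so none of $A_{p}+A_{q}$ or $A_{p}-A_{q}$ vanishes. Thus for $B_{2}$ the eigenvalues of $M_{\omega_{2},\nu}(C)$ are distinct on all summands, for every $\nu$.

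For $l\ge 3$ I would produce an explicit coincidence. Take $\nu=(l-2)\omega_{1}+3\omega_{l}\in\mathcal{P}_{+}$, for which $\tilde f_{1}=l-\tfrac{1}{2}$ and $\tilde f_{2}=\cdots=\tilde f_{l}=\tfrac{3}{2}$, and choose $\varepsilon=(+,-,-,-,\dots,-)$ and $\varepsilon'=(-,+,+,-,\dots,-)$, which differ exactly on $D=\{1,2,3\}$. In the coordinates $\mu_{1},\dots,\mu_{l}$ one gets $\nu+\tfrac{1}{2}\varepsilon=(l,1,1,\dots,1)$ and $\nu+\tfrac{1}{2}\varepsilon'=(l-1,2,2,1,\dots,1)$, both of which are distinct dominant integral weights (the only inequality that is not automatic, $l-1\ge 2$, holds because $l\ge 3$), so $V_{\nu+\frac{1}{2}\varepsilon}$ and $V_{\nu+\frac{1}{2}\varepsilon'}$ are two distinct summands of $V_{\omega_{l}}\otimes V_{\nu}$. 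Finally $A_{1}=2(l-\tfrac{1}{2})+2l-1=4l-2$, while $A_{2}=3+2l-3=2l$ and $A_{3}=3+2l-5=2l-2$, so $\varepsilon_{1}A_{1}+\varepsilon_{2}A_{2}+\varepsilon_{3}A_{3}=A_{1}-A_{2}-A_{3}=0$, i.e.\ these two summands share an eigenvalue of $M_{\omega_{l},\nu}(C)$. Combining the two cases yields the proposition.

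The step I expect to be the genuine obstacle is the $l\ge 3$ construction. The scalar identity $\sum_{s\in D}\varepsilon_{s}A_{s}=0$ with $|D|=3$ is just the one-variable Prouhet--Tarry--Escott relation $A_{1}=A_{2}+A_{3}$ and is easy to realize in isolation; the delicate point is to realize it while keeping \emph{both} translated weights $\nu+\tfrac{1}{2}\varepsilon$ and $\nu+\tfrac{1}{2}\varepsilon'$ dominant, since flipping the three coordinates in $D$ in opposite directions tends to break the weakly-decreasing condition. The choice above is engineered precisely so that every coordinate outside $D$ carries the sign $-1$, which keeps both coordinate sequences weakly decreasing; verifying that $\nu=(l-2)\omega_{1}+3\omega_{l}$ makes the resulting first three coordinates fit together correctly is the one place where a short explicit check is unavoidable.
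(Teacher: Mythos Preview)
Your proof is correct and follows essentially the same approach as the paper: both reduce the eigenvalue difference to the expression $\sum_{s\in D}\varepsilon_{s}(2\tilde f_{s}+2l-2s+1)$, dispose of $l=2$ by noting that with $|D|\le 2$ no signed combination of the $A_{s}$ can vanish, and for $l\ge 3$ produce a three-term collision $A_{p}=A_{q}+A_{r}$. The only cosmetic difference is the choice of witness for $l\ge 3$: the paper flips the last three coordinates $\{l-2,l-1,l\}$, imposes $2a_{l-2}+1=a_{l}$, and appeals to ``sufficiently large'' $a_{i}$ for dominance, whereas you flip the first three coordinates and give the explicit weight $\nu=(l-2)\omega_{1}+3\omega_{l}$ with dominance checked by hand.
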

	
	\begin{proof}
		Let $\varepsilon = \frac{1}{2}\sum_{i = 1}^{l} \varepsilon_{i}\mu_{i}$ and $\eta = \frac{1}{2}\sum_{i = 1}^{l} \eta_{i}\mu_{i}$ be two distinct weights of $V_{\omega_{l}}$. 
		From \eqref{B_{l}} we obtain that the coordinates $\zeta_{i}$ corresponding to the irreducible component $V_{\nu + \varepsilon}$ in $V_{\omega_{l}} \otimes V_{\nu}$ are of the form $\tilde{f}_{i} + \frac{1}{2} \varepsilon_{i}$ for $i = 1, \dots, l$.
			By \eqref{C_{2} of B_{l}} we have
		$$\chi_{\nu + \varepsilon}(C) - \chi_{\nu + \eta}(C)$$
		$$= \sum_{i = 1}^{l} [(\tilde{f}_{i} + \frac{1}{2}\varepsilon_{i} - i + 2l)^{2} - (\tilde{f}_{i} + \frac{1}{2}\eta_{i} - i + 2l)^{2}] + [(\tilde{f}_{i} + \frac{1}{2}\varepsilon_{i} - i + 1)^{2} - (\tilde{f}_{i} + \frac{1}{2}\eta_{i} - i + 1)^{2}] $$
		$$= \sum_{i = 1}^{l} (\varepsilon_{i} - \eta_{i})(2\tilde{f}_{i} + 2(l-i) + 1). $$
		Then $\varepsilon_{i} - \eta_{i} = 0$ or $2, -2$.
		
		\medskip
		
		If $l = 2$, the above expression  is equal to $0$ if and only if
		$$(\varepsilon_{1} - \eta_{1})(2\tilde{f}_{1} + 3) + (\varepsilon_{2} - \eta_{2})(2\tilde{f}_{2} + 1) = 0.$$
		But it is never equal to $0$ for any $a_{1}, a_{2} \in \mathbb{N}$. 
		
		\medskip
		
		If $l \geq 3$, let $\varepsilon_{i} = \eta_{i}$ for $i = 1, \dots, l-3$. Then the above equation  is equal to $0$ if and only if
		$$(\varepsilon_{l-2} - \eta_{l-2})(2\tilde{f}_{l-2} + 5) + (\varepsilon_{l-1} - \eta_{l-1})(2\tilde{f}_{l-1} + 3) + (\varepsilon_{l} - \eta_{l})(2\tilde{f}_{l} + 1) = 0.$$
		Set 
		$$\varepsilon_{l-2} - \eta_{l-2} = 2, \quad \varepsilon_{l-1} - \eta_{l-1} = \varepsilon_{l} - \eta_{l} = -2.$$ 
		Then,
		$$2a_{l-2} + 1 = a_{l}.$$
		If $a_{i}, i = 1, \dots, l$ satisfy $2a_{l-2} + 1 = a_{l}$ and are sufficiently large such that $\nu + \varepsilon, \nu + \eta \in \mathcal{P}_{+}$, then irreducible $B_{l}$-modules $V_{\nu + \varepsilon}$ and $V_{\nu + \eta}$ occurring in $V_{\omega_{l}} \otimes V_{\nu}$ have the same $M_{\omega_{l}, \nu}(C)$-eigenvalue for $l \geq 3$.

	\end{proof}

	\begin{prop}
		For the  multiplicity free $B_{l}$-module $V_{\lambda}$, the equation 
		 $2 \text{ht}(\lambda) + 1 = \dim V_{\lambda}$ holds if and only if $\lambda = \omega_{1}$.
	\end{prop}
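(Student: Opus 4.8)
The plan is straightforward: by Theorem~2.2 the only multiplicity free $B_l$-modules ($l\ge 2$) are the vector module $V_{\omega_1}$ and the spin module $V_{\omega_l}$, so the statement reduces to a finite check --- compute $\dim V_\lambda$ and $\text{ht}(\lambda)$ for these two weights and compare $\dim V_\lambda$ with $2\,\text{ht}(\lambda)+1$. I will use Weyl's dimension formula for the dimensions and the weight coordinates $\mu_i$ introduced above for the heights, recalling that in the simple-root basis $\mu_i=\alpha_i+\alpha_{i+1}+\cdots+\alpha_l$ (with $\alpha_l$ the short simple root).

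First I treat $V_{\omega_1}$: here $\dim V_{\omega_1}=2l+1$, and since $\omega_1=\mu_1=\alpha_1+\cdots+\alpha_l$ we have $\text{ht}(\omega_1)=l$, so that $2\,\text{ht}(\omega_1)+1=2l+1=\dim V_{\omega_1}$ and the equation holds for every $l\ge 2$.

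Next the spin module $V_{\omega_l}$: here $\dim V_{\omega_l}=2^{l}$, while $\omega_l=\tfrac12(\mu_1+\cdots+\mu_l)=\tfrac12\sum_{j=1}^{l}j\,\alpha_j$, hence $\text{ht}(\omega_l)=\tfrac14 l(l+1)$ and $2\,\text{ht}(\omega_l)+1=\tfrac12 l(l+1)+1$. Thus it suffices to show $2^{l}\ne\tfrac12 l(l+1)+1$ for the relevant $l$; for $l\ge 3$ this is the strict inequality $2^{l}>\tfrac12 l(l+1)+1$, which I would prove by a one-line induction: the base case $l=3$ reads $8>7$, and the step follows since, assuming $2^{l}>\tfrac12 l(l+1)+1$, we get $2^{l+1}=2\cdot 2^{l}>l(l+1)+2\ge \tfrac12(l+1)(l+2)+1$, the last inequality reducing to $l(l-1)\ge 0$. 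Combining the two cases gives the asserted equivalence.

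The computation carries no real difficulty; the one point demanding care is the rank-two case, where $\tfrac12 l(l+1)+1=4=2^{2}$, so $V_{\omega_2}$ also satisfies the numerical identity. This is an artifact of the exceptional isomorphism $B_2\cong C_2$: under it $V_{\omega_2}$ is the defining $4$-dimensional module of $C_2$, which is itself strongly multiplicity free, so this is consistent with the classification --- the clean statement is to be read for $l\ge 3$, with $B_2$'s $4$-dimensional module accounted for in type $C$. Apart from this, the only thing one must get right is the normalization of $\text{ht}$, i.e. the expansions of $\omega_1$ and $\omega_l$ in the basis of simple roots.
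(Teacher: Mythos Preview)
Your proof is correct and follows essentially the same route as the paper: reduce to the two candidates $\omega_1$ and $\omega_l$, compute $\dim V_\lambda$ and $\text{ht}(\lambda)$, and compare. The only technical difference is that the paper establishes the inequality $2^l>\tfrac12 l(l+1)+1$ for $l\ge 3$ by calculus---setting $F_B(x)=\tfrac12 x^2+\tfrac12 x+1-2^x$ and showing $F_B''<0$, $F_B'(2)<0$, hence $F_B(l)<F_B(2)=0$---whereas you give a one-line induction; your argument is arguably cleaner. Your explicit treatment of the rank-two case is also well placed: the paper's own proof notes at the end that $l=2$ satisfies the numerical identity but does not spell out the $B_2\cong C_2$ explanation you give.
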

	
	\begin{proof}

		By Weyl's dimension formula, we have $\dim V_{\omega_{1}} = 2l+1$ and $\dim V_{\omega_{l}} = 2^{l}$. 
	Using the inverse of the Cartan matrix, we know that 
		$$\omega_{1} = \alpha_{1} + \dots + \alpha_{l},$$ 
		$$\omega_{l} = \frac{1}{2}(\alpha_{1} + 2\alpha_{2} + \dots + l\alpha_{l}).$$ 
		Then $\text{ht}(\omega_{1}) = l$ and $\text{ht}(\omega_{l}) = \frac{(l+1)l}{4}$. 
		
		\medskip
		
		For $V_{\omega_{1}}$, we have $2\text{ht}(\omega_{1}) + 1 = 2l+1 = \dim V_{\omega_{1}}$. 
		
		\medskip
		
		For $V_{\omega_{l}}$, consider the polynomial function 
		$$F_{B}(x) = \frac{1}{2}x^{2} + \frac{1}{2}x + 1 - 2^{x}$$
		and its derivatives are of the form
		$$F_{B}^{'}(x) = x + \frac{1}{2} - 2^{x} \text{ln}2,$$
		$$F_{B}^{''}(x) = -2^{x}(\text{ln}2)^{2} < 0.$$
		Hence $F_{B}^{'}(x)$ is monotonically decreasing. 
		For $x \geq 2$, we have
		$$F_{B}^{'}(x) \leq F_{B}^{'}(2) = \frac{5}{2} - 4\text{ln}2 < 0,$$
		thus $F_{B}(x)$ is also monotonically decreasing for $x \geq 2$. 
		Then we know
		$$F_{B}(l) < F_{B}(2) = 0$$
		for $l \geq 3$. 
		Only the case $l = 2$ satisfies the equation $2\text{ht}(\lambda) + 1 = \dim V_{\lambda}$.
		The proof is completed.
	\end{proof}

	\subsection{$C_{l} (l \geq 3)$}

    Denote the weight   
    $$\mu_{i}(\text{diag}(a_{1}, a_{2}, a_{3}, -a_{1}, -a_{2}, -a_{3}))= a_{i}, \ i = 1, 2, 3.$$
	The weight set for $C_{3}$-module $V_{\omega_{3}}$ is 
	$$\Pi(V_{\omega_{3}}) = \{ \pm \mu_{1},  \pm \mu_{2},  \pm \mu_{3},  \pm (\mu_{1} + \mu_{2} + \mu_{3}), \pm (\mu_{1} + \mu_{2} - \mu_{3}), $$ 
	$$\pm (\mu_{1} - \mu_{2} + \mu_{3}), \pm (\mu_{1} - \mu_{2} - \mu_{3})\}.$$ 
	According to Theorem 4.6 in [VD], we have the following proposition: 
	
	\begin{prop}
		For  $\nu = \sum\limits_{i=1}^{3}a_{i}\omega_{i} \in \mathcal{P}_{+}$, the decomposition of $C_{3}$-module $V_{\omega_{3}} \otimes V_{\nu}$ is of the form
		$$
		V_{\omega_{3}} \otimes V_{\nu} = \sum_{\substack{J \subseteq \{1, 2, 3\} \\ |J| = 1, 3}} \sum_{\substack{\varepsilon_{j} \in \{1, -1\} \\ j \in J \\ \nu + e_{\varepsilon J} \in \mathcal{P}_{+}}} \biggl( \sum_{\substack{|J| \leq s \leq 3 \\ 3-s \textrm{ is even}}} C_{3, s}^{(3)} \cdot N_{1^{s}, e_{\varepsilon J}}(\rho + \nu) \biggr) V_{\nu + e_{\varepsilon J}},
		$$
		where
		$$
		e_{\varepsilon J} = \sum_{j \in J} \varepsilon_{i} \mu_{i}, \quad J \subseteq \{1, 2, 3\}, \quad \varepsilon_{1}, \varepsilon_{2}, \varepsilon_{3} \in \{1, -1\},
		$$
		and the coefficients above are given by
		$$
		N_{1^{s}, e_{\varepsilon J}}(\rho + \nu) = 2^{s - |J|} \binom{3 - |J|}{s - |J|} - \# \{e_{\varepsilon I} | I \subseteq \{1, 2, 3\} \setminus J,
		$$
		$$ |I| = s - |J|, \ \text{ and } \exists \ i, i+1 \in I, \text{ s.t. } a_{i} = 0, -\varepsilon_{i} = \varepsilon_{i+1} = 1\},$$
		$$
		C_{3, s}^{(3)} = 4^{3 - s}\dfrac{(1)_{3-s} (\frac{3}{2})_{3-s}}{(3-s)! (3)_{3-s}} - \sum_{\substack{1 \leq k \leq 2-s \\ k \text{ odd}}} 4^{k} \dfrac{(1)_{k} (\frac{3}{2})_{k}}{k! (3)_{k}} K_{3-k, 3-s-k}^{(3)},
		$$
		where $(z)_{k} = z(z+1)(z+2) \cdots (z+k-1)$ for $z \in \mathbb{R}$ and $k \in \mathbb{N}^{*}$(with the convention that $(z)_{0} = 1$), and
		$$
		K_{r, l}^{(3)} = (-1)^{\frac{1}{2} (l-1)} 2^{l} \binom{3-r+l}{l}T_{l}
		$$
		for $1 \leq l \leq r$ with $l$ odd, and
		$$
		T_{2k+1} = \sum_{1 \leq s \leq 2k+1} (-1)^{k+s+1} 2^{2k+1-s} \bigg( \sum_{1 \leq t \leq s} (-1)^{s-t} \binom{s}{t} t^{2k+1} \bigg).
		$$
	\end{prop}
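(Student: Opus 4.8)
The plan is to obtain this as the specialization to $\mathfrak{g} = \mathfrak{sp}_{6}$, $\lambda = \omega_{3}$ of the general symplectic tensor-product rule [VD, Theorem 4.6], which decomposes $V_{\omega_{k}} \otimes V_{\nu}$ for $\mathfrak{sp}_{2n}$. First I would recall that for $C_{3}$ the module $V_{\omega_{3}}$ is the $14$-dimensional primitive summand of $\Lambda^{3}(\mathbb{C}^{6})$, and that its weight set is precisely $\{e_{\varepsilon J} : J \subseteq \{1,2,3\},\ |J| \in \{1,3\},\ \varepsilon_{j} \in \{1,-1\}\}$ with $e_{\varepsilon J} = \sum_{j \in J} \varepsilon_{j}\mu_{j}$; the absence of $|J| = 2$ reflects exactly the removal of the $V_{\omega_{1}}$-component $\mu_{i} - \mu_{i}$ from $\Lambda^{3}(\mathbb{C}^{6})$. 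In particular every irreducible summand of $V_{\omega_{3}} \otimes V_{\nu}$ has the form $V_{\nu + e_{\varepsilon J}}$ with $\nu + e_{\varepsilon J} \in \mathcal{P}_{+}$, and the content of the proposition is the multiplicity attached to each such summand.

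Next I would import from [VD] the general form of these multiplicities, which is assembled from two ingredients. The first is the family of structure constants $C_{k,s}^{(n)}$, which record how the $s$-fold primitive exterior powers of the standard module build up inside the symplectic category; these satisfy the Pochhammer-symbol recursion involving the correction terms $K_{r,l}^{(n)}$, and the latter are in turn expressed through the alternating sums $T_{2m+1}$. The second ingredient is the combinatorial count $N_{1^{s}, e_{\varepsilon J}}(\rho + \nu)$, which counts the admissible ways of reaching $e_{\varepsilon J}$ as an $s$-step Pieri-type walk in the weight lattice while discarding those intermediate steps that are annihilated by a Weyl reflection when $\nu$ lies on a wall. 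Setting $n = k = 3$ turns $C_{k,s}^{(n)}$ into the displayed $C_{3,s}^{(3)}$, forces $|J| \le s \le 3$, and — because the primitive exterior powers of the symplectic standard module occur only in degrees of a fixed parity — imposes the constraint that $3 - s$ be even. The failure of an intermediate weight $\nu + e_{\varepsilon I}$ to be dominant translates, in the $\mu_{i}$-coordinates of $C_{3}$, into exactly the obstruction ``$\exists\, i,\, i+1 \in I$ with $a_{i} = 0$ and $-\varepsilon_{i} = \varepsilon_{i+1} = 1$'', which is the term subtracted in the stated formula for $N_{1^{s}, e_{\varepsilon J}}(\rho + \nu)$.

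Having reduced to bookkeeping, I would then verify that the Pochhammer products $4^{3-s}(1)_{3-s}(\tfrac{3}{2})_{3-s}/\bigl((3-s)!\,(3)_{3-s}\bigr)$ together with the nested sums defining $K_{3-k,\,3-s-k}^{(3)}$ and $T_{2k+1}$ reproduce, after substitution, precisely the constants occurring in [VD] at rank $3$; this is a mechanical but slightly delicate manipulation of binomial and Pochhammer identities. As an independent check I would test the formula on small $\nu$: taking $\nu = 0$ must recover $V_{\omega_{3}}$ itself, and $\nu = \omega_{1}$ must recover the known decomposition of $V_{\omega_{1}} \otimes V_{\omega_{3}}$ for $C_{3}$, comparing dimensions via Weyl's dimension formula.

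The main obstacle will be matching the correction term $N_{1^{s}, e_{\varepsilon J}}(\rho + \nu)$ exactly: one must compare [VD]'s general description of which intermediate weights get reflected against a wall with the explicit $C_{3}$-dominance inequalities for $\nu + e_{\varepsilon I}$, and in particular confirm that the only failures arise from consecutive indices $i, i+1 \in I$ together with $a_{i} = 0$ and the sign pattern $\varepsilon_{i} = -1$, $\varepsilon_{i+1} = +1$. Once the general theorem of [VD] is in hand, the remaining pieces — the weight set of $V_{\omega_{3}}$, the parity constraint $3 - s$ even, and the constants $C_{3,s}^{(3)}$ — follow routinely.
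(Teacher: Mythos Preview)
Your approach is exactly what the paper does: the proposition is stated in the paper with the preamble ``According to Theorem 4.6 in [VD], we have the following proposition'' and is given no independent proof, so it is simply a direct specialization of van Diejen's general symplectic Pieri rule to $n=k=3$. Your additional verification steps (identifying the weight set of $V_{\omega_3}$, checking the parity constraint, and testing small $\nu$) are reasonable elaborations, but the paper itself omits all of this and relies entirely on the citation.
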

	
	\begin{cor}\label{mult_C_{3}_omega3}
		If $\nu + \mu_{2}, \nu + (\mu_{1} - \mu_{2} + \mu_{3}) \in \mathcal{P}_{+}$, then the multiplicities of them occurring in $V_{\omega_{3}} \otimes V_{\nu}$ are equal to $1$.
	\end{cor}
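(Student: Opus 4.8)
The plan is to specialize the decomposition formula of Proposition 5.8 to the two weights $\mu_{2}$ and $\mu_{1}-\mu_{2}+\mu_{3}$ of $V_{\omega_{3}}$ and simply read off the coefficient of the corresponding irreducible component. Since the hypothesis guarantees $\nu+\mu_{2},\,\nu+(\mu_{1}-\mu_{2}+\mu_{3})\in\mathcal{P}_{+}$, these components do occur in that sum, so the task reduces to evaluating a short list of the structure constants $C_{3,s}^{(3)}$ and the correction terms $N_{1^{s},e_{\varepsilon J}}(\rho+\nu)$. First I would record which terms survive. For $e_{\varepsilon J}=\mu_{2}$ one has $J=\{2\}$, hence $|J|=1$ and the inner sum runs only over the odd values $s=1,3$, giving multiplicity $C_{3,1}^{(3)}N_{1^{1},\mu_{2}}(\rho+\nu)+C_{3,3}^{(3)}N_{1^{3},\mu_{2}}(\rho+\nu)$. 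For $e_{\varepsilon J}=\mu_{1}-\mu_{2}+\mu_{3}$ one has $J=\{1,2,3\}$, hence $|J|=3$ and only $s=3$ survives, giving multiplicity $C_{3,3}^{(3)}N_{1^{3},e_{\varepsilon J}}(\rho+\nu)$.

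Next I would evaluate the constants. From the defining formula $C_{3,3}^{(3)}=1$, because the leading term equals $1$ and the correction sum is empty; a direct computation with the Pochhammer symbols, together with $T_{1}=1$ and $K_{2,1}^{(3)}=4$, gives $C_{3,1}^{(3)}=5-8=-3$. For the $N$-terms the key observation is that the set subtracted in the definition of $N_{1^{s},e_{\varepsilon J}}$ is indexed only by subsets $I\subseteq\{1,2,3\}\setminus J$ of size $s-|J|$ that contain a pair of consecutive indices, and in all three cases needed here no such $I$ exists: either $s=|J|$ forces $I=\varnothing$, or the only admissible subset $I=\{1,3\}\subseteq\{1,3\}$ (the case $J=\{2\}$, $s=3$) contains no consecutive pair. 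Consequently $N_{1^{s},e_{\varepsilon J}}(\rho+\nu)=2^{s-|J|}\binom{3-|J|}{s-|J|}$ in each case, so $N_{1^{1},\mu_{2}}=1$, $N_{1^{3},\mu_{2}}=2^{2}\binom{2}{2}=4$, and $N_{1^{3},\mu_{1}-\mu_{2}+\mu_{3}}=2^{0}\binom{0}{0}=1$.

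Combining, the multiplicity of $V_{\nu+\mu_{2}}$ is $(-3)(1)+(1)(4)=1$ and that of $V_{\nu+(\mu_{1}-\mu_{2}+\mu_{3})}$ is $(1)(1)=1$, as claimed. The verification itself is routine once the contributing terms have been isolated; the only point requiring care is the bookkeeping among the nested definitions in Proposition 5.8, in particular that the consecutive-index obstruction in $N_{1^{s},\cdot}$ is tested only on the complement of $J$, so that the mixed sign pattern $\varepsilon=(+,-,+)$ carried by $J=\{1,2,3\}$ in the second weight is irrelevant. I expect this bookkeeping, rather than any substantive difficulty, to be the main thing to get right.
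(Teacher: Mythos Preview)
Your proposal is correct and follows essentially the same route as the paper's own proof: specialize the Pieri-type decomposition to the two weights, identify the surviving pairs $(s,|J|)$, and compute the constants to obtain $-3\cdot 1+1\cdot 4=1$ and $1\cdot 1=1$. The only difference is that you spell out the intermediate evaluations of $T_{1}$, $K_{2,1}^{(3)}$, and the vanishing of the correction terms in $N_{1^{s},e_{\varepsilon J}}$, whereas the paper simply records the final numerical values.
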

	
	\begin{proof}
		Let $n_{\nu + \mu_{2}}$ be the multiplicity of $\nu + \mu_{2}$ and $n_{\nu + (\mu_{1} - \mu_{2} + \mu_{3})}$ be the multiplicity of $ \nu + (\mu_{1} - \mu_{2} + \mu_{3})$. Then
		$$
		n_{\nu + \mu_{2}} = C_{3, 1}^{(3)} N_{1^{1}, e_{(0, 1, 0)}} (\rho + \nu) + C_{3,3}^{(3)} N_{1^{3}, e_{(0, 1, 0)}} (\rho + \nu) = -3 \cdot 1 + 1 \cdot 4 = 1, $$
		and
		$$n_{\nu + (\mu_{1} - \mu_{2} + \mu_{3})} = C_{3,3}^{(3)} N_{1^{3}, e_{(1, -1, 1)}} (\rho + \nu) = 1 \cdot 1 = 1.
		$$
	\end{proof}
	
	\begin{prop}
		There exists some weight $\nu \in \mathcal{P}_{+}$ so that $\nu + \mu_{2}, \nu + (\mu_{1} - \mu_{2} + \mu_{3}) \in \mathcal{P}_{+}$ and irreducible $C_{3}$-components $V_{\nu + \mu_{2}}$ and $V_{\nu + (\mu_{1} - \mu_{2} + \mu_{3})}$ of $V_{\omega_{3}} \otimes V_{\nu}$ have the same $M_{\omega_{3}, \nu}(C)$-eigenvalues.
	\end{prop}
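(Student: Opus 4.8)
The plan is to reduce the statement to a single equality of infinitesimal characters and then solve it explicitly. Observe first that $\mu_2$ and $\mu_1-\mu_2+\mu_3$ are genuinely distinct elements of $\Pi(V_{\omega_3})$ (both appear in the weight set listed above, and $2\mu_2\neq\mu_1+\mu_3$). By Gould's description of the spectrum of $M_{\lambda,\nu}(C)$ recalled above, the eigenvalue of the semisimple operator $M_{\omega_3,\nu}(C)$ on the summand $V_{\nu+\mu_2}$ is $f_{C,i}(\nu)=\tfrac12(\chi_{\nu+\mu_2}(C)-\chi_{\omega_3}(C)-\chi_\nu(C))$, and similarly on $V_{\nu+(\mu_1-\mu_2+\mu_3)}$; hence the two eigenvalues coincide if and only if $\chi_{\nu+\mu_2}(C)=\chi_{\nu+(\mu_1-\mu_2+\mu_3)}(C)$. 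So I would first produce a dominant $\nu$ realizing this equality and for which both shifted weights are dominant, and then invoke Corollary~\ref{mult_C_{3}_omega3} to know these are honest multiplicity-one components.

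For the equality, write $\Lambda_1=\nu+\mu_2$, $\Lambda_2=\nu+(\mu_1-\mu_2+\mu_3)$ and use $\chi_\Lambda(C)=(\Lambda,\Lambda+2\delta)$, so that
\[
\chi_{\Lambda_1}(C)-\chi_{\Lambda_2}(C)=(\Lambda_1-\Lambda_2,\ \Lambda_1+\Lambda_2+2\delta)=(-\mu_1+2\mu_2-\mu_3,\ 2\nu+\mu_1+\mu_3+2\delta).
\]
Since this is a comparison of two scalars, the overall normalization of the invariant form is irrelevant, so I would compute in the standard coordinates with $(\mu_i,\mu_i)=1$ and $(\mu_i,\mu_j)=0$ for $i\neq j$, in which the half-sum of positive roots is $3\mu_1+2\mu_2+\mu_3$. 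Writing $\nu=c_1\mu_1+c_2\mu_2+c_3\mu_3$ with $c_1\ge c_2\ge c_3\ge0$ (the dominance condition for $C_3$), a direct expansion gives $\chi_{\Lambda_1}(C)-\chi_{\Lambda_2}(C)=2(2c_2-c_1-c_3-1)$, so the eigenvalues agree exactly when $c_1+c_3+1=2c_2$. It then remains to impose that $\nu+\mu_2=c_1\mu_1+(c_2+1)\mu_2+c_3\mu_3$ and $\nu+(\mu_1-\mu_2+\mu_3)=(c_1+1)\mu_1+(c_2-1)\mu_2+(c_3+1)\mu_3$ are both dominant, which amounts to $c_1\ge c_2+1$ and $c_2\ge c_3+2$; one checks these are consistent with $c_1+c_3+1=2c_2$, and the choice $c_3=0,\ c_2=2,\ c_1=3$, i.e. $\nu=\omega_1+2\omega_2$, works, giving $\nu+\mu_2=3\mu_1+3\mu_2$ and $\nu+(\mu_1-\mu_2+\mu_3)=4\mu_1+\mu_2+\mu_3$, both dominant.

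Finally, since $\nu+\mu_2$ and $\nu+(\mu_1-\mu_2+\mu_3)$ lie in $\mathcal{P}_+$, Corollary~\ref{mult_C_{3}_omega3} guarantees each occurs in $V_{\omega_3}\otimes V_\nu$ with multiplicity one, so $M_{\omega_3,\nu}(C)$ acts by a single scalar $f_{C,i}(\nu)$ on each, and by the identity above these two scalars coincide; this proves the proposition, and in fact exhibits an infinite family of such $\nu$ (any $c_3\ge0$, $c_2\ge c_3+2$, $c_1=2c_2-c_3-1$). I do not expect a genuine obstacle: the only thing requiring care is the routine bookkeeping between the $\omega$- and $\mu$-coordinates and keeping track of the dominance inequalities, while the eigenvalue identity itself drops out immediately once the Casimir formula is substituted.
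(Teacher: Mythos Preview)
Your argument is correct and follows essentially the same route as the paper: reduce to the equality $\chi_{\nu+\mu_2}(C)=\chi_{\nu+(\mu_1-\mu_2+\mu_3)}(C)$, solve it, check dominance, and invoke Corollary~\ref{mult_C_{3}_omega3}. The only cosmetic difference is that the paper computes via Popov's formula \eqref{C_{2} of C_{l}} in the $\omega$-coordinates to obtain the equivalent condition $a_2=a_1+1$ (which is your $c_1+c_3+1=2c_2$ after the change of basis $c_1=a_1+a_2+a_3,\ c_2=a_2+a_3,\ c_3=a_3$).
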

	
	\begin{proof}
		Let $\nu = a_{1}\omega_{1} + a_{2}\omega_{2} + a_{3}\omega_{3} \in \mathcal{P}_{+}$.
		By \eqref{C_{2} of C_{l}} we have that $\chi_{\nu + \mu_{2}}(C) = \chi_{\nu + (\mu_{1} - \mu_{2} + \mu_{3})}(C)$ if and only if $a_{2} = a_{1} + 1$. 
		In other words, if $a_{i}, i = 1, 2, 3$ satisfy $a_{2} = a_{1} + 1$ and are sufficiently large such that $\nu + \mu_{2}, \nu + (\mu_{1} - \mu_{2} + \mu_{3}) \in \mathcal{P}_{+}$, irreducible $C_{3}$-modules $V_{\nu + \mu_{2}}$ and $V_{\nu + (\mu_{1} - \mu_{2} + \mu_{3})}$ occurs in the decomposition of  $V_{\omega_{3}} \otimes V_{\nu}$ with multiplicity $1$ by Corollary \ref{mult_C_{3}_omega3} and have the same $M_{\omega_{3}, \nu}(C)$-eigenvalue.
	\end{proof}

	\begin{prop}
		For the multiplicity free $C_{l}$-module $V_{\lambda}$, the equation 
		 $2 \text{ht}(\lambda) + 1 = \dim V_{\lambda}$ holds  if and only if $\lambda = \omega_{1}$.
	\end{prop}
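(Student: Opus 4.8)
The plan is to reduce the statement to a finite check against the classification of multiplicity free modules recalled in Section~2: for $l \geq 3$ the only multiplicity free $C_l$-modules are $V_{\omega_1}$ (for every such $l$) and, in the single extra case $l=3$, the module $V_{\omega_3}$. Hence it suffices to compute $\dim V_\lambda$ and $\text{ht}(\lambda)$ for $\lambda = \omega_1$, and for $\lambda=\omega_3$ when $l=3$, and then compare the two sides of the equation $2\,\text{ht}(\lambda)+1 = \dim V_\lambda$.

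For the dimensions I would invoke Weyl's dimension formula exactly as in the $A_l$ and $B_l$ cases above: $\dim V_{\omega_1} = 2l$ (this is the defining representation of $\mathfrak{sp}_{2l}$), while $\dim V_{\omega_3} = 14$ on $C_3$ (equivalently $\Lambda^{3}(\mathbb{C}^{6}) \cong V_{\omega_3}\oplus V_{\omega_1}$). For the heights I would use the inverse Cartan matrix of $C_l$ with $\alpha_l$ the long simple root, which gives $\omega_1 = \alpha_1 + \alpha_2 + \cdots + \alpha_{l-1} + \tfrac{1}{2}\alpha_l$; thus $\text{ht}(\omega_1) = l - \tfrac{1}{2}$ and $2\,\text{ht}(\omega_1)+1 = 2l = \dim V_{\omega_1}$, so the equation holds for $\lambda = \omega_1$ and every $l \geq 3$. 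On $C_3$ it gives $\omega_3 = \alpha_1 + 2\alpha_2 + \tfrac{3}{2}\alpha_3$, so $\text{ht}(\omega_3) = \tfrac{9}{2}$ and $2\,\text{ht}(\omega_3)+1 = 10 \neq 14 = \dim V_{\omega_3}$, so the equation fails. Since $\omega_1$ and (on $C_3$) $\omega_3$ exhaust the multiplicity free $C_l$-weights, this proves both directions at once.

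There is no genuine obstacle here: the computation is entirely routine and parallels the $B_l$ subsection. The only point requiring a little care is the root normalization --- because $\alpha_l$ is long, both $\omega_1$ and $\omega_3$ carry a half-integral coefficient on $\alpha_l$, so that $\text{ht}(\lambda)$ is a half-integer although $2\,\text{ht}(\lambda)+1$ stays an integer; one must not mis-scale $h^{0}$ or the height in the process. As a consistency check one may instead argue through Lemma~4.2: $\omega_1(h^{0}) = 2l-1$ generates an irreducible $(2l)$-dimensional $\mathfrak{a}^{0}$-submodule of $V_{\omega_1}$, which therefore exhausts it, whereas $\omega_3(h^{0}) = 9$ generates only a $10$-dimensional $\mathfrak{a}^{0}$-submodule of the $14$-dimensional $V_{\omega_3}$.
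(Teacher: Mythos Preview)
Your proposal is correct and follows essentially the same approach as the paper: both reduce to the classification of multiplicity free $C_l$-modules (only $\omega_1$ and, for $l=3$, $\omega_3$), compute the dimensions via Weyl's formula and the heights via the inverse Cartan matrix, and then verify $2\,\text{ht}(\omega_1)+1 = 2l = \dim V_{\omega_1}$ while $2\,\text{ht}(\omega_3)+1 = 10 < 14 = \dim V_{\omega_3}$. Your added remarks (the $\Lambda^3(\mathbb{C}^6)$ decomposition and the consistency check via Lemma~4.2) are correct but not present in the paper's proof.
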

	
	\begin{proof}For the irreducible $C_{l}$-modules $V_{\omega_{1}}$ and $C_{3}$-module $V_{\omega_{3}}$, we have $\dim V_{\omega_{1}} = 2l$ and $\dim V_{\omega_{3}} = 14$ by Weyl's dimension formula. Using the inverse of the Cartan matrix of $C_{l}$, we get $$\omega_{1} = \alpha_{1} + \dots + \alpha_{l-1} + \frac{1}{2} \alpha_{l},$$
		$$\omega_{l} = \alpha_{1} + \dots + (l-1)\alpha_{l-1} + \frac{l}{2} \alpha_{l}.$$ 
		Then, $\text{ht}(\omega_{1}) = l - \frac{1}{2}$ for $C_{l}$-modules $V_{\omega_{1}}$ and $\text{ht}(\omega_{3}) = \frac{9}{2}$ for $C_{3}$-module $V_{\omega_{3}}$. Furthermore, for  $C_{l}$-modules $V_{\omega_{1}}$, we have $2\text{ht}(\omega_{1}) + 1 = 2l = \text{dim} V_{\omega_{1}}$. And, for $C_{3}$-module $V_{\omega_{3}}$, we have $2\text{ht}(\omega_{3}) + 1 = 10 < 14 = \dim V_{\omega_{3}}$. 
		The proof is completed.
	\end{proof}

	\subsection{$D_{l} (l \geq 4)$}
	
	Denote weight 
	$$\mu_{i}(\text{diag}(a_{1}, \dots, a_{l}, -a_{1}, \dots, -a_{l}))= a_{i}, \ i = 1, \dots, l.$$
	We know that $$\Pi(V_{\omega_{1}}) = \{ \mu_{1}, \dots, \mu_{l}, -\mu_{1}, \dots, -\mu_{l} \}.$$ 
	It is well known that the irreducible $D_{l}$-modules $V_{\omega_{l-1}}$ and $V_{\omega_{l}}$ are spin modules with dimension $2^{l-1}$. 
	If $l$ is even,  the weights of $V_{\omega_{l}}$ are of the form $\frac{1}{2} \sum_{i = 1}^{l} \varepsilon_{i} \mu_{i}$ with $\varepsilon_{i} \in \{ 1, -1 \} \text{ for } i = 1, \dots, l$, which have an even number of $\varepsilon_{i}$ negative and those of $V_{\omega_{l-1}}$ are of the same form which have an odd number negative. 
	If $l$ is odd, we have the reverse situation. 
	Besides, $V_{\omega_{1}}, V_{\omega_{l-1}}$ and $V_{\omega_{l}}$ are minuscule $D_{l}$-modules.
	
	Write $\nu = a_{1}\omega_{1} + \dots + a_{l}\omega_{l} \in \mathcal{P}_{+}$. Denote
	$$
	\tilde{f}_{i} = a_{i} + a_{i+1} + \dots + a_{l-2} + \frac{1}{2}(a_{l-1} + a_{l}), \quad i = 1,2, \dots, l-2, $$
	$$
	\tilde{f}_{l-1} = \frac{1}{2}(a_{l-1} + a_{l}), \quad
	\tilde{f}_{l} = \varepsilon_{l, 2\mathbb{Z}}(\frac{1}{2} a_{l-1} - \frac{1}{2} a_{l}),
	$$
	where 
	\begin{equation*}
		\varepsilon_{l, 2\mathbb{Z}} = 
		\begin{cases}
			1 &\text{ if } l \notin 2\mathbb{Z} \\
			-1 &\text{ if } l \in 2\mathbb{Z}
		\end{cases}.
	\end{equation*}

	\begin{prop}
		Assume  $\nu =\sum\limits_{i=1}^{l} a_{i}\omega_{i} \in \mathcal{P}_{+}$ satisfies $a_{l-1} = a_{l}$ and $a_{i}, i = 1, \dots, l$ are sufficiently large so that $\nu + \mu_{l}$ and $\nu - \mu_{l} \in \mathcal{P}_{+}$. Then irreducible $D_{l}$-components $V_{\nu + \mu_{l}}$ and $V_{\nu - \mu_{l}}$ of $V_{\omega_{1}} \otimes V_{\nu}$ have the same $M_{\omega_{1}, \nu}(C)$-eigenvalues.
	\end{prop}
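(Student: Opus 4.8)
The plan is to reduce the assertion to an equality of infinitesimal characters and settle that by a short inner-product computation. Since $V_{\omega_{1}}$ is a minuscule $D_{l}$-module, the explicit decomposition for minuscule modules recalled above (due to Kass) gives $$V_{\omega_{1}} \otimes V_{\nu} = \bigoplus_{\substack{\mu_{i} \in \Pi(V_{\omega_{1}}) \\ \nu + \mu_{i} \in \mathcal{P}_{+}}} V_{\nu + \mu_{i}},$$ so under the hypothesis that the $a_{i}$ are large enough that $\nu + \mu_{l}, \nu - \mu_{l} \in \mathcal{P}_{+}$, both $V_{\nu + \mu_{l}}$ and $V_{\nu - \mu_{l}}$ occur, each with multiplicity one. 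By Gould's formula recalled above, the eigenvalue of the semisimple operator $M_{\omega_{1}, \nu}(C)$ on the summand $V_{\nu + \mu_{i}}$ equals $f_{C, i}(\nu) = \frac{1}{2}\bigl(\chi_{\nu + \mu_{i}}(C) - \chi_{\omega_{1}}(C) - \chi_{\nu}(C)\bigr)$. Hence the $M_{\omega_{1}, \nu}(C)$-eigenvalues on the distinct summands $V_{\nu + \mu_{l}}$ and $V_{\nu - \mu_{l}}$ coincide if and only if $\chi_{\nu + \mu_{l}}(C) = \chi_{\nu - \mu_{l}}(C)$.

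It remains to verify this equality of infinitesimal characters. Using $\chi_{\Lambda}(C) = (\Lambda, \Lambda + 2\delta)$ and expanding, $$\chi_{\nu + \mu_{l}}(C) - \chi_{\nu - \mu_{l}}(C) = (\nu + \mu_{l}, \nu + \mu_{l} + 2\delta) - (\nu - \mu_{l}, \nu - \mu_{l} + 2\delta) = 4(\nu + \delta, \mu_{l}).$$ In the standard realization the positive roots of $D_{l}$ are $\mu_{i} \pm \mu_{j}$ with $i < j$, the $\mu_{i}$ are pairwise orthogonal of equal length, and $\delta = \sum_{i = 1}^{l}(l - i)\mu_{i}$, so $(\delta, \mu_{l}) = 0$. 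Expressing the fundamental weights, namely $\omega_{j} = \mu_{1} + \dots + \mu_{j}$ for $j \leq l-2$, $\omega_{l-1} = \frac{1}{2}(\mu_{1} + \dots + \mu_{l-1} - \mu_{l})$ and $\omega_{l} = \frac{1}{2}(\mu_{1} + \dots + \mu_{l-1} + \mu_{l})$, one finds $(\nu, \mu_{l}) = \frac{1}{2}(a_{l} - a_{l-1})(\mu_{l}, \mu_{l})$. Therefore $(\nu + \delta, \mu_{l}) = 0$ exactly when $a_{l-1} = a_{l}$, which is the hypothesis; hence $\chi_{\nu + \mu_{l}}(C) = \chi_{\nu - \mu_{l}}(C)$ and the two summands share the same $M_{\omega_{1}, \nu}(C)$-eigenvalue.

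This argument presents no real obstacle: the only points needing care are invoking the minuscule decomposition so that $V_{\nu + \mu_{l}}$ and $V_{\nu - \mu_{l}}$ genuinely appear in $V_{\omega_{1}} \otimes V_{\nu}$ with multiplicity one, and the $D_{l}$ coordinate bookkeeping, in particular that it is the combination $a_{l} - a_{l-1}$, coming from the two spin fundamental weights $\omega_{l-1}$ and $\omega_{l}$, that controls the $l$-th coordinate of $\nu$. Equivalently, one may run the same computation through the explicit power-sum formula for $\chi(C)$ on $D_{l}$ given in the Appendix, exactly as in the $B_{l}$ and $C_{l}$ cases treated above.
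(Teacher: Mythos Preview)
Your argument is correct. Both you and the paper reduce to showing $\chi_{\nu+\mu_l}(C)=\chi_{\nu-\mu_l}(C)$, but the computations diverge from there. The paper invokes Popov's power-sum formula \eqref{C_{2} of D_{l}} from the Appendix and, because of the sign convention $\varepsilon_{l,2\mathbb Z}$ in the definition of $\tilde f_l$, must split into the cases $l$ even and $l$ odd before arriving at $\pm 8\tilde f_l = \pm 4(a_l-a_{l-1})$. You instead use the intrinsic formula $\chi_\Lambda(C)=(\Lambda,\Lambda+2\delta)$, obtain $\chi_{\nu+\mu_l}(C)-\chi_{\nu-\mu_l}(C)=4(\nu+\delta,\mu_l)$, and read off the answer from $(\delta,\mu_l)=0$ and the $\mu_l$-coefficients of $\omega_{l-1},\omega_l$; this avoids the parity case split entirely and is the cleaner route. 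You also make explicit, via the minuscule decomposition of Kass, that $V_{\nu\pm\mu_l}$ genuinely occur with multiplicity one in $V_{\omega_1}\otimes V_\nu$, a point the paper's proof leaves implicit. Your closing remark that one could equally well run the computation through the Appendix formula is exactly the route the paper takes.
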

	
	\begin{proof}
		If $l$ is even, it follows from \eqref{C_{2} of D_{l}} that
		$$
		\chi_{\nu + \mu_{l}}(C) - \chi_{\nu - \mu_{l}}(C) $$
		$$= [(\tilde{f}_{l} + 1 - l + 2l - 1)^{2} - (\tilde{f}_{l} - l + 2l - 1)^{2}] + [(\tilde{f}_{l} + 1 - l + 1)^{2} - (\tilde{f}_{l} - l + 1)^{2}] $$
		$$
		\quad - [(\tilde{f}_{l} - 1 - l + 2l - 1)^{2} - (\tilde{f}_{l} - l + 2l - 1)^{2}] - [(\tilde{f}_{l} - 1 - l + 1)^{2} - (\tilde{f}_{l} - l + 1)^{2}] $$
		$$
		= 8\tilde{f}_{l} = 8(-\frac{1}{2} a_{l-1} + \frac{1}{2} a_{l}) = 0.
		$$
		
		\medskip
		
		If $l$ is odd, similarly we get
		$$
		\chi_{\nu + \mu_{l}}(C) - \chi_{\nu - \mu_{l}}(C) = -8\tilde{f}_{l} = -8(\frac{1}{2} a_{l-1} - \frac{1}{2} a_{l}) = 0.
		$$
		These equations imply the proposition.    
	\end{proof}

	\begin{prop}
		For $l \geq 4$, there exists some weight $\nu \in \mathcal{P}_{+}$ so that $V_{\omega_{l-1}} \otimes V_{\nu}$  (resp. $V_{\omega_{l}} \otimes V_{\nu}$) has two irreducible $D_{l}$-components with the same $M_{\omega_{l-1}, \nu}(C)$-eigenvalues (resp. $M_{\omega_{l}, \nu}(C)$-eigenvalues).
	\end{prop}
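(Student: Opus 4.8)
The plan is to reduce the statement to the existence of a translation $\nu$ that forces a coincidence of Casimir eigenvalues, and then to solve an explicit linear equation in the coordinates of $\nu$. Since $\omega_{l-1}$ and $\omega_{l}$ are minuscule, the minuscule decomposition of Kass~[K] (the Proposition recalled above) gives, for every $\nu\in\mathcal{P}_{+}$,
$$V_{\omega_{l}}\otimes V_{\nu}=\bigoplus_{\substack{\varepsilon\in\Pi(V_{\omega_{l}})\\ \nu+\varepsilon\in\mathcal{P}_{+}}}V_{\nu+\varepsilon},$$
each summand of multiplicity one; and by the formula for the eigenvalues of $M_{\omega_{l},\nu}(C)$ recalled just before Theorem~\ref{a = b}, the eigenvalue of the semisimple operator $M_{\omega_{l},\nu}(C)$ on $V_{\nu+\varepsilon}$ is $\tfrac12\bigl(\chi_{\nu+\varepsilon}(C)-\chi_{\omega_{l}}(C)-\chi_{\nu}(C)\bigr)$. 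Thus it is enough to produce $\nu\in\mathcal{P}_{+}$ and two \emph{distinct} weights $\varepsilon,\eta\in\Pi(V_{\omega_{l}})$ with $\nu+\varepsilon,\nu+\eta\in\mathcal{P}_{+}$ and $\chi_{\nu+\varepsilon}(C)=\chi_{\nu+\eta}(C)$: the corresponding distinct summands $V_{\nu+\varepsilon}$ and $V_{\nu+\eta}$ of $V_{\omega_{l}}\otimes V_{\nu}$ then carry the same $M_{\omega_{l},\nu}(C)$-eigenvalue. The case of $V_{\omega_{l-1}}$ will be deduced from this at the end.

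First I would translate the condition on Casimir eigenvalues into coordinates. Write $\varepsilon=\tfrac12\sum_{i=1}^{l}\varepsilon_{i}\mu_{i}$ and $\eta=\tfrac12\sum_{i=1}^{l}\eta_{i}\mu_{i}$ with $\varepsilon_{i},\eta_{i}\in\{1,-1\}$, let $n_{i}$ denote the $\mu_{i}$-coordinates of $\nu$ (so $n_{i}=\tilde{f}_{i}$ for $i\le l-1$, while $n_{l}=-\varepsilon_{l,2\mathbb{Z}}\tilde{f}_{l}$ after unwinding the twist in the definition of $\tilde{f}_{l}$), and recall that $\delta$ has $\mu_{i}$-coordinate $l-i$. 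From $\chi_{\mu}(C)=(\mu,\mu+2\delta)$ one gets $\chi_{\nu+\varepsilon}(C)-\chi_{\nu+\eta}(C)=(\varepsilon-\eta,\ 2\nu+\varepsilon+\eta+2\delta)$, so, expanding in the orthogonal coordinates $\mu_{i}$ and using $\varepsilon_{i}=-\eta_{i}$ at an index where $\varepsilon_{i}\ne\eta_{i}$, the equality $\chi_{\nu+\varepsilon}(C)=\chi_{\nu+\eta}(C)$ becomes
$$\sum_{\substack{1\le i\le l\\ \varepsilon_{i}\ne\eta_{i}}}\varepsilon_{i}\,(n_{i}+l-i)=0.$$
This is the same computation that appears in the proof of the analogous statement for $V_{\omega_{1}}$ above. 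Since $\varepsilon$ and $\eta$ lie in the \emph{same} spin module, the set $\{i:\varepsilon_{i}\ne\eta_{i}\}$ has even cardinality, and a short case-check (using $n_{i}\ge0$ for $i\le l-1$ and $|n_{l}|\le n_{l-1}$) shows that a two-element such set can never satisfy the displayed equation; hence four differing coordinates are needed, and this is exactly where the hypothesis $l\ge4$ enters.

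Next I would make the choice explicit: let $\varepsilon$ and $\eta$ differ precisely in the coordinates $1,2,3,4$, with $\varepsilon_{1}=\varepsilon_{4}=1$ and $\varepsilon_{2}=\varepsilon_{3}=-1$, the remaining coordinates $\varepsilon_{i}=\eta_{i}$ ($i\ge5$) being chosen so that the number of negative signs of $\varepsilon$ has the parity required for $\varepsilon,\eta$ to lie in $\Pi(V_{\omega_{l}})$ --- possible because for $l$ odd (necessarily $l\ge5$) there is a free coordinate to toggle, and automatic for $l=4$. The sign pattern $(1,-1,-1,1)$ on the indices $1,2,3,4$ annihilates every affine-linear function of the index, so the terms $l-i$ cancel; and using $n_{1}-n_{2}=\tilde{f}_{1}-\tilde{f}_{2}=a_{1}$ and $n_{3}-n_{4}=\tilde{f}_{3}-\tilde{f}_{4}=a_{3}$, valid for every $l\ge4$ (for $l=4$ one checks $n_{4}=\tilde{f}_{4}$ after unwinding the twist), the displayed equation reduces to $a_{1}=a_{3}$. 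Hence, choosing any $\nu=\sum_{i}a_{i}\omega_{i}\in\mathcal{P}_{+}$ with $a_{1}=a_{3}$ and all $a_{i}$ large enough that $\nu+\varepsilon$ and $\nu+\eta$ lie in $\mathcal{P}_{+}$ (which holds for all $a_{i}\gg0$, the weights of a minuscule module being small), we get $\chi_{\nu+\varepsilon}(C)=\chi_{\nu+\eta}(C)$, and $V_{\nu+\varepsilon},V_{\nu+\eta}$ are two distinct irreducible components of $V_{\omega_{l}}\otimes V_{\nu}$ with a common $M_{\omega_{l},\nu}(C)$-eigenvalue. For $V_{\omega_{l-1}}$ I would then invoke the diagram automorphism $\sigma$ of $D_{l}$ interchanging the two spin nodes $\omega_{l-1}$ and $\omega_{l}$ and fixing the remaining fundamental weights: $\sigma$ fixes the Casimir element $C$ (the Killing form being automorphism-invariant) and carries a tensor-product decomposition to a tensor-product decomposition with the same multiplicities, so applying $\sigma$ to the pair just constructed yields two distinct components of $V_{\omega_{l-1}}\otimes V_{\sigma\nu}$ with equal $M_{\omega_{l-1},\sigma\nu}(C)$-eigenvalue.

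The main obstacle will be the bookkeeping in the middle step: pinning down the precise form of $\chi_{\mu}(C)$ in the coordinates $\tilde{f}_{i}$ --- equivalently, of \eqref{C_{2} of D_{l}} --- and in particular handling correctly the sign twist $\varepsilon_{l,2\mathbb{Z}}$ in the last coordinate together with the parity of negative signs that distinguishes $\Pi(V_{\omega_{l}})$ from $\Pi(V_{\omega_{l-1}})$; once the linear equation characterizing $\chi_{\nu+\varepsilon}(C)=\chi_{\nu+\eta}(C)$ is established, the construction of $\nu$ is immediate. (In rank four one may bypass the automorphism step and treat $V_{\omega_{3}}$ of $D_{4}$ directly by the same method, using the sign pattern $\varepsilon_{1}=1$, $\varepsilon_{2}=\varepsilon_{3}=\varepsilon_{4}=-1$ at coordinates $1,2,3,4$ and a suitable linear constraint on the $a_{i}$.)
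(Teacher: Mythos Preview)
Your argument is correct and the core idea matches the paper's: pick two weights $\varepsilon,\eta$ of the same spin module differing in exactly four coordinates, compute $\chi_{\nu+\varepsilon}(C)-\chi_{\nu+\eta}(C)$ as a linear form in the coordinates of $\nu$, and then choose $\nu$ large and satisfying the single linear constraint that makes it vanish. The paper carries this out with the Popov power-sum expression \eqref{C_{2} of D_{l}} and places the four differing coordinates at positions $l-3,l-2,l-1,l$, giving the conditions $a_{l-3}=a_{l}$ or $a_{l-3}=a_{l-1}$ depending on parity; you instead use the intrinsic formula $\chi_{\mu}(C)=(\mu,\mu+2\delta)$ and put the four differing coordinates at positions $1,2,3,4$, which yields the single condition $a_{1}=a_{3}$ and largely sidesteps the sign twist in the last coordinate $\tilde f_{l}$. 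The two computations are equivalent.

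One point where your route is genuinely different: the paper treats $V_{\omega_{l-1}}$ and $V_{\omega_{l}}$ by separate explicit constructions in each parity of $l$, whereas you handle $V_{\omega_{l}}$ directly and then deduce $V_{\omega_{l-1}}$ from the diagram automorphism swapping the two spin nodes. This is cleaner and avoids repeating the calculation; it costs only the (standard) remark that the outer automorphism preserves the Killing form and hence the Casimir eigenvalues. Your parenthetical ``short case-check'' that two differing coordinates cannot work is correct but does rely on the inequalities $n_{1}\ge\cdots\ge n_{l-1}\ge|n_{l}|$; if you keep that remark, spelling this out in one line would make the argument self-contained.
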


	\begin{proof}
		Assume $l (l \geq 4)$ is even.
		Denote $\varepsilon = \frac{1}{2} \sum_{i = 1}^{l} \varepsilon_{i} \mu_{i}$ and $\eta = \frac{1}{2} \sum_{i = 1}^{l} \eta_{i} \mu_{i}$ with undetermined coefficients $\varepsilon_{i}, \eta_{i} \in \{ 1, -1 \}$ for $i = 1, \dots, l$. 
		Take $\nu = a_{1}\omega_{1} + \dots + a_{l}\omega_{l} \in \mathcal{P}_{+}$ so that $\nu + \varepsilon, \nu + \eta \in \mathcal{P}_{+}$. 
		Using \eqref{C_{2} of D_{l}}, we get
		$$\chi_{\nu + \varepsilon}(C) - \chi_{\nu + \eta}(C) $$
		$$
		= \sum_{i = 1}^{l} [(\tilde{f}_{i} + \frac{1}{2}\varepsilon_{i} - i + 2l - 1)^{2} + (\tilde{f}_{i} + \frac{1}{2}\varepsilon_{i} - i + 1)^{2}] $$
		$$
		\quad\quad - \sum_{i = 1}^{l} [(\tilde{f}_{i} + \frac{1}{2}\eta_{i} - i + 2l - 1)^{2} + (\tilde{f}_{i} + \frac{1}{2}\eta_{i} - i + 1)^{2}] $$
		$$
		= \sum_{i = 1}^{l} 2(\varepsilon_{i} - \eta_{i})(\tilde{f}_{i} - i + l).$$ 
		
		\medskip
		
		Set 
		\begin{equation} \label{varepsilon_eta_odd}
			\begin{split}
				&\varepsilon_{i} = \eta_{i} = 1 \quad \text{if} \quad i = 1, \dots, l-4, \\
				&\varepsilon_{l-3} = 1, \quad \varepsilon_{l-2} = \varepsilon_{l-1} = \varepsilon_{l} = -1, \\
				&\eta_{l-3} = -1, \quad \eta_{l-2} = \eta_{l-1} = \eta_{l} = 1.
			\end{split}
		\end{equation}
		It is obvious that $\varepsilon, \eta \in \Pi(V_{\omega_{l-1}})$. 
		Therefore, $\chi_{\nu + \varepsilon}(C) = \chi_{\nu + \eta}(C)$ if and only if $a_{l-3} = a_{l}$. 
		In other words, if $a_{i}, i = 1, \dots, l$ satisfy $a_{l-3} = a_{l}$ and are sufficiently large such that $\nu + \varepsilon, \nu + \eta \in \mathcal{P}_{+}$, where $\varepsilon, \eta$ are defined in \eqref{varepsilon_eta_odd}, the irreducible $D_{l}$-modules $V_{\nu + \varepsilon}$ and $V_{\nu + \eta}$ occurring in $V_{\omega_{l-1}} \otimes V_{\nu}$ have the same $M_{\omega_{l-1}, \nu}(C)$-eigenvalue.
		
		\medskip
		
		Set 
		\begin{equation} \label{varepsilon_eta_even}
			\begin{split}
				\varepsilon_{i} = \eta_{i} = 1 \quad \text{if} \quad i = 1, \dots, l-4, \\
				\varepsilon_{l-3} = \varepsilon_{l} = 1, \quad \varepsilon_{l-2} = \varepsilon_{l-1} = -1, \\
				\eta_{l-3} = \eta_{l} = -1, \quad \eta_{l-2} = \eta_{l-1} = 1.
			\end{split}
		\end{equation}
		It is obvious that $\varepsilon, \eta \in \Pi(V_{\omega_{l}})$. 
		Then $\chi_{\nu + \varepsilon}(C) = \chi_{\nu + \eta}(C)$ if and only if $a_{l-3} = a_{l-1}$. 
		In other words, if $a_{i}, i = 1, \dots, l$ satisfy $a_{l-3} = a_{l-1}$ and are sufficiently large such that $\nu + \varepsilon, \nu + \eta \in \mathcal{P}_{+}$, where $\varepsilon, \eta$ are defined in \eqref{varepsilon_eta_even}, then irreducible $D_{l}$-modules $V_{\nu + \varepsilon}$ and $V_{\nu + \eta}$ occurring in $V_{\omega_{l}} \otimes V_{\nu}$ have the same $M_{\omega_{l}, \nu}(C)$-eigenvalue.

		\medskip
		
		Assume $l (l \geq 4)$ is odd.
		Similarly, we denote $\varepsilon = \frac{1}{2} \sum_{i = 1}^{l} \varepsilon_{i} \mu_{i}$ and $\eta = \frac{1}{2} \sum_{i = 1}^{l} \eta_{i} \mu_{i}$ with undetermined coefficients $\varepsilon_{i}, \eta_{i} \in \{ 1, -1 \}$ for $i = 1, \dots, l$ and take $\nu = a_{1}\omega_{1} + \dots + a_{l}\omega_{l} \in \mathcal{P}_{+}$ so that $\nu + \varepsilon, \nu + \eta \in \mathcal{P}_{+}$. 
		Using \eqref{C_{2} of D_{l}} we get
		$$\chi_{\nu + \varepsilon}(C) - \chi_{\nu + \eta}(C) $$
		$$
		= \sum_{i = 1}^{l-1} [(\tilde{f}_{i} + \frac{1}{2}\varepsilon_{i} - i + 2l - 1)^{2} + (\tilde{f}_{i} + \frac{1}{2}\varepsilon_{i} - i + 1)^{2}] 
		+ [(\tilde{f}_{l} - \frac{1}{2}\varepsilon_{l} + l - 1)^{2} + (\tilde{f}_{l} - \frac{1}{2}\varepsilon_{l} - l + 1)^{2}] $$
		$$
		- \sum_{i = 1}^{l-1} [(\tilde{f}_{i} + \frac{1}{2}\eta_{i} - i + 2l - 1)^{2} + (\tilde{f}_{i} + \frac{1}{2}\eta_{i} - i + 1)^{2}] 
		- [(\tilde{f}_{l} - \frac{1}{2}\eta_{l} + l - 1)^{2} + (\tilde{f}_{l} - \frac{1}{2}\eta_{l} - l + 1)^{2}]$$
		$$
		= \sum_{i = 1}^{l-1} 2(\varepsilon_{i} - \eta_{i})(\tilde{f}_{i} - i + l) - 2(\varepsilon_{l} - \eta_{l})\tilde{f}_{l}.$$ 
		It is obvious that $\varepsilon, \eta$ defined in \eqref{varepsilon_eta_odd} are in $\Pi(V_{\omega_{l}})$ and $\varepsilon, \eta$ defined in \eqref{varepsilon_eta_even} are in $\Pi(V_{\omega_{l-1}})$.
		Through similar discussions, we have irreducible $D_{l}$-modules $V_{\nu + \varepsilon}$ and $V_{\nu + \eta}$ with $\varepsilon, \eta$ defined in \eqref{varepsilon_eta_odd} occurring in  $V_{\omega_{l}} \otimes V_{\nu}$ (resp. those with $\varepsilon, \eta$ defined in \eqref{varepsilon_eta_even} occurring in $V_{\omega_{l-1}} \otimes V_{\nu}$) have the same $M_{\omega_{l}, \nu}(C)$-eigenvalues (resp. $M_{\omega_{l-1}, \nu}(C)$-eigenvalues).		
	\end{proof}

	\begin{prop}
		For every  multiplicity free $D_{l}$-module $V_{\lambda}$, we have $2 \text{ht}(\lambda) + 1 \neq  \dim V_{\lambda}$.
	\end{prop}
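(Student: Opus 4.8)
The plan is to reduce to the list in Theorem 2.2: for $D_l$ with $l\ge 4$ the multiplicity free modules are exactly the three minuscule ones $V_{\omega_1}$, $V_{\omega_{l-1}}$, $V_{\omega_l}$, so it suffices to compare $\dim V_\lambda$ against $2\,\text{ht}(\lambda)+1$ in each of these cases. First I would compute the dimensions by Weyl's formula, $\dim V_{\omega_1}=2l$ and $\dim V_{\omega_{l-1}}=\dim V_{\omega_l}=2^{l-1}$, and the heights from the inverse Cartan matrix of $D_l$. Expanding in simple roots, $\omega_1=\alpha_1+\cdots+\alpha_{l-2}+\tfrac12(\alpha_{l-1}+\alpha_l)$, so $\text{ht}(\omega_1)=l-1$; and $\omega_l=\sum_{j=1}^{l-2}\tfrac{j}{2}\alpha_j+\tfrac{l-2}{4}\alpha_{l-1}+\tfrac{l}{4}\alpha_l$, with the mirror expression for $\omega_{l-1}$, so $\text{ht}(\omega_{l-1})=\text{ht}(\omega_l)=\tfrac{l(l-1)}{4}$.

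For the vector representation the inequality is immediate: $2\,\text{ht}(\omega_1)+1=2l-1\neq 2l=\dim V_{\omega_1}$. For the two half-spin modules it remains to check $\tfrac{l(l-1)}{2}+1\neq 2^{l-1}$ for all $l\ge 4$, and in fact the left-hand side is strictly smaller. Following the device already used in the $B_l$ and $C_l$ arguments above, I would set $F_D(x)=\tfrac12 x^2-\tfrac12 x+1-2^{x-1}$, observe $F_D''(x)=1-2^{x-1}(\ln 2)^2<0$ for $x\ge 3$ so that $F_D'$ is decreasing there, check $F_D'(4)=\tfrac72-8\ln 2<0$ to get that $F_D$ is decreasing on $[4,\infty)$, and finally evaluate $F_D(4)=-1<0$. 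This yields $2\,\text{ht}(\omega_{l-1})+1=2\,\text{ht}(\omega_l)+1=\tfrac{l(l-1)}{2}+1<2^{l-1}=\dim V_{\omega_{l-1}}=\dim V_{\omega_l}$ for every $l\ge 4$, completing all three cases.

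The main obstacle is the elementary polynomial-versus-exponential comparison for the spin modules; the dimension and height computations are routine bookkeeping with Weyl's formula and the $D_l$ Cartan matrix, and should be carried out (not merely quoted) so that the height $\text{ht}(\omega_l)=l(l-1)/4$ is visibly correct.
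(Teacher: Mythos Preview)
Your proposal is correct and follows essentially the same approach as the paper: both reduce to the three minuscule modules, compute the same dimensions and heights, dispose of $V_{\omega_1}$ by the one-line comparison $2l-1<2l$, and handle the half-spin modules via the function $F_D(x)=\tfrac12 x^2-\tfrac12 x+1-2^{x-1}$. The only cosmetic difference is that the paper observes $F_D(x)=F_B(x-1)$ and imports the monotonicity and sign from the $B_l$ analysis, whereas you carry out the calculus on $F_D$ directly; both routes are equally valid.
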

	
	\begin{proof}		
		By Weyl's dimension formula, we have $\dim V_{\omega_{1}} = 2l$ and $\dim V_{\omega_{l-1}}$ $= \dim V_{\omega_{l}} = 2^{l-1}$. 
		Using the inverse of the Cartan matrix of $D_{l}$, we know that 
		$$\omega_{1} = \alpha_{1} + \dots + \alpha_{l-2} + \frac{1}{2}(\alpha_{l-1} + \alpha_{l}),$$ $$\omega_{l-1} = \frac{1}{2}(\alpha_{1} + \dots + (l-2)\alpha_{l-2}) + \frac{l}{4}\alpha_{l-1} + \frac{l-2}{4}\alpha_{l},$$ 
		$$\omega_{l} = \frac{1}{2}(\alpha_{1} + \dots + (l-2)\alpha_{l-2}) + \frac{l-2}{4}\alpha_{l-1} + \frac{l}{4}\alpha_{l}.$$ 
		Then $\text{ht}(\omega_{1}) = l-1$ and $\text{ht}(\omega_{l-1}) = \text{ht}(\omega_{l}) = \frac{l(l-1)}{4}$.
		
		\medskip
		
		For $V_{\omega_{1}}$, it is obvious that $2\text{ht} (\omega_{1}) + 1 = 2l-1 < 2l = \dim V_{\omega_{1}}$.
		
		\medskip
		
		For $V_{\omega_{l-1}}$ and $V_{\omega_{l}}$, consider the polynomial function 
		$$F_{D}(x) = \frac{1}{2}x^{2} - \frac{1}{2}x + 1 - 2^{x-1}.$$ 
		It is easy to see that $F_{D}(x) = F_{B}(x-1)$, then it follows that $F_{D}(x)$ is monotonically decreasing if $x \geq 3$ and $F_{D}(l) \leq F_{D}(4) = F_{B}(3) < 0$. 
		Since $V_{\omega_{l-1}}$ and $V_{\omega_{l}}$ have the same dimension and $\text{ht}(\omega_{l-1}) = \text{ht}(\omega_{l})$, the equation $2\text{ht}(\lambda) + 1 = \dim V_{\lambda}$ does not hold.
	\end{proof}

	\subsection{$G_{2}$}
	
	It follows from Proposition \ref{smf_has_distinct_eigs} that the $M_{\omega_{1}, \nu}(C)$-eigenvalues of all the irreducible components in the decomposition $V_{\omega_{1}} \otimes V_{\nu}$ are distinct for any $\nu \in \mathcal{P}_{+}$. 
	The Cartan matrix yields 
	$$\omega_{1} = 2\alpha_{1} + \alpha_{2},$$
	$$\omega_{2} = 3\alpha_{1} + 2\alpha_{2}.$$ 
	Then $\text{ht}(\omega_{1}) = 3$. 
	One knows that $\Pi(V_{\omega_{1}}) = \{0, \pm \omega_{1}, \pm (-\omega_{1} + \omega_{2}), \pm (2\omega_{1} - \omega_{2})\}$ and $V_{\omega_{1}}$ has dimension $7$. 
	Then we have $2 \text{ht}(\omega_{1}) + 1 = \dim V_{\omega_{1}}$.

	\subsection{$E_{6}$}
	
	We know that $V_{\omega_{1}}$ and $V_{\omega_{6}}$ are irreducible minuscule $E_{6}$-modules.  
	They are dual and have the same dimension $27$. 
	The $E_{6}$-module decomposition of $V_{\omega_{1}} \otimes V_{\nu}$ is presented in [KW].

	\begin{prop}
		There exists some weight $\nu \in \mathcal{P}_{+}$ such that $V_{\omega_{1}} \otimes V_{\nu}$ has two irreducible $E_{6}$-components with the same $M_{\omega_{1}, \nu}(C)$-eigenvalues.
	\end{prop}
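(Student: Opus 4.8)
The plan is to reduce the assertion to an elementary combinatorial statement about the weights of the minuscule $E_6$-module $V_{\omega_1}$ and then solve one linear equation. Since $\omega_1$ is minuscule, Kass's decomposition (recalled above) gives the multiplicity-one sum $V_{\omega_1}\otimes V_\nu=\bigoplus_{\lambda_i\in\Pi(V_{\omega_1}),\ \nu+\lambda_i\in\mathcal P_+}V_{\nu+\lambda_i}$, and the eigenvalue of $M_{\omega_1,\nu}(C)$ on the summand $V_{\nu+\lambda_i}$ is $f_{C,i}(\nu)=\frac12(\chi_{\nu+\lambda_i}(C)-\chi_{\omega_1}(C)-\chi_\nu(C))$, as recalled before Theorem \ref{a = b}. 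Hence it suffices to produce $\nu\in\mathcal P_+$ and two distinct weights $\lambda_i,\lambda_j\in\Pi(V_{\omega_1})$ with $\nu+\lambda_i,\nu+\lambda_j\in\mathcal P_+$ and $\chi_{\nu+\lambda_i}(C)=\chi_{\nu+\lambda_j}(C)$. Computing exactly as in the proof of Proposition \ref{smf_has_distinct_eigs}, one gets $\chi_{\nu+\lambda_i}(C)-\chi_{\nu+\lambda_j}(C)=(\lambda_i-\lambda_j,\,2\nu+\lambda_i+\lambda_j+2\delta)$; since $V_{\omega_1}$ is minuscule, the Weyl group is transitive on $\Pi(V_{\omega_1})$, so $(\lambda_i,\lambda_i)=(\lambda_j,\lambda_j)$ and the term $(\lambda_i-\lambda_j,\lambda_i+\lambda_j)$ vanishes, leaving $2(\nu+\delta,\lambda_i-\lambda_j)$. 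So the whole task is to arrange $(\nu+\delta,\lambda_i-\lambda_j)=0$.

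The key input is that $V_{\omega_1}$ for $E_6$ is multiplicity free by Howe's list but is absent from the list of strongly multiplicity free modules [LZ], hence $\Pi(V_{\omega_1})$ is not linearly ordered; fix incomparable weights $\lambda_i,\lambda_j$ and set $\beta=\lambda_i-\lambda_j$. Incomparability means that $\beta$, expanded in the simple roots, has both a strictly positive and a strictly negative coefficient, so $\beta$ lies in neither the cone $\sum_k\mathbb Z_{\geq 0}\alpha_k$ nor its negative. Because the dominant chamber is the dual cone of the positive root cone, an elementary convexity argument (separate a point of the dominant chamber with negative pairing against $\beta$ from one with positive pairing, then push into the interior with $\delta$) shows that the hyperplane $\beta^\perp$ meets the interior of the dominant chamber; since $\beta$ lies in the root lattice, $\beta^\perp$ then contains integral weights lying arbitrarily deep inside the dominant chamber. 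Choosing such a weight and writing it as $\nu+\delta$ with $\nu\in\mathcal P_+$, and taking it deep enough, $\nu+\lambda_i$ and $\nu+\lambda_j$ stay dominant (the fundamental-weight coordinates of the $27$ weights of $V_{\omega_1}$ are bounded), while by construction $(\nu+\delta,\lambda_i-\lambda_j)=0$. Thus $V_{\nu+\lambda_i}$ and $V_{\nu+\lambda_j}$ are two distinct summands of $V_{\omega_1}\otimes V_\nu$ with equal $M_{\omega_1,\nu}(C)$-eigenvalue.

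The main obstacle — and where the explicit $E_6$ root data (equivalently, the explicit decomposition of [KW]) really enters, in the same spirit as the $B_l$, $C_3$, and $D_l$ cases above — is to exhibit a concrete incomparable pair $\lambda_i,\lambda_j$ among the $27$ weights together with a concrete $\nu=\sum_k a_k\omega_k$ solving the single scalar equation $(\nu+\delta,\lambda_i-\lambda_j)=0$ with all $a_k$ large; once such data is written down, the verification is immediate, and the analogous statement for $V_{\omega_6}$ follows by duality since $V_{\omega_6}\cong V_{\omega_1}^{*}$.
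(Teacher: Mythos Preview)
Your reduction is correct and your abstract argument in the second paragraph is in fact a complete proof: once you know $V_{\omega_1}$ is multiplicity free but not strongly multiplicity free, incomparable weights $\lambda_i,\lambda_j$ exist, $\beta=\lambda_i-\lambda_j$ has simple-root coefficients of both signs, hence the linear functional $x\mapsto (x,\beta)$ takes both signs on the fundamental weights and therefore vanishes somewhere in the open dominant chamber; the lattice $\beta^\perp\cap P$ is then full rank in $\beta^\perp$, so it meets every translate of that open cone, giving $\nu\in\mathcal P_+$ as deep as you like with $(\nu+\delta,\beta)=0$. (A small quibble: the relevant reason the intersection $\beta^\perp\cap P$ is a full-rank lattice is simply that $(\cdot,\beta)$ is a rational linear form on $P$, not specifically that $\beta$ lies in the root lattice.) Given this, your third paragraph undersells what you have done: there is no remaining ``main obstacle''; the explicit data are not needed for the proof, only for an explicit example.

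The paper's proof takes the concrete route instead. It writes down two specific weights of $V_{\omega_1}$, namely $\lambda_1=-\omega_1+\omega_6$ and $\lambda_2=\omega_3-\omega_5$, computes $\chi_{\nu+\lambda_1}(C)-\chi_{\nu+\lambda_2}(C)$ directly, and finds it vanishes exactly when $a_1+a_3=a_5+a_6$; any $\nu$ with large coordinates satisfying this works. Your argument is more conceptual and in fact applies uniformly to every minuscule module that is multiplicity free but not strongly multiplicity free (so it would simultaneously handle the $D_l$ cases, $B_l$ spin for $l\geq 3$, the higher $A_l$ fundamentals, and $E_7$), at the cost of not producing the explicit linear condition on $\nu$ that the paper's computation yields.
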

	
	\begin{proof}
		Write the weight $\nu = a_{1}\omega_{1} + \dots + a_{6}\omega_{6}$. 
		One knows that $\lambda_{1} = -\omega_{1} + \omega_{6}$ and $\lambda_{2} = \omega_{3} - \omega_{5} \in \Pi(V_{\omega_{1}})$. 
		Define $F(\mu) = \chi_{\nu + \mu}(C) -\chi_{\nu}(C)$ for each $\mu \in \Pi(V_{\omega_{1}})$. 
		Then we have
		$$
		F(\lambda_{1}) = \frac{2}{3}(-2a_{1} - a_{3} + a_{5} + 2a_{6}) + \frac{4}{3}, $$
		$$
		F(\lambda_{2}) = \frac{2}{3}(a_{1} + 2a_{3} - 2a_{5} - a_{6}) + \frac{4}{3}.
		$$
		Hence $F(\lambda_{1}) = F(\lambda_{2})$ if and only if $a_{1} + a_{3} = a_{5} + a_{6}$. 
		In other words, if the positive integers $a_{i} (i = 1, \dots, 6) $
		are sufficiently large such that $\nu + \lambda_{1}, \nu + \lambda_{2} \in \mathcal{P}_{+}$ and they		
		satisfy the equation		
	  $a_{1} + a_{3} = a_{5} + a_{6}$, then the  irreducible $E_{6}$-modules $V_{\nu + \lambda_{1}}$ and $V_{\nu + \lambda_{2}}$ occurring in $V_{\omega_{1}} \otimes V_{\nu}$ have the same $M_{\omega_{1}, \nu}(C)$-eigenvalues.
	\end{proof}
	
	The similar conclusion of $V_{\omega_{6}} \otimes V_{\nu}$ can be proved by the  above  method  since $V_{\omega_{1}}$ is the dual module to $V_{\omega_{6}}$.

	\begin{prop}
		For the multiplicity free $E_{6}$-modules $V_{\omega_{1}}$ and $V_{\omega_{6}}$, we have  $2 \text{ht}(\lambda) + 1 \neq  \dim V_{\lambda}$.
	\end{prop}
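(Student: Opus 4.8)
\medskip
\noindent\textbf{Proof proposal.}
The plan is to reduce the statement to a single numerical comparison, exactly in the spirit of the case-by-case computations carried out above. Since $V_{\omega_{1}}$ and $V_{\omega_{6}}$ are the two $27$-dimensional minuscule $E_{6}$-modules and are dual to one another, it suffices to compute the heights $\text{ht}(\omega_{1})$ and $\text{ht}(\omega_{6})$ and to compare $2\,\text{ht}(\omega_{i}) + 1$ with $\dim V_{\omega_{i}} = 27$.

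First I would record $\dim V_{\omega_{1}} = \dim V_{\omega_{6}} = 27$ from Weyl's dimension formula (already noted in Section~5.6). Then, using the inverse of the Cartan matrix of $E_{6}$ in the Bourbaki labelling, one expands the two fundamental weights in the simple root basis:
\[
\omega_{1} = \frac{1}{3}\bigl(4\alpha_{1} + 3\alpha_{2} + 5\alpha_{3} + 6\alpha_{4} + 4\alpha_{5} + 2\alpha_{6}\bigr),
\]
\[
\omega_{6} = \frac{1}{3}\bigl(2\alpha_{1} + 3\alpha_{2} + 4\alpha_{3} + 6\alpha_{4} + 5\alpha_{5} + 4\alpha_{6}\bigr).
\]
Summing coefficients gives $\text{ht}(\omega_{1}) = \text{ht}(\omega_{6}) = \frac{1}{3}(4+3+5+6+4+2) = 8$; the equality $\text{ht}(\omega_{1}) = \text{ht}(\omega_{6})$ is in any case forced by $V_{\omega_{1}}^{*}\simeq V_{\omega_{6}}$, so only one expansion really needs to be done. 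As a consistency check, the Dynkin polynomial $D_{\omega_{1}}(q)$ has degree $2\,\text{ht}(\omega_{1})$, and it indeed has floor levels ranging up to $\text{ht}\bigl(\omega_{1}+\omega_{6}\bigr) = \text{ht}\bigl(2\alpha_{1} + 2\alpha_{2} + 3\alpha_{3} + 4\alpha_{4} + 3\alpha_{5} + 2\alpha_{6}\bigr) = 16$.

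It then follows that $2\,\text{ht}(\omega_{1}) + 1 = 17 < 27 = \dim V_{\omega_{1}}$, and the same for $\omega_{6}$, so $2\,\text{ht}(\lambda) + 1 \neq \dim V_{\lambda}$ for $\lambda = \omega_{1}, \omega_{6}$. I do not expect a genuine obstacle: the only delicate point is reading off the correct column of $(C_{E_{6}})^{-1}$, and duality removes the need to repeat the work for $V_{\omega_{6}}$. By Lemma~4.2 and Theorem~4.3 this also records that neither $27$-dimensional $E_{6}$-module is strongly multiplicity free, consistent with the classification recalled in Section~2.
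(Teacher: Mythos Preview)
Your proof is correct and essentially identical to the paper's: both expand $\omega_{1}$ and $\omega_{6}$ in the simple-root basis via the inverse Cartan matrix, read off $\text{ht}(\omega_{1}) = \text{ht}(\omega_{6}) = 8$, and compare $2\cdot 8 + 1 = 17$ with $\dim V_{\omega_{i}} = 27$. The duality remark and the Dynkin-polynomial consistency check are nice additions but not in the paper's version.
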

	
	\begin{proof}		
	The information about the Cartan matrix listed in [Hu] yields:
		$$\omega_{1} = \frac{1}{3}(4\alpha_{1} + 3\alpha_{2} + 5\alpha_{3} + 6\alpha_{4} + 4\alpha_{5} + 2\alpha_{6}),$$ 
		$$\omega_{6} = \frac{1}{3}(2\alpha_{1} + 3\alpha_{2} + 4\alpha_{3} + 6\alpha_{4} + 5\alpha_{5} + 4\alpha_{6}).$$ 
		Then $\text{ht}(\omega_{1}) = \text{ht}(\omega_{6}) = 8$. 
		Since $V_{\omega_{1}}$ and $V_{\omega_{6}}$ have the same dimension $27$, it follows that $2 \text{ht}(\lambda) + 1 < \dim V_{\lambda}$.
	\end{proof}

	\subsection{$E_{7}$}
	
	We know that $V_{\omega_{7}}$ is an irreducible minuscule $E_{7}$-module with dimension $56$.  
	The $E_{7}$-module decomposition of $V_{\omega_{7}} \otimes V_{\nu}$ is also presented in [KW].

	\begin{prop}
		There exists some weight $\nu \in \mathcal{P}_{+}$ such that $V_{\omega_{7}} \otimes V_{\nu}$ has two irreducible $E_{7}$-components with the same $M_{\omega_{7}, \nu}(C)$-eigenvalues.
	\end{prop}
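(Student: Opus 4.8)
The plan is to repeat, for the $56$--dimensional module, the argument already used above for $E_6$. Since $\omega_7$ is a minuscule weight, Kass's proposition applies, so for every $\nu\in\mathcal{P}_{+}$ one has the multiplicity--free decomposition
$$V_{\omega_7}\otimes V_\nu=\sum_{\substack{\mu\in\Pi(V_{\omega_7})\\ \nu+\mu\in\mathcal{P}_{+}}}V_{\nu+\mu},$$
and by Gould's description the eigenvalue of $M_{\omega_7,\nu}(C)$ on the summand $V_{\nu+\mu}$ is $f_{C,\mu}(\nu)=\tfrac12\bigl(\chi_{\nu+\mu}(C)-\chi_{\omega_7}(C)-\chi_\nu(C)\bigr)$. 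Hence two summands $V_{\nu+\lambda_1}$ and $V_{\nu+\lambda_2}$ carry the same $M_{\omega_7,\nu}(C)$--eigenvalue precisely when $\chi_{\nu+\lambda_1}(C)=\chi_{\nu+\lambda_2}(C)$, which, as in the proof of Proposition~\ref{smf_has_distinct_eigs}, is the single condition $\bigl(2\nu+\lambda_1+\lambda_2+2\delta,\ \lambda_1-\lambda_2\bigr)=0$.

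First I would fix two distinct weights $\lambda_1,\lambda_2\in\Pi(V_{\omega_7})$; concretely one may take $\lambda_1=\omega_5-\omega_6$ and $\lambda_2=\omega_3-\omega_2$, both of which are obtained from the highest weight $\omega_7$ by successively subtracting simple roots and hence lie in $\Pi(V_{\omega_7})$. Writing $\nu=\sum_{i=1}^{7}a_i\omega_i$, the displayed equality $\chi_{\nu+\lambda_1}(C)=\chi_{\nu+\lambda_2}(C)$ becomes an inhomogeneous \emph{linear} equation $\sum_{i=1}^{7}c_i a_i=c_0$ in the $a_i$, where $c_i=2(\omega_i,\lambda_1-\lambda_2)$ and $c_0=-(\lambda_1+\lambda_2+2\delta,\lambda_1-\lambda_2)$. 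Because $\lambda_1\neq\lambda_2$ and the $\omega_i$ span $\mathfrak{h}^{*}$, the $c_i$ are not all zero, so the solution set is a genuine affine hyperplane. The point of the construction is that, evaluated via the inverse Cartan matrix of $E_7$, this relation takes the form ``(a subset of the $a_i$) $=$ (a complementary subset of the $a_i$) $+$ constant'', with the coefficients $c_i$ \emph{not all of the same sign} and with several $a_i$ left free; consequently the hyperplane contains points all of whose coordinates are arbitrarily large positive integers.

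Finally, for any such $\nu$ with the $a_i$ chosen large and satisfying the relation, both $\nu+\lambda_1$ and $\nu+\lambda_2$ lie in $\mathcal{P}_{+}$ (large $a_i$ absorb the small corrections $\lambda_1,\lambda_2$), so by the minuscule decomposition both $V_{\nu+\lambda_1}$ and $V_{\nu+\lambda_2}$ occur, each with multiplicity one, in $V_{\omega_7}\otimes V_\nu$; and by the choice of $\nu$ they share the same $M_{\omega_7,\nu}(C)$--eigenvalue. This proves the proposition.

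The only real work is the bookkeeping in the middle paragraph: exhibiting a concrete pair $\lambda_1,\lambda_2\in\Pi(V_{\omega_7})$ whose difference pairs against the seven fundamental weights with coefficients of mixed sign, so that the solution hyperplane genuinely meets the positive orthant in arbitrarily large integral points. Everything else --- the eigenvalue formula, the minuscule decomposition, and dominance of $\nu+\lambda_i$ for large $a_i$ --- is immediate from the results already assembled, exactly as in the $E_6$ case.
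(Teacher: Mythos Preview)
Your overall strategy is exactly the paper's: use minusculeness of $\omega_7$ to get a multiplicity-free decomposition, identify the eigenvalue with $\tfrac12(\chi_{\nu+\mu}(C)-\chi_{\omega_7}(C)-\chi_\nu(C))$, and reduce the question to the linear equation $(2\nu+\lambda_1+\lambda_2+2\delta,\lambda_1-\lambda_2)=0$. The problem is your concrete choice of weights. Your $\lambda_1=\omega_5-\omega_6$ is obtained from $\omega_7$ by subtracting $\alpha_7+\alpha_6$, and your $\lambda_2=\omega_3-\omega_2$ by further subtracting $\alpha_5+\alpha_4+\alpha_2$; hence
\[
\lambda_1-\lambda_2=\alpha_2+\alpha_4+\alpha_5
\]
is a sum of \emph{positive} simple roots, i.e.\ $\lambda_1>\lambda_2$. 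Since $(\omega_i,\alpha_j)\ge 0$ for all $i,j$, every coefficient $c_i=2(\omega_i,\lambda_1-\lambda_2)$ is nonnegative (indeed $c_2=c_4=c_5=2$ and the rest vanish), contradicting your ``mixed sign'' assertion. Worse, $c_0=-(\lambda_1+\lambda_2+2\delta,\lambda_1-\lambda_2)=-(2\delta,\alpha_2+\alpha_4+\alpha_5)=-6$, so the equation reads $a_2+a_4+a_5=-3$, which has no solutions in nonnegative integers at all. This is precisely the obstruction in Proposition~\ref{smf_has_distinct_eigs}: whenever $\lambda_1$ and $\lambda_2$ are comparable, the eigenvalues are \emph{always} distinct for dominant $\nu+\lambda_i$.

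The fix is simply to pick an \emph{incomparable} pair of weights of $V_{\omega_7}$, so that $\lambda_1-\lambda_2$ has simple-root coefficients of both signs; then the $c_i$ genuinely change sign and the hyperplane meets the positive orthant in arbitrarily large integer points. The paper does exactly this, taking $\lambda_1=\omega_1-\omega_7$ and $\lambda_2=-\omega_2+\omega_6$, and computes the resulting relation explicitly as $a_1+a_2+a_3+a_4+2=a_6+a_7$. Once you replace your pair by an incomparable one and carry out the (short) explicit computation, your argument goes through verbatim.
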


	\begin{proof}
		Write $\nu = a_{1}\omega_{1} + \dots + a_{7}\omega_{7}$. 
		From [KW], one knows that $\lambda_{1} = \omega_{1} - \omega_{7}$ and $\lambda_{2} = - \omega_{2} + \omega_{6} \in \Pi(V_{\omega_{7}})$. 
		Define $F(\mu) = \chi_{\nu + \mu}(C) - \chi_{\nu}(C)$ for each $\mu \in \Pi(V_{\omega_{7}})$. Then we have
		$$
		F(\lambda_{1}) = 2a_{1} + a_{2} + 2a_{3} + 2a_{4} + a_{5} - a_{7} + (7 + \frac{3}{2}), $$
		$$
		F(\lambda_{2}) = -a_{2} + a_{5} + 2a_{6} + a_{7} + (3 + \frac{3}{2}).
		$$
		Hence $F(\lambda_{1}) = F(\lambda_{2})$ if and only if $a_{1} + a_{2} + a_{3} + a_{4} + 2 = a_{6} + a_{7}$. 
		In other words, if the positive integers $a_{i} (i = 1, \dots, 7) $
		 are sufficiently large such that $\nu + \lambda_{1}, \nu + \lambda_{2} \in \mathcal{P}_{+}$ and they		
		 satisfy the equation $a_{1} + a_{2} + a_{3} + a_{4} + 2 = a_{6} + a_{7}$, then the  irreducible $E_{7}$-modules $V_{\nu + \lambda_{1}}$ and $V_{\nu + \lambda_{2}}$ occurring in $V_{\omega_{7}} \otimes V_{\nu}$ have the same $M_{\omega_{7}, \nu}(C)$-eigenvalues.
	\end{proof}

	\begin{prop}
		For  $E_{7}$-module $V_{\omega_{7}}$,  we have $2\text{ht}(\lambda) + 1 \neq \dim V_{\lambda}$.
	\end{prop}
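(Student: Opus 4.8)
The plan is simply to compute both sides of the claimed equality for the one relevant pair and observe that they disagree, just as was done for $D_l$ and $E_6$; here there is only the single case $\lambda = \omega_7$, since $V_{\omega_7}$ is the only multiplicity free $E_7$-module.

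First I would record that $\dim V_{\omega_{7}} = 56$ by Weyl's dimension formula. Next, using the inverse of the Cartan matrix of $E_7$ in the Bourbaki labelling (as listed in [Hu], and exactly as in the $E_6$ computation just above), I would expand $\omega_7$ in the basis of simple roots. Solving the linear system $A c = e_7$, with $A$ the Cartan matrix of $E_7$, gives
$$\omega_7 = \alpha_1 + \frac{3}{2}\alpha_2 + 2\alpha_3 + 3\alpha_4 + \frac{5}{2}\alpha_5 + 2\alpha_6 + \frac{3}{2}\alpha_7 = \frac{1}{2}\left(2\alpha_1 + 3\alpha_2 + 4\alpha_3 + 6\alpha_4 + 5\alpha_5 + 4\alpha_6 + 3\alpha_7\right),$$
the half-integer coefficients merely reflecting that $\omega_7$ is not in the root lattice of $E_7$. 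Summing the coefficients gives $\text{ht}(\omega_7) = \frac{27}{2}$, hence
$$2\,\text{ht}(\omega_7) + 1 = 28 \neq 56 = \dim V_{\omega_{7}},$$
which is the assertion.

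No real obstacle arises — the only step requiring care is the linear-algebra computation of $\omega_7$ in the root basis. As a safeguard I would cross-check $\text{ht}(\omega_7)$ using the identity $2\,\text{ht}(\omega_7) = \omega_7(h^{0})$ from Section 4, and against the minuscule picture: $V_{\omega_{7}}$ is self-dual, so the floors of its weight poset number $2\,\text{ht}(\omega_7) + 1 = 28$, far fewer than $\dim V_{\omega_{7}} = 56$, confirming that $V_{\omega_{7}}$ is not irreducible over the principal $\mathfrak{a}^{0}$. This also completes the case-by-case verification, as $F_4$ and $E_8$ admit no multiplicity free modules.
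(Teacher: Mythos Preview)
Your proof is correct and follows essentially the same approach as the paper: compute $\omega_7$ in the simple-root basis via the inverse Cartan matrix to obtain $\text{ht}(\omega_7) = \tfrac{27}{2}$, then compare $2\,\text{ht}(\omega_7)+1 = 28$ with $\dim V_{\omega_7} = 56$. The additional cross-checks you propose (via $\omega_7(h^{0})$ and the floor count) are sound but not in the paper's proof.
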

	
	\begin{proof}
		By means of  the Cartan matrix listed in [Hu], we have 
		$$\omega_{7} = \frac{1}{2}(2\alpha_{1} + 3\alpha_{2} + 4\alpha_{3} + 6\alpha_{4} + 5\alpha_{5} + 4\alpha_{6} + 3\alpha_{7}).$$ 
		Then $\text{ht}(\omega_{7}) = \frac{27}{2}$. 
		Since $ \dim  V_{\omega_{7}}=56$, we get $2 \text{ht}(\lambda) + 1 < \dim V_{\lambda}$.
	\end{proof}

	\appendix
	\section{Eigenvalues of second order Casimir operators for classical simple Lie algebras}

	The formulas of the eigenvalues of Casimir operators of arbitrary order for all classical Lie algebras are calculated in [Po2].
	In this appendix, we recall some formulas of the  eigenvalues of second order Casimir operators for classical Lie algebras.

	Let $\mathfrak{g}$ be the classical simple Lie algebra and $(\zeta) = (\zeta_{i})_{i \in I}$ be the highest weight of irreducible $\mathfrak{g}$-module $V_{\zeta}$, where coordinates $\zeta_{i}$ and the index set $I$ in the case of classical Lie algebras are determined explicitly by the corresponding  Young tableau of $\zeta$.
	Popov gave the expressions of $\zeta_{i}$ and $I$ in [Po2] and we will present these results in each type of $\mathfrak{g}$ below.

	Define the power sum of order $2$ by
	\begin{align}
		S(\zeta) = \sum_{i\in I} [(\zeta_{i} + r_{i} + \alpha)^{2} - (r_{i} + \alpha)^{2}],
	\end{align}
	where $\alpha$ and the index set $I$ for every classical Lie algebra are listed in Table 1
	\begin{table}[h]
		\caption{}
		\begin{center}
			\begin{tabular}{| c | c | c | c |}
				\hline
				classical type & \multicolumn{1}{c|}{notation} & \multicolumn{1}{c|}{$\alpha$} & \multicolumn{1}{c|}{$I$} \\ \hline
				$A_{l}$ & $\mathfrak{sl}_{l+1}(\mathbb{C})$ & $\frac{l}{2}$ & $1, 2, \dots, l+1$ \\ \hline
				$B_{l}$ & $\mathfrak{so}_{2l+1}(\mathbb{C})$ & $l - \frac{1}{2}$ & $0, \pm 1, \dots, \pm l$ \\ \hline
				$C_{l}$ & $\mathfrak{sp}_{2l}(\mathbb{C})$ & $l$ & $\pm 1, \pm 2, \dots, \pm l$ \\ \hline
				$D_{l}$ & $\mathfrak{so}_{2l}(\mathbb{C})$ & $l-1$ & $\pm 1, \pm 2, \dots, \pm l$ \\ \hline
			\end{tabular}\\
		\end{center}
	\end{table}
   and
	\begin{equation} \label{r_{i}}
		r_{i} = (\alpha + 1)\varepsilon_{i} - i, \quad \varepsilon_{i} = 
		\begin{cases}
			1 & \text{ if } \quad i > 0 \\
			0 & \text{ if } \quad i = 0 \\
			-1 & \text{ if } \quad i < 0
		\end{cases}.
	\end{equation}
	The components $\zeta_{i}$ and $r_{j}$ have the anti-symmetry property
	\begin{align}
		\zeta_{-i} = -\zeta_{i}, \quad r_{-i} = -r_{i}, \quad \zeta_{0} = r_{0} = 0,
	\end{align}
	by virtue of which only components with $i > 0$ are independent.

	The eigenvalue of second order Casimir operator $C$ on $V_{\zeta}$ is given by the corresponding  infinitesimal character on $C$.
	It follows from [Po2] that 
	\begin{align}\label{Z1}
		\chi_{\zeta}(C) = S(\zeta).
	\end{align}

	For $A_{l}$, write the highest weight $\zeta = a_{1}\omega_{1} + \dots + a_{l}\omega_{l}$. 
	Let $f_{i}(\zeta) =\sum\limits_{k=i}^{l} a_{k},  i = 1, 2, \dots, l$.
	The Young tableau $[f(\zeta)] = [f_{1}(\zeta), \dots, f_{l}(\zeta), f_{l+1}(\zeta)]$ is given by $f_{i}(\zeta)$, which is the number of cells in the row $i$ of $[f(\zeta)]$ and $f_{1}(\zeta) \geq \dots \geq f_{l}(\zeta) \geq f_{l+1}(\zeta) = 0$.
	From [Po1], one knows that the eigenvalue $\chi_{\zeta}(C)$  of the second order Casimir element $C$ acting on irreducible $A_{l}$-module $V_{\zeta}$ is of the form:
	\begin{align} \label{C_{2} of A_{l}}
		\chi_{\zeta}(C) = \sum_{i = 1}^{l+1} [(\zeta_{i} - i + l + 1)^{2} - (l + 1 - i)^{2}],
	\end{align}
	where 
	$$\zeta_{i} = f_{i}(\zeta) - \frac{1}{l+1} \sum_{j = 1}^{l+1}f_{j}(\zeta), \quad i = 1, \dots, l+1.$$
	
	\medskip

	For $C_{l}$, write the highest weight $\zeta = a_{1}\omega_{1} + \dots + a_{l}\omega_{l}$.
	The Young tableau $[f(\zeta)] = [f_{1}(\zeta), f_{2}(\zeta), \dots, f_{l}(\zeta)]$ is given by $f_{i}(\zeta) =\sum\limits_{k=i}^{l} a_{k},  i = 1, 2, \dots, l$.
	From [Po2], the eigenvalue $\chi_{\zeta}(C)$ of the second order Casimir element $C$ acting on irreducible $C_{l}$-module $V_{\zeta}$ is of the form:
	\begin{align} \label{C_{2} of C_{l}}
		\chi_{\zeta}(C) = \sum_{i = 1}^{l} [(\zeta_{i} - i + 2l + 1)^{2} + (\zeta_{i} - i + 1)^{2}] - \sum_{i = 1}^{l} [(r_{i} + \alpha)^{2} + (r_{-i} + \alpha)^{2}],
	\end{align}
	where $\alpha = l$ in Table 1 and 
	$$\zeta_{i} = f_i(\zeta), i = 1, \dots, l.$$
	
	\medskip

	For $B_{l}$, write the highest weight $\zeta = a_{1}\omega_{1} + \dots + a_{l}\omega_{l}$.
	From [Po2], one knows that the $C$-eigenvalue of irreducible $B_{l}$-module $V_{\zeta}$ is of  the form:
	\begin{align}\label{C_{2} of B_{l}}
		\chi_{\zeta}(C) = \alpha^{2} + \sum_{i = 1}^{l} [(\zeta_{i} - i + 2l)^{2} + (\zeta_{i} - i + 1)^{2}] - \sum_{i = -l}^{l} (r_{i} + \alpha)^{2},
	\end{align}
	where $\alpha = l - \frac{1}{2}$ and 		
	\begin{equation} \label{B_{l}}
		\begin{split}
			\zeta_{i} = a_{i} + a_{i+1} + \dots + &a_{l-1} + \frac{1}{2}a_{l}, \quad i = 1,2, \dots, l-1, \\ 
			&\zeta_{l} = \frac{1}{2}a_{l}.
		\end{split}
	\end{equation}
	
	\medskip

	For $D_{l}$, write the highest weight $\zeta = a_{1}\omega_{1} + \dots + a_{l}\omega_{l}$. 
	There are two inequivalent spin $\mathfrak{g}$-modules $U^{+}$ and $U^{-}$.
	If $l$ is even, then $U^{+} = V_{\omega_{l}}$ and $U^{-} = V_{\omega_{l-1}}$. 
	Denote
	\begin{equation} \label{D_{l} even}
		\begin{split}
			\zeta_{i} = a_{i} + &a_{i+1} + \dots + a_{l-2} + \frac{1}{2}(a_{l-1} + a_{l}), \quad i = 1,2, \dots, l-2, \\
			&\zeta_{l-1} = \frac{1}{2}(a_{l-1} + a_{l}), \quad \zeta_{l} = \frac{1}{2}(a_{l} - a_{l-1}).
		\end{split}
	\end{equation}
	If $l$ is odd, then $U^{+} = V_{\omega_{l-1}}$ and $U^{-} = V_{\omega_{l}}$. 
	Denote
	\begin{equation} \label{D_{l} odd}
		\begin{split}
			\zeta_{i} = a_{i} + &a_{i+1} + \dots + a_{l-2} + \frac{1}{2}(a_{l-1} + a_{l}), \quad i = 1,2, \dots, l-2, \\
			&\zeta_{l-1} = \frac{1}{2}(a_{l-1} + a_{l}), \quad \zeta_{l} = \frac{1}{2}(a_{l-1} - a_{l}).
		\end{split}
	\end{equation}
	By [Po2], one knows that the $C$-eigenvalue of irreducible $D_{l}$-module $V_{\zeta}$ is of the form:
	\begin{align} \label{C_{2} of D_{l}}
		\chi_{\zeta}(C) = \sum_{i = 1}^{l} [(\zeta_{i} - i + 2l - 1)^{2} + (\zeta_{i} - i + 1)^{2}] - \sum_{i = 1}^{l} [(r_{i} + \alpha)^{2} + (r_{-i} + \alpha)^{2}],
	\end{align}
	where $\alpha = l-1$ and $\zeta_{i}, i = 1, \dots, l$ satisfy $(A.9)$ for $l$ even and $(A.10)$ for $l$ odd.

	\section{M-type matrices for complex simple Lie algebras}
	
	In this appendix, we give the M-type matrices $M_{\lambda}(C)$ with strongly multiplicity free weights for complex simple Lie algebras.
	
	\subsection{$A_{l}$}

	For $A_{l}$, let $E_{i, j}$ be the $(l+1) \times (l+1)$ matrix whose element on row $i$ and column $j$ is $1$ and others are $0$ and $I_{l+1}$ be the identity matrix.
	Set
	$$X_{i, i} = E_{i, i} - \frac{1}{l+1} I_{l+1}, \ 1 \leq i \leq l+1,$$
	$$X_{i, j} = E_{i, j}, \ 1 \leq i \ne j \leq l+1.$$
	It is obvious that $\{X_{i, j}, X_{k, k}| 1 \leq i \ne j \leq l+1, 1 \leq k \leq l \}$ is a set of the basis for $A_{l}$.
	Based on the Killing form $B = \tr_{\omega_1}$ for $A_{l}$, we have the dual elements 
	$$X_{i, i}^{*} = X_{i,i}-X_{l+1,l+1}, \ 1 \leq i \leq l,$$
	$$X_{i ,j}^{*} = X_{j, i}, \ 1 \leq i \ne j \leq l+1.$$
	
	For the natural module $V_{\omega_1}$, one knows that
	$$\pi_{\omega_1}(X_{i,i})=E_{i,i}-\frac{1}{l+1} I_{l+1}, \ 1 \leq i  \leq l,$$
	$$\pi_{\omega_1}(X_{i,j})=E_{i,j}, \ 1 \leq i \neq j \leq l+1.$$
	Then we obtain
	\begin{equation} \label{M_{omega_1}(C) for A_{l}}
		M_{\omega_1}(C) = \sum\limits_{1 \leq i \ne j \leq l+1} \pi_{\omega_1}(X_{i, j}) \otimes X_{i, j}^{*} + \sum_{1 \leq i \leq l} \pi_{\omega_1}(X_{i, i}) \otimes X_{i, i}^{*}
	\end{equation}
	$$=\sum\limits_{1 \leq i\neq j \leq l+1} E_{i,j}\otimes X_{j,i} + \sum\limits_{1 \leq i \leq l}(E_{i,i}-\frac{1}{l+1}I_{l+1}) \otimes (X_{i,i}-X_{l+1,l+1})$$
	$$=\sum\limits_{1 \leq i, j \leq l+1}E_{i,j}\otimes X_{j,i}.$$
	
	Since $V_{\omega_{l}}$ is the dual module of $V_{\omega_{1}}$ for $A_{l}$, the corresponding M-type matrix $M_{\omega_{l}}(C)$ is also of the form \eqref{M_{omega_1}(C) for A_{l}}.
	
	For irreducible $A_{1}$-module $V_{k \omega_{1}} (k \in \mathbb{N})$, M-type matrix $M_{k \omega_{1}}(C)$ is obtained by Kirillov [Ki1].

	\subsection{$B_{l}$}
	
	For $B_{l}$, let $E_{i, j}$ be the $(2l+1) \times (2l+1)$ matrix whose element on row $i$ and column $j$ is $1$ and others are $0$.
	Set
	$$X_{i, j} = E_{i, j} - E_{j, i}, \ 1 \leq i \ne j \leq 2l+1.$$
	It is obvious that $\{X_{i, j} | 1 \leq i < j \leq 2l+1 \}$ is a set of the basis for $B_{l}$.
	Based on the Killing form $B = \tr_{\omega_1}$ for $B_{l}$, we have the dual elements
	$$ X_{i,j}^{*}=-\frac{1}{2}X_{i,j}, \ 1 \leq i \neq j  \leq 2l+1. $$
	
	For the natural module $V_{\omega_1}$, one knows that 
	$$ \pi_{\omega_1}(X_{i,j})=E_{i,j}-E_{j,i}$$
	for $1 \leq i \ne j \leq 2l+1.$
	Then we obtain
	\begin{equation}
		M_{\omega_1}(C) = \sum\limits_{1 \leq i < j \leq 2l+1}\pi_{\omega_1}(X_{i, j})\otimes X_{i, j}^{*} = -\frac{1}{2}\sum\limits_{1 \leq i< j \leq 2l+1}(E_{i,j}-E_{j,i}) \otimes X_{i,j}.
	\end{equation}

	\subsection{$C_{l}$}
	
	For $C_{l}$, let the indices $i,j $ run from $-l$ to $l$ excluding zero and $E_{i, j}$ be the $2l \times 2l$ matrix whose element on row $k$ and column $m$ is $1$ and others are $0$, where positive integers $k \equiv i (\text{mod } l)$ and $m \equiv j (\text{mod } l)$.
	Define $\epsilon_i = 1$ for $1 \leq i \leq l$  and $\epsilon_i = -1$ for $-l \leq i \leq -1$ and set
	$$X_{i, j} = E_{i,j}-\epsilon_i \epsilon_j E_{-j,-i}$$
	for $-l \leq i, j \leq l.$
	They satisfy the symmetry:
	$$X_{i, j}=-\epsilon_i \epsilon_j X_{-j,-i}.$$
	It is obvious that $\{X_{k, m}, X_{i, -j}, X_{-i, j} | 1 \leq k, m \leq l, 1 \leq i \leq j \leq l \}$ is a set of the basis for $C_{l}$.
	Based on the Killing form $B = \tr_{\omega_1}$ for $C_{l}$, we have the dual elements 
	$$X_{k, m}^{*} = \frac{1}{2} X_{m, k}, \ 1 \leq k, m \leq l,$$
	$$X_{i, -j}^{*} = \frac{1}{2} X_{-i, j}, \ 1 \leq i \ne j \leq l,$$
	$$X_{i, -i}^{*} = \frac{1}{4} X_{-i, i}, \ 1 \leq i \leq l.$$
	
	For the natural module $V_{\omega_1}$, one knows that  
	$$  \pi_{\omega_{1}}(X_{i,j})=E_{i,j}-\epsilon_i \epsilon_j E_{-j,-i}$$
	for $-l \leq i, j \leq l$.
	Then we obtain
	\begin{equation}
		M_{\omega_1}(C) = \sum\limits_{1 \leq i, j \leq l} \pi_{\omega_1}(X_{i, j}) \otimes X_{i, j}^{*} 
	\end{equation}
	$$+ \sum_{1 \leq i \leq j \leq l} [\pi_{\omega_1}(X_{i, -j}) \otimes X_{i, -j}^{*} + \pi_{\omega_1}(X_{-i, j}) \otimes X_{-i, j}^{*} ]$$
	$$
	= \frac{1}{2}[\sum\limits_{l \leq i,j \leq l}(E_{i,j}-E_{-j,-i})\otimes X_{j,i}
	+ \sum\limits_{ 1 \leq i < j \leq l}(E_{i,-j}+E_{j,-i})\otimes X_{-i,j}
	$$
	$$
	+ \sum\limits_{1 \leq i < j \leq l}(E_{-i,j}+E_{-j,i})\otimes X_{i,-j}
	+ \sum\limits_{ 1 \leq i \leq l}E_{i,-i}\otimes X_{-i,i}
	+ \sum\limits_{ 1 \leq i \leq l}E_{-i,i}\otimes X_{i,-i}].
	$$
	It can be written in a block form:
	\begin{equation}
		M_{\omega_1}(C) = 
		\begin{pmatrix}
			A & B      \\
			D & -A^{T}
		\end{pmatrix},
	\end{equation}
	where
	$$ A = (-\frac{1}{2}X_{j, i})_{1 \leq i, j \leq l}, \ B = (\frac{1}{2}X_{-i,j})_{1 \leq i, j \leq l} = B^{T}, \ D = (\frac{1}{2}X_{i,-j})_{1 \leq i, j \leq l} = D^{T}.$$

	\subsection{$G_{2}$}
	
	For exceptional Lie algebra $G_{2}$, let $E_{i, j}$ be the $7 \times 7$ matrix whose element on row $i$ and column $j$ is $1$ and others are $0$ and matrix $A^{T}$ be the transpose of matrix $A$.
	Since exceptional Lie algebra $G_{2}$ is realized by the subalgebra of classical simple Lie algebra $B_{3}$, $\phi = \{ \pm\lambda_{i}, \lambda_{i} - \lambda_{j} | 1 \leq i \ne j \leq 3 \}$ is the root system of $G_{2}$, where $\lambda_{i}$ satisfies $\lambda_{i}(\text{diag}(0, x_{1}, x_{2}, x_{3}, -x_{1}, $ $-x_{2}, -x_{3})) = x_{i}$ for $x_{i} \in \mathbb{C}$.
	Then we have the Cartan decomposition 
	$$G_{2} = \mathfrak{h}_{0} + \sum\limits_{\alpha \in \Phi} \mathbb{C}G_{\alpha},$$
	where
	$$\mathfrak{h}_{0} = \{ \text{diag}(0, x_{1}, x_{2}, x_{3}, -x_{1}, -x_{2}, -x_{3}) | \sum\limits_{ i = 1}^{3} x_{i} = 0, x_{i} \in \mathbb{C} \},$$
	$$G_{\lambda_{1} - \lambda_{2}} = G_{\lambda_{2} - \lambda_{1}}^{T} = (E_{23} - E_{65})/ \sqrt{2},$$
	$$G_{\lambda_{1} - \lambda_{3}} = G_{\lambda_{3} - \lambda_{1}}^{T} = (E_{24} - E_{75})/ \sqrt{2},$$
	$$G_{\lambda_{2} - \lambda_{3}} = G_{\lambda_{3} - \lambda_{2}}^{T} = (E_{34} - E_{76})/ \sqrt{2},$$
	$$G_{\lambda_{1}} = -G_{-\lambda_{1}}^{T} = (\sqrt{2}(E_{12} - E_{51}) - (E_{37} - E_{46}))/ \sqrt{-6},$$
	$$G_{\lambda_{2}} = -G_{-\lambda_{2}}^{T} = (\sqrt{2}(E_{13} - E_{61}) - (E_{27} - E_{45}))/ \sqrt{-6},$$
	$$G_{\lambda_{3}} = -G_{-\lambda_{3}}^{T} = (\sqrt{2}(E_{14} - E_{71}) - (E_{26} - E_{35}))/ \sqrt{-6}.$$
	Let 
	$$ h_{1} = \text{diag}(0, 0, \frac{1}{2}, -\frac{1}{2}, 0, -\frac{1}{2}, \frac{1}{2}), $$
	$$ h_{2} = \text{diag}(0, \frac{1}{2}, 0,  -\frac{1}{2}, -\frac{1}{2}, 0, \frac{1}{2}).$$
	Based on the Killing form $B = \tr_{\omega_1}$ for $G_{2}$, we have the dual elements
	$$h_{i}^{*} = h_{i}, \quad i = 1,2,$$
	$$G_{\alpha}^{*} = G_{-\alpha}, \quad \alpha \in \phi.$$

	For the natural module $V_{\omega_1}$, one knows that
	$$\pi_{\omega_1}(h_{i}) = h_{i}, \quad i = 1, 2,$$
	$$\pi_{\omega_1}(G_{\alpha}) = G_{\alpha}, \quad \alpha \in \phi.$$
	Then we have
	\begin{equation}
		M_{\omega_1}(C)=\pi_{\omega_1}(h_{1}) \otimes h_{1}^{*} + \pi_{\omega_1}(h_{2}) \otimes h_{2}^{*} + \sum\limits_{\alpha \in \phi} \pi_{\omega_1}(G_{\alpha})\otimes G_{\alpha}^{*}
	\end{equation}
	$$= h_{1} \otimes h_{1} + h_{2} \otimes h_{2} + \sum\limits_{ 1 \leq i < j \leq 3 } (G_{\lambda_{i} - \lambda_{j}} \otimes G_{\lambda_{j} - \lambda_{i}} + G_{\lambda_{j} - \lambda_{i}} \otimes G_{\lambda_{i} - \lambda_{j}}) $$
	$$ + \sum\limits_{ 1 \leq i \leq 3 }( G_{\lambda_{i}} \otimes G_{\lambda_{-i}} +  G_{\lambda_{-i}} \otimes G_{\lambda_{i}}).$$

\end{document}